\newtheorem{Teo}{Theorem}[section]
\newtheorem{Prop}[Teo]{Proposition}
\newtheorem{Lema}[Teo]{Lemma}
\newtheorem{Cor}[Teo]{Corollary}
\theoremstyle{definition}
\newtheorem{Def}[Teo]{Definition}
\newtheorem{Obs}[Teo]{Remark}
\newcommand{\Q}{\mathbb{Q}}
\newcommand{\Z}{\mathbb{Z}}
\newcommand{\N}{\mathbb{N}}
\newcommand{\lra}{\longrightarrow}
\newcommand{\Lra}{\Longrightarrow}
\newcommand{\VR}{\mathcal{O}}
\newcommand{\MI}{\mathfrak{m}}
\newcommand{\SU}{\mbox{\rm supp}}
\newcommand{\K}{\mathbb{K}}
\DeclareMathOperator{\inv}{in}
\begin{document}
\title{On truncations of valuations}
\author{Novacoski, J. A. and Silva de Souza, C. H.}
\thanks{During the realization of this project the authors were supported by grants from Funda\c c\~ao de Amparo \`a Pesquisa do Estado de S\~ao Paulo (process numbers 2017/17835-9 and 2020/05148-0).}

\begin{abstract}
In this paper we study the truncation $\nu_q$ of a valuation $\nu$ on a polynomial $q$. It is known that when $q$ is a key polynomial, then $\nu_q$ is a valuation. It is also known that the converse does not hold. We show that when $q$ is a key polynomial, then $\nu_q$ is the restriction of the truncation given by an optimizing root of $q$. We also discuss which conditions assure that $\nu_q=\nu$. Finally, we assume that $\nu_q$ is a valuation and present some conditions, given in terms of the graded algebra, to assure that $q$ is a key polynomial.
\end{abstract}

\keywords{Key polynomials, graded algebras, minimal pairs, truncations of valuations}
\subjclass[2010]{Primary 13A18}

\maketitle
\section{Introduction}
Fix a valuation  $\nu$ in $\K[x]$, the ring of polynomials on one indeterminate over the field $\K$. Let $q\in \K[x]$ be a non-constant polynomial. Then there exist, uniquely determined, polynomials $f_0,\ldots, f_s\in \K[x]$ with $\deg(f_i)<\deg(q)$ for every $i$, $0\leq i\leq s$, such that
$$f = f_0+f_1q+\ldots +f_sq^s.$$
We call this expression the $q$-\textbf{expansion} of $f$. Hence, we can construct a map given by
$$\nu_q(f):=\underset{0\leq i\leq s}{\min} \{ \nu(f_iq^i)  \},$$
and call it the \textbf{truncation} of $\nu$ at $q$. This map is not always a valuation, as we can see in Example 2.4 of \cite{josneiKeyPolyPropriedades}. If $Q$ is a key polynomial, then $\nu_Q$ is a valuation in $\K[x]$ (see Proposition 2.6 of \cite{josneiKeyPolyMinimalPairs}). 

In this paper we extend  some results of  \cite{josneiKeyPolyPropriedades}, \cite{josneiKeyPolyMinimalPairs}  and \cite{josneimonomial} about valuations given by truncations, key polynomials, minimal pairs, optimizing roots and valuation-transcendental valuations. 

The first result, that we prove in Section~\ref{SecResultado1}, deals with an equality of valuations given by truncations on different rings. Fix an algebraic closure $\overline{\K}$ of $\K$, a Krull valuation $\nu$ on $\K[x]$ and an extension $\mu$ of $\nu$ to $\overline{\K}[x]$. Take a key polynomial $Q$ for $\nu$ and an optimizing root $a$ of $Q$ (i.e., a root $a$ of $Q$ for which $\mu(x-a)$ is maximum). Our first result (Theorem \ref{teorandretromeno}) says that for every $f\in\K[x]$ we have
\[
\nu_Q(f)=\mu_{x-a}(f).
\]

This result is not new and was proved, in a similar context, in \cite{popescu2} and \cite{popescuSurLaDefinition}. Our proof follows their step and is an adaptation to our definition of key polynomials. In the process, we present some intermediate results that are important on their own, as well as alternative simpler proofs. Also, recently Bengus-Lasnier presented a proof of this result in \cite{Andrei}. He uses the structure of the graded algebra of a valuation, while our proof only deals with basic properties of valuations. In Section \ref{gradegalh} we present a brief comparison of our proof and the one by Bengus-Lasnier.

The second main result is presented in Section~\ref{SecResultado2}. It is a generalization of Theorem 1.3 of \cite{josneiKeyPolyMinimalPairs}. Let $\nu$ be a valuation on $\K[x]$. Then $\nu$ is called \textbf{value-transcendental} if it is not Krull or if the quotient group $\nu\K(x)/\nu\K$ is not a torsion group. On the other hand, we say that $\nu$ is \textbf{residue-transcendental} if the field extension $\K(x)\nu\mid \K\nu$ is transcendental. By  the Abhyankar inequality, we see that a valuation cannot be  value-transcendental and residue-transcendental at the same time. A valuation that is of one of the previous types is called \textbf{valuation-transcendental}. 

Our second main result is the following.
\begin{Teo}\label{teoValTransiffTruncamento}
A valuation $\nu$ is valuation-transcendental if and only if there exists a key polynomial $Q$ such that $\nu=\nu_Q$.
\end{Teo}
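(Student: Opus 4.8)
The plan is to deduce both implications from Theorem~\ref{teorandretromeno}. The relevant consequence is that, for a key polynomial $Q$, an optimizing root $a$ of $Q$ and an extension $\mu$ of $\nu$ to $\overline{\K}[x]$, one has $\nu_Q=\mu_{x-a}|_{\K[x]}$; and since $\deg(x-a)=1$, the $(x-a)$-expansion of any $g\in\overline{\K}[x]$ has constant coefficients, so
\[
\mu_{x-a}(g)=\min_i\bigl(\mu(c_i)+i\gamma\bigr),\qquad g=\sum_i c_i(x-a)^i,\quad \gamma:=\mu(x-a).
\]
Thus $\mu_{x-a}$ is the monomial valuation on $\overline{\K}[x]$ attached to $(a,\gamma)$, and in both directions we are comparing $\nu$ with such a monomial valuation; the dichotomy will be whether $\gamma$ is torsion modulo $\mu\overline{\K}$ or not.

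For the implication from a key polynomial to transcendence, suppose $\nu=\nu_Q$. If $\nu$ is not Krull then it is value-transcendental by definition, so assume $\nu$ is Krull and fix $\mu$, $a$, $\gamma$ as above, so $\nu=\mu_{x-a}|_{\K[x]}$. Assume first that $\gamma$ is non-torsion modulo $\mu\overline{\K}$. Then the value group $\mu_{x-a}\overline{\K}(x)=\mu\overline{\K}+\Z\gamma$ is non-torsion over $\mu\overline{\K}$, hence over $\nu\K$ (since $\overline{\K}/\K$ is algebraic, $\mu\overline{\K}/\nu\K$ is torsion); since $\overline{\K}(x)/\K(x)$ is algebraic, $\mu_{x-a}\overline{\K}(x)/\nu\K(x)$ is torsion, and an elementary group-theoretic argument then shows $\nu\K(x)/\nu\K$ is non-torsion, i.e.\ $\nu$ is value-transcendental. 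Assume now that $\gamma$ is torsion modulo $\mu\overline{\K}$. Then $\overline{\K}(x)\mu_{x-a}$ has transcendence degree $1$ over $\overline{\K}\mu$, hence over $\K\nu$; additivity of transcendence degree in the tower $\K\nu\subseteq\K(x)\nu\subseteq\overline{\K}(x)\mu_{x-a}$, together with the algebraicity of $\overline{\K}(x)\mu_{x-a}/\K(x)\nu$, forces $\K(x)\nu/\K\nu$ to be transcendental, i.e.\ $\nu$ is residue-transcendental. In either case $\nu$ is valuation-transcendental.

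For the converse, assume $\nu$ is valuation-transcendental. The crux is that such a $\nu$ admits a \emph{minimal pair}: there exist $a\in\overline{\K}$ of least possible degree over $\K$ and an extension $\mu$ of $\nu$ to $\overline{\K}[x]$ with $\gamma:=\mu(x-a)$ maximal among the values $\mu(x-b)$, $b\in\overline{\K}$, and such that $\nu=\mu_{x-a}|_{\K[x]}$; in other words, $\nu$ is a monomial valuation in the sense of \cite{josneimonomial}. The transcendence hypothesis is exactly what forces the relevant supremum to be attained (for a genuine limit valuation it is not), the value-transcendental case being treated by arranging $\gamma$ non-torsion modulo $\nu\K$ and the residue-transcendental case by arranging $\gamma$ torsion modulo $\nu\K$ with transcendental associated residue; alternatively one may first exhibit $Q\in\K[x]$ monic of minimal degree whose value is non-torsion modulo $\nu\K$, respectively whose suitably normalized residue is transcendental over $\K\nu$, and check that minimality of its degree already forces $\nu=\nu_Q$. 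Once the minimal pair is in hand, the minimal polynomial $Q$ of $a$ over $\K$ is a key polynomial for $\nu$ with $a$ as an optimizing root (the minimal-pair/key-polynomial correspondence of \cite{josneiKeyPolyMinimalPairs}), and Theorem~\ref{teorandretromeno} yields $\nu_Q=\mu_{x-a}|_{\K[x]}=\nu$.

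The main obstacle I anticipate is the construction of the minimal pair — equivalently, the proof that a valuation-transcendental valuation is monomial — since this is where the two transcendence hypotheses are genuinely used and where one must exclude $\nu$ being a proper limit of its truncations. The remaining points (that the minimal polynomial of the optimizing element is a key polynomial, and the value-group and residue-field bookkeeping in the first implication) are routine by comparison, and should follow from results recalled from \cite{josneiKeyPolyMinimalPairs} and \cite{josneimonomial}.
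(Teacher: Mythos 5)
Your treatment of the direction ``$\nu=\nu_Q$ for a key polynomial $Q$ $\Rightarrow$ $\nu$ valuation-transcendental'' is fine and is in fact a different route from the paper, which simply cites Theorem 1.3 of \cite{josneiKeyPolyMinimalPairs}: you re-derive it from Theorem~\ref{teorandretromeno} by splitting according to whether $\gamma=\mu(x-a)$ is torsion modulo $\mu\overline{\K}$, which is self-contained modulo the residue-transcendence of the monomial valuation in the torsion case (Proposition~\ref{lemrmuQtrans} applied to $\mu$ and $x-a$). The problem is the other direction, which is where the actual content of the theorem lies, and there your proposal has a genuine gap: you assert that a valuation-transcendental $\nu$ admits a (minimal) pair of definition, i.e.\ that some extension $\mu$ of $\nu$ to $\overline{\K}[x]$ is monomial, but you give no proof of this --- you explicitly defer it as ``the main obstacle''. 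This is precisely the step the paper proves: Proposition~\ref{propPopescu} (following \cite{popescu2}: Abhyankar plus divisibility of $\mu\overline{\K}$ give $\mu\overline{\K}(x)=\mu\overline{\K}$, one factors a residually transcendental $f/g$ into linear factors, normalizes each by a constant of the same value, and extracts some $\frac{x-a}{b}$ with transcendental residue, whence $\mu=\mu_{x-a}$ by Lemma~\ref{outroslema}) for the residue-transcendental case, and Proposition~\ref{lemValTransKbarraValMonomial} (a linear factor $x-a$ with $\mu(x-a)\notin\mu\overline{\K}$, then Lemma~\ref{lemMuQLivreTorcaoMuigualMQ}) for the value-transcendental case; only then does Proposition~\ref{propSnaoVazioMuigualMuQ}, via Lemma~\ref{lemMigualMadeltaParMinimaldeDefinicao} and Theorem~\ref{teoparminimalpolichave}, produce the key polynomial. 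Without an argument at this point your proof is a reduction of the theorem to its hardest ingredient, not a proof.

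Your sketched alternative does not close the gap either: choosing $Q\in\K[x]$ monic of minimal degree with $\nu(Q)$ non-torsion over $\nu\K$ (resp.\ with transcendental normalized residue) and invoking minimality does give $\nu=\nu_Q$ in the value-transcendental case (this is Lemma~\ref{lemMuQLivreTorcaoMuigualMQ}), but it does not give that $Q$ is a \emph{key} polynomial, and ``$\nu=\nu_q$ for some polynomial $q$'' is exactly the statement already known from \cite{josneiKeyPolyMinimalPairs}; the whole point of Theorem~\ref{teoValTransiffTruncamento} is to upgrade $q$ to a key polynomial, and truncations at non-key polynomials can be valuations. You would still need the descent through a minimal pair of definition (or some substitute) to certify key-polynomiality. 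Finally, in this direction you also omit the non-Krull case: if $\SU(\nu)\neq\{0\}$ then $\mu(x-a)=\infty$ and neither the minimal-pair framework nor Theorem~\ref{teorandretromeno} (which assumes $\nu$ Krull) applies; the paper handles this separately and easily, observing that the generator $Q$ of the support has $\epsilon(Q)=\infty$, hence is a key polynomial with $\nu=\nu_Q$.
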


In \cite{josneiKeyPolyMinimalPairs}, it is shown that $\nu$ is valuation-transcendental if and only if there exists a polynomial $q$ such that $\nu=\nu_q$. Hence, we have to show that if $\nu=\nu_q$ for some polynomial $q$, then there exists a key polynomial $Q$ such that $\nu=\nu_Q$. We point out that, if $\nu$ is residue-transcendental, it is shown in \cite{Andrei} that $\nu=\nu_Q$ for some key polynomial $Q$.

It is important to have that $\nu$ is the truncation on a key polynomial for many reasons. For instance, it is known that the graded algebra $\mathcal G_Q$ of a truncation on a key polynomial $Q$ has a simple structure (see the discussion on Section \ref{gradegalh}). This simple structure allows to study, for instance, irreducibility of elements in  $\mathcal G_Q$.

This paper is  divided as follows. In Section \ref{Preli}, we present the main definitions and results that will be used to prove our results. In Section \ref{SecResultado1}, we prove the result about the restriction of $\mu_{x-a}$ to $\K[x]$. The main goal of Section \ref{SecResultado2} is to show Theorem \ref{teoValTransiffTruncamento}. Finally, in Section \ref{gradegalh}, we present some results on the graded algebra associated to a truncation $\nu_q$, when it is a valuation. In particular, we give a brief description of the proof of Bengus-Lasnier of the result on the restriction of the truncation (in terms of graded algebras).

\section{Preliminaries}\label{Preli}
\begin{Def}
Take a commutative ring $R$ with unity. A \index{Valuation}\textbf{valuation} on $R$ is a mapping $\nu:R\lra \Gamma_\infty :=\Gamma \cup\{\infty\}$ where $\Gamma$ is an ordered abelian group (and the extension of addition and order to $\infty$ in the obvious way), with the following properties:
\begin{description}
	\item[(V1)] $\nu(ab)=\nu(a)+\nu(b)$ for all $a,b\in R$.
	\item[(V2)] $\nu(a+b)\geq \min\{\nu(a),\nu(b)\}$ for all $a,b\in R$.
	\item[(V3)] $\nu(1)=0$ and $\nu(0)=\infty$.
\end{description}
\end{Def}

Let $\nu: R \lra\Gamma_\infty$ be a valuation. The set $\SU(\nu)=\{a\in R\mid \nu(a )=\infty\}$ is called the \textbf{support of $\nu$}.  A valuation $\nu$ is a \index{Valuation!Krull}\textbf{Krull valuation} if $\SU(\nu)=\{0\}$. The \textbf{value group of $\nu$} is the subgroup of $\Gamma$ generated by 
$\{\nu(a)\mid a \in R\setminus \SU(\nu) \}
$ and is denoted by $\nu R$. If $R$ is a field, then we define the \textbf{valuation ring} of $\nu$  by $\VR_\nu:=\{ a\in R\mid \nu(a)\geq 0 \}$. The ring $\VR_\nu$ is a local ring with unique maximal ideal $\MI_\nu:=\{a\in R\mid \nu(a)>0 \}. $ We define the \textbf{residue field} of $\nu$ to be the field $\VR_\nu/\MI_\nu$ and denote it by $R\nu$. The image of $a\in \VR_\nu$ in $R\nu$ is denoted by $a\nu$. 

Let $\nu$ be a valuation on $\K[x]$,  the ring of polynomials in one indeterminate over the field $\K$. Let $q\in \K[x]$ be a non-constant polynomial.

\begin{Lema}\label{lemMuQLivreTorcaoMuigualMQ}
Suppose that  $q\in\K[x]$ is a polynomial such that $\nu(q)$ is torsion-free over  $\nu\K$ and for every  $f\in\K[x]$ with $\deg(f)<\deg(q)$ we have that $\nu(f)$ is torsion over  $\nu\K$. Then $\nu=\nu_q$.
\end{Lema}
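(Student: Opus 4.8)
The plan is to establish the two inequalities $\nu_q(f)\le\nu(f)$ and $\nu(f)\le\nu_q(f)$ for every $f\in\K[x]$; the first is a formal consequence of the valuation axioms, and the hypotheses enter only in the second. We may assume $f\neq 0$ (both sides are $\infty$ otherwise), and if $\deg(f)<\deg(q)$ the $q$-expansion of $f$ is just $f$ itself and the equality is trivial. So fix $f$ with $q$-expansion $f=f_0+f_1q+\cdots+f_sq^s$, where $\deg(f_i)<\deg(q)$ for all $i$. Axioms (V1) and (V2) give $\nu(f)\ge\min_{0\le i\le s}\nu(f_iq^i)=\nu_q(f)$, so everything reduces to showing that $\nu\bigl(\sum_i f_iq^i\bigr)=\min_i\nu(f_iq^i)$.

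For this I would invoke the standard fact that a valuation is additive on a sum of terms having pairwise distinct values: if $a_1,\dots,a_n$ satisfy $\nu(a_j)\neq\nu(a_k)$ whenever $j\neq k$, then $\nu\bigl(\sum_j a_j\bigr)=\min_j\nu(a_j)$. Since the terms with $f_i=0$ have value $\infty$ and are harmless both in the sum and in the minimum, the whole lemma reduces to the following claim: the values $\nu(f_iq^i)=\nu(f_i)+i\,\nu(q)$, taken over those $i$ with $f_i\neq 0$, are pairwise distinct.

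To prove the claim, suppose to the contrary that $\nu(f_i)+i\,\nu(q)=\nu(f_j)+j\,\nu(q)$ for some $i\neq j$ with $f_i\neq 0$ and $f_j\neq 0$. Then $(i-j)\,\nu(q)=\nu(f_j)-\nu(f_i)$. Because $\deg(f_i),\deg(f_j)<\deg(q)$, the hypothesis provides integers $m_i,m_j\ge 1$ with $m_i\,\nu(f_i)\in\nu\K$ and $m_j\,\nu(f_j)\in\nu\K$, hence $m_im_j\bigl(\nu(f_j)-\nu(f_i)\bigr)\in\nu\K$ and therefore $m_im_j(i-j)\,\nu(q)\in\nu\K$, where $m_im_j(i-j)$ is a nonzero integer. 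Since $\nu\K$ is a group, we may pass to the absolute value and conclude $N\,\nu(q)\in\nu\K$ for some integer $N\ge 1$, contradicting the assumption that $\nu(q)$ is torsion-free over $\nu\K$. This proves the claim, hence $\nu(f)=\nu_q(f)$, hence $\nu=\nu_q$.

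I do not anticipate a genuine obstacle here: the argument is short once the reduction to "pairwise distinct values" is made. The only points needing a little care are the bookkeeping around vanishing coefficients (handled by the convention $\nu(0)=\infty$) and the degenerate cases $f=0$ and $\deg(f)<\deg(q)$, which are dealt with at the outset.
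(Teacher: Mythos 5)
Your proof is correct and follows essentially the same route as the paper: both reduce to showing that the values $\nu(f_iq^i)$ of the terms in the $q$-expansion are pairwise distinct, using the torsion hypothesis on the $\nu(f_i)$ against the torsion-freeness of $\nu(q)$, and then conclude $\nu(f)=\min_i\nu(f_iq^i)=\nu_q(f)$. Your write-up merely makes explicit the bookkeeping (vanishing coefficients, the integers $m_i,m_j$) that the paper leaves implicit.
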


\begin{proof}
	For any $f\in \K[x]$, let $f = f_0 + \ldots + f_s q^s$ be its $q$-expansion.  For $i,j$, $0\leq i<j\leq s$, we claim that $\nu(f_iq^i)\neq \nu(f_jq^j)$. Otherwise, we would have
	$$\nu(f_i)+i\nu(q) = \nu(f_j)+j\nu(q)$$
	which implies 
	$$ (i-j)\nu(q) = \nu(f_j)-\nu(f_i).$$
	However, since $\deg(f_i),\deg(f_j)<\deg(q)$, we have by hypothesis that $\nu(f_i)$ and $\nu(f_j)$ are torsion over $\nu\K$. This would imply that $\nu(q)$ is torsion over $\nu\K$, which is a contradiction to our assumption on $\nu(q)$. Thus $\nu(f_iq^i)\neq \nu(f_jq^j)$ if $i\neq j$. Hence,
	$$\nu(f) = \nu(f_0 + \ldots + f_s q^s)=\underset{0\leq i \leq s}{\min}\{ \nu(f_iq^i)  \}=\nu_q(f).$$
	
\end{proof}

Our main definition of key polynomial relates to the one in \cite{josneimonomial}. We first define 
$$\alpha = \begin{cases}
\min\{\deg(g) \mid g\in \SU(\nu)\} \text{ if } \SU(\nu)\neq \{ 0\},\\
\qquad \qquad\infty \hspace{2.3cm} \text{ if } \SU(\nu)= \{ 0\}.
\end{cases} $$

Let $f\in \K[x]$ be a non-zero polynomial. For every $b\in \N$, we consider the formal \textbf{Hasse-derivative of order} $b$ of $f(x)=a_0+\ldots+a_sx^s$, defined by
$$\partial_bf(x):= \sum_{i=b}^s\binom{i}{b}a_ix^{i-b}\in \K[x].  $$

\begin{Def}
	Let $f\in \K[x]$ be a non-zero polynomial. 
	\begin{itemize}
		\item If  $\deg(f)<\alpha$ and $\deg(f)\neq 0$, then
		$$\epsilon(f):=\underset{1\leq b\leq \deg(f)}{\max}\left\lbrace\left.\frac{\nu(f) - \nu(\partial_bf)}{b}  \;\right| \nu(f),\nu(\partial_bf)\in\Gamma \right\rbrace\in \Gamma \otimes_\Z \Q. $$
		
		\item If $\deg(f)=0$, then $\epsilon(f):=-\infty$.
		
		\item If $f$ is a generator of $\SU(\nu)$, then $\epsilon(f):= \infty$.
	\end{itemize}

\end{Def}

\begin{Def}\label{defiPoliChave}
	A monic polynomial $Q\in \K[x]$ is a  \textbf{key polynomial} of level $\epsilon(Q)$ if, for every $f\in \K[x]$, then
	$$\epsilon(f)\geq \epsilon(Q) \Longrightarrow \deg(f)\geq \deg(Q). $$
\end{Def}

Let $\nu$ be a valuation on $\K[x]$ and take  $\mu$ an extension of $\nu$ to $\overline{\K}[x]$.

\begin{Def}
	A \textbf{minimal pair} for $\mu$ is a pair $(a, \delta)\in \overline{\K}\times \mu\overline{\K}[x]$ such that, for all $b\in \overline{\K}$, we have
	$$ \mu(b-a)\geq \delta \Lra [\K(b):\K]\geq [\K(a):\K].$$
\end{Def}

\begin{Def}
	
	Let $f\in \K[x]$ be a non-constant polynomial and suppose that $\mu(x-a)\neq \infty$ for some root $a\in \overline{\K}$ of $f$. We define
	$$\delta(f):=\max\{\mu(x-a)\mid a\in \overline{\K} \text{ and }f(a)=0\}. $$
	
	A root $a$ of $f$ such that $\delta(f)=\mu(x-a)$ is called an \textbf{optimizing root} of $f$. 
\end{Def}

\begin{Def}
Let $(\K,v)$ be any valued field and take $\delta$ in some extension of the value group $v\K$. Then the map
\[
\nu(a_0+a_1x+\ldots+a_sx^s)=\min_{0\leq i\leq s}\{v(a_i)+i\delta\}
\]
is a valuation on $\K[x]$  (see for instance Corollary 2.4 of \cite{josneimonomial}). This valuation is called the \textbf{monomial valuation}, with respect to $x$, obtained by $\delta$ and $v$.
\end{Def}

If $\mu$ is a valuation on $\overline{\K}[x]$, $a\in \overline{\K}$ and $\delta=\mu(x-a)$ we will denote by $\mu_{a,\delta}$ the monomial valuation, with respect to $x-a$, obtained by $\delta$ and $\mu|_{\overline\K}$, i.e., 
$$\mu_{a,\delta}\left( \sum_{i=0}^r a_i (x-a)^i \right):= \underset{0\leq i \leq r}{\min} \{\mu(a_i)+i\delta\}. $$

\begin{Def}
A pair $(a, \delta)\in \overline{\K}\times \mu(\K[x])$ such that $\mu= \mu_{a,\delta}$ is called a \textbf{pair of definition} for $\mu$. We say that a pair $(a,\delta)$ is a \textbf{minimal pair of definition}  if $(a,\delta)$ is a minimal pair and a pair of definition for $\mu$.	
\end{Def}

Suppose that $\mu$ is a valuation on $\overline{\K}[x]$ extending a valuation $\nu$ on $\K[x]$.
\begin{Lema}\label{Lemaintermediario}
If $\mu(a-c)\geq\mu(x-a)$, then $\mu_{x-a}(x-c)=\mu(x-c)$.
\end{Lema}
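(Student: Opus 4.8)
The plan is to read everything off the $(x-a)$-expansion of $x-c$, which is simply
\[
x-c=(a-c)+1\cdot(x-a).
\]
Writing $\delta=\mu(x-a)$, the definition of the monomial valuation $\mu_{x-a}=\mu_{a,\delta}$ gives at once
\[
\mu_{x-a}(x-c)=\min\{\mu(a-c)+0\cdot\delta,\ \mu(1)+1\cdot\delta\}=\min\{\mu(a-c),\delta\},
\]
and since by hypothesis $\mu(a-c)\ge\delta$, this equals $\delta=\mu(x-a)$. So the left-hand side of the claimed equality is already pinned down with no real work.

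It then remains to show that $\mu(x-c)=\mu(x-a)$ as well, after which combining the two computations finishes the proof. One inequality is routine: applying (V2) to $x-c=(x-a)+(a-c)$ and using $\mu(a-c)\ge\mu(x-a)$ gives $\mu(x-c)\ge\mu(x-a)$. For the reverse inequality I would apply (V2) to $x-a=(x-c)+(c-a)$, which yields $\mu(x-a)\ge\min\{\mu(x-c),\mu(a-c)\}$; if the minimum here is attained at $\mu(x-c)$ we are done immediately.

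The step I expect to require the most care — essentially the only obstacle — is the borderline situation $\mu(a-c)=\mu(x-a)$, in which the minimum just above can be attained at $\mu(a-c)$ and one cannot yet conclude $\mu(x-c)\le\mu(x-a)$. Under the strict hypothesis $\mu(a-c)>\mu(x-a)$ this case does not arise and $\mu(x-c)=\mu(x-a)$ follows from the standard ``unequal valuations'' case of (V2); in the intended applications $a$ is an optimizing root of the polynomial of which $c$ is also a root, so $\mu(x-c)\le\mu(x-a)$ holds for free and the argument closes there as well. In either reading the proof reduces to the two short inequalities above together with the one-line computation of $\mu_{x-a}(x-c)$.
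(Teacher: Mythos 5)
Your computation of the left-hand side and the inequality $\mu(x-c)\ge\min\{\mu(x-a),\mu(a-c)\}=\mu(x-a)$ are fine, and the obstacle you single out is genuine --- but it is an obstacle in the printed statement, not in your argument. At the borderline $\mu(a-c)=\mu(x-a)$ the lemma as stated is actually false: let $\mu$ be the monomial valuation with respect to $x-c$ attached to some value $\delta'>\mu(a-c)$. Then $\mu(x-c)=\delta'$ and $\mu(x-a)=\min\{\delta',\mu(a-c)\}=\mu(a-c)$, so the printed hypothesis $\mu(a-c)\ge\mu(x-a)$ holds, while $\mu_{x-a}(x-c)=\min\{\mu(a-c),\mu(x-a)\}=\mu(a-c)<\delta'=\mu(x-c)$. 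So no completion of your argument can close the equality case; the hypothesis that is actually intended (and the only one ever verified when the lemma is applied, in Lemma~\ref{lemParDeDefinicaoCondicao}, Lemma~\ref{lemMuXmenosaQigualMuQQ} and Lemma~\ref{lemresiduosMuXmenosB}) is $\mu(x-c)\le\mu(x-a)$.

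This is also what the paper's own proof establishes: it writes $x-c=(x-a)+(a-c)$, records $\mu_{x-a}(x-c)=\min\{\mu(a-c),\mu(x-a)\}$, and then treats the two cases $\mu(x-c)<\mu(x-a)$ and $\mu(x-c)=\mu(x-a)$; the printed hypothesis $\mu(a-c)\ge\mu(x-a)$ is never invoked. Under the corrected hypothesis your argument does close, but it only reproduces the second case, where indeed $\mu(x-c)=\mu(x-a)$. The corrected lemma also needs the case $\mu(x-c)<\mu(x-a)$, and there your intermediate target $\mu(x-c)=\mu(x-a)$ is false; instead one observes from $a-c=(x-c)-(x-a)$ that $\mu(a-c)=\mu(x-c)$, whence $\mu_{x-a}(x-c)=\min\{\mu(a-c),\mu(x-a)\}=\mu(x-c)$. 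In short: your diagnosis of where the difficulty sits is exactly right, but to prove the lemma the paper actually uses you should replace the hypothesis by $\mu(x-c)\le\mu(x-a)$ and add this strict-inequality case rather than aiming at $\mu(x-c)=\mu(x-a)$ in general.
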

\begin{proof}
Since $x-c=(x-a)+(a-c)$ we have
\begin{equation}\label{eqobreopaimin}
\mu_{x-a}(x-c)=\min\{\mu(a-c),\mu(x-a)\}.
\end{equation}
	
If $\mu(x-c)<\delta=\mu(x-a)$, then
\[
\mu(a-c)=\min\{\mu(x-a),\mu(x-c)\}=\mu(x-c)<\mu(x-a).
\]
This and \eqref{eqobreopaimin} give us
\[
\mu_{x-a}(x-c)=\mu(a-c)=\mu(x-c)
\]
	
If $\mu(x-c)=\mu(x-a)$, then
\[
\mu(a-c)\geq\min\{\mu(x-a),\mu(x-c)\}=\mu(x-a).
\]
Hence, by \eqref{eqobreopaimin}, we obtain
\[
\mu_{x-a}(x-c)=\mu(x-c).
\]
\end{proof}
\begin{Lema}\label{lemParDeDefinicaoCondicao}
A pair $(a,\delta)$ is a pair of definition for $\mu$ if and only if $\delta = \mu(x-a)\geq \mu(x-c)$ for every  $c\in \overline{\K}$. 
\end{Lema}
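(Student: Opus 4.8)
The statement is an iff, so I would prove the two implications separately, and in both directions the key technical input is Lemma \ref{Lemaintermediario} together with the defining property of the monomial valuation $\mu_{a,\delta}$.

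For the "only if" direction, suppose $(a,\delta)$ is a pair of definition, so $\mu=\mu_{a,\delta}$ with $\delta=\mu(x-a)$. Assume for contradiction that there is some $c\in\overline{\K}$ with $\mu(x-c)>\delta$. Writing $x-c=(x-a)+(a-c)$ and using that $\mu=\mu_{a,\delta}$ is monomial with respect to $x-a$, I get $\mu(x-c)=\mu_{a,\delta}(x-c)=\min\{\mu(a-c),\delta\}\le\delta$, contradicting $\mu(x-c)>\delta$. Hence $\delta\ge\mu(x-c)$ for every $c\in\overline{\K}$.

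For the "if" direction, assume $\delta=\mu(x-a)\ge\mu(x-c)$ for every $c\in\overline{\K}$; I must show $\mu=\mu_{a,\delta}$. Since both $\mu$ and $\mu_{a,\delta}$ are valuations on $\overline{\K}[x]$ extending $\mu|_{\overline{\K}}$, and since every polynomial has an $(x-a)$-expansion $f=\sum_{i=0}^r a_i(x-a)^i$ with $a_i\in\overline{\K}$, it suffices to check that $\mu$ computes $\mu(f)$ as the minimum of the $\mu((x-a)^i)$ contributions — equivalently, that $\mu=\mu_{a,\delta}$ on linear factors and that $\mu$ is "monomial" in $x-a$. The clean way is: $\overline{\K}[x]$ is generated over $\overline{\K}$ by the linear polynomials $x-c$, $c\in\overline{\K}$, so a valuation on $\overline{\K}[x]$ extending $\mu|_{\overline{\K}}$ is determined by its values on all $x-c$; hence it is enough to show $\mu_{a,\delta}(x-c)=\mu(x-c)$ for every $c\in\overline{\K}$. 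But the hypothesis $\mu(a-c)\ge\mu(x-a)$ is not immediate — rather, from $\delta\ge\mu(x-c)$ one gets: if $\mu(x-c)<\delta$ then $\mu(a-c)=\min\{\mu(x-a),\mu(x-c)\}=\mu(x-c)$ and \eqref{eqobreopaimin}-type reasoning gives $\mu_{a,\delta}(x-c)=\mu(x-c)$; if $\mu(x-c)=\delta$ then $\mu(a-c)\ge\delta=\mu(x-a)$, so Lemma \ref{Lemaintermediario} applies directly and gives $\mu_{x-a}(x-c)=\mu(x-c)$. Either way $\mu_{a,\delta}(x-c)=\mu(x-c)$ for all $c$, and since both valuations agree on $\overline{\K}$ and on all linear polynomials, and any element of $\overline{\K}[x]$ factors into linear factors over $\overline{\K}$, we conclude $\mu=\mu_{a,\delta}$.

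The main obstacle is the reduction step in the "if" direction: justifying that agreement on all linear polynomials $x-c$ (and on $\overline{\K}$) forces $\mu=\mu_{a,\delta}$ on all of $\overline{\K}[x]$. This uses (V1) to split $\mu$ of a product of linear factors as a sum, which is legitimate because $\overline{\K}$ is algebraically closed so every polynomial in $\overline{\K}[x]$ is a product of linear factors and a constant; the same holds for $\mu_{a,\delta}$. I expect the write-up to hinge on stating this factorization argument carefully, while the computation with $x-c$ is handled essentially verbatim by Lemma \ref{Lemaintermediario} and the case split above.
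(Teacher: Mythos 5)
Your proposal is correct and follows essentially the same route as the paper: the "only if" direction is the direct computation $\mu(x-c)=\mu_{a,\delta}(x-c)=\min\{\mu(x-a),\mu(a-c)\}\leq\delta$, and the "if" direction reduces to monic linear polynomials via factorization over $\overline{\K}$ and then invokes the case analysis underlying Lemma~\ref{Lemaintermediario}. Your explicit case split (redoing the case $\mu(x-c)<\delta$ by hand rather than citing Lemma~\ref{Lemaintermediario} wholesale) is if anything slightly more careful than the paper's "immediate consequence" phrasing, but it is the same argument.
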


\begin{proof}
Assume that $(a,\delta)$ is a pair of definition. Then, for every $c\in \overline{\K}$, we have
	$$\mu(x-c)=\mu_{a,\delta}(x-c)=\min\{ \mu(x-a),\mu(a-c)\}\leq \mu(x-a)=\delta. $$

For the converse, suppose that $\delta=\mu(x-a)\geq \mu(x-c)$ for every $c\in \overline{\K}$. Since every polynomial in $\overline\K[x]$ can be written as a product of degree one factors, it is enough to show that
\[
\mu(x-c)=\mu_{a,\delta}(x-c)
\]
for every $c\in \overline \K$. This is an immediate consequence of Lemma \ref{Lemaintermediario} and our assumption. 
\end{proof}

If $(a, \delta)$ is a pair of definition for $\mu$, then there might exist other pairs of definiton for $\mu$. The next result presents a way to relate these pairs. 
\begin{Lema}\label{lemParesDefinemMesmaVal}
	Two pairs $(a,\delta)$ e $(a', \delta')$ define the same monomial valuation if and only if  $\delta = \delta'$ and $\mu(a-a')\geq \delta$. 
\end{Lema}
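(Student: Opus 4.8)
The statement to prove is Lemma~\ref{lemParesDefinemMesmaVal}: two pairs $(a,\delta)$ and $(a',\delta')$ define the same monomial valuation if and only if $\delta=\delta'$ and $\mu(a-a')\ge\delta$. I would prove both implications by exploiting that a monomial valuation on $\overline{\K}[x]$ is determined by its values on the degree-one polynomials $x-c$, $c\in\overline{\K}$, together with the normalization $\mu_{a,\delta}|_{\overline{\K}}=\mu|_{\overline{\K}}$. This reduction is already implicit in the proof of Lemma~\ref{lemParDeDefinicaoCondicao} (every polynomial in $\overline{\K}[x]$ factors into linear terms), so I would state it up front and then it suffices to compare $\mu_{a,\delta}(x-c)$ and $\mu_{a',\delta'}(x-c)$ for all $c$.

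For the ``if'' direction, assume $\delta=\delta'$ and $\mu(a-a')\ge\delta$. First I would compute $\mu_{a,\delta}(x-a')=\min\{\mu(x-a)$-type term$\}$; more precisely, writing $x-a'=(x-a)+(a-a')$, the definition of the monomial valuation gives $\mu_{a,\delta}(x-a')=\min\{\delta,\mu(a-a')\}=\delta$, using the hypothesis. Then for an arbitrary $c$, write $x-c=(x-a')+(a'-c)$ and also $x-c=(x-a)+(a-c)$; since $\mu(a-a')\ge\delta$ one gets $\mu(a-c)$ and $\mu(a'-c)$ are comparable in a controlled way (they agree whenever either is $<\delta$, by the ultrametric inequality), and a short case split on whether $\mu(x-c)<\delta$ or $\ge\delta$ shows $\mu_{a,\delta}(x-c)=\mu_{a',\delta'}(x-c)$. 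This is essentially the same two-case argument as in Lemma~\ref{Lemaintermediario}, applied with both centers.

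For the ``only if'' direction, suppose $\mu_{a,\delta}=\mu_{a',\delta'}=:\mu$. By Lemma~\ref{lemParDeDefinicaoCondicao} applied to this common valuation $\mu$, both $(a,\delta)$ and $(a',\delta')$ are pairs of definition for $\mu$, so $\delta=\mu(x-a)$, $\delta'=\mu(x-a')$, and moreover $\mu(x-c)\le\delta$ and $\mu(x-c)\le\delta'$ for every $c\in\overline{\K}$. Taking $c=a'$ gives $\delta'=\mu(x-a')\le\delta$, and taking $c=a$ gives $\delta=\mu(x-a)\le\delta'$, hence $\delta=\delta'$. For the second condition, evaluate: $\mu(a-a')$. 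Since $\mu_{a,\delta}(x-a')=\min\{\delta,\mu(a-a')\}$ and this equals $\mu(x-a')=\delta'=\delta$, we must have $\min\{\delta,\mu(a-a')\}=\delta$, i.e. $\mu(a-a')\ge\delta$, as desired.

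**Main obstacle.** The only subtlety is the bookkeeping in the ``if'' direction: making sure the case analysis on $\mu(x-c)$ versus $\delta$ is exhaustive and that in each case both $\mu_{a,\delta}(x-c)$ and $\mu_{a',\delta'}(x-c)$ land on the same value. I expect this to be routine given Lemma~\ref{Lemaintermediario}, which already packages the computation $\mu_{x-a}(x-c)=\mu(x-c)$ under the hypothesis $\mu(a-c)\ge\mu(x-a)$; the role of $\mu(a-a')\ge\delta$ is precisely to transfer the hypothesis of that lemma from one center to the other. No genuinely new idea is needed beyond the linear-factor reduction and the ultrametric triangle inequality.
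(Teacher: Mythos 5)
Your proposal is correct and takes essentially the same approach as the paper: reduce to monic linear polynomials, use the two-case ultrametric computation (on whether the relevant value is below or at least $\delta$) for the ``if'' direction, and extract $\delta=\delta'$ and $\mu(a-a')\geq\delta$ by evaluating the common valuation at $x-a'$ and $x-a$ for the ``only if'' direction. The only cosmetic difference is that you route the ``only if'' part through Lemma~\ref{lemParDeDefinicaoCondicao}, whereas the paper performs the symmetric computation $\delta'=\min\{\delta,\mu(a-a')\}$ and $\delta=\min\{\delta',\mu(a-a')\}$ directly.
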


\begin{proof}
	
	First we suppose that  $(a,\delta)$ and $(a', \delta')$ define the same monomial valuation. Then
	$$\delta' = \mu_{a',\delta'}(x-a') = \mu_{a,\delta}(x-a') = \min\{\delta, \mu(a-a')\}.$$
	
	By a symmetric argument we see that $\delta = \min\{ \delta', \mu(a-a')  \}$. Thus $\delta=\delta'$ and $\mu(a-a')\geq \delta$.
	
	Now we will show that if $\delta'=\delta$ and $\mu(a-a')\geq \delta$, then $\mu_{a',\delta'}=\mu_{a,\delta}$. It is enough to prove that they conincide on monic linear polynomials. For $b\in \overline{\K}$, we have $\mu_{a',\delta'}(b)=\mu(b) = \mu_{a,\delta}$. Now, for $x-b\in \overline{\K}[x]$ we have
	$$ \mu_{a',\delta}(x-b) =\min\{ \delta, \mu(a-b)  \} \text{ and } \mu_{a,\delta}(x-b) =\min\{ \delta, \mu(a'-b)  \}. $$
	If $\mu(a-b)\geq \delta$, then $ \mu_{a,\delta}(x-b) = \delta$ and
	$$ \mu(a'-b) = \mu(a'-a+a-b)\geq \min\{ \mu(a'-a), \mu(a-b)\}\geq \delta.$$
	Hence $$\mu_{a,\delta}(x-b) = \delta = \mu_{a',\delta}(x-b).$$
	
	If $\mu(a-b)<\delta$, then
	$$\mu(a'-b) = \mu(a'-a+a-b) =  \mu(a-b).$$
	Hence, $$\mu_{a,\delta}(x-b) = \mu(a-b)=\mu(a'-b) = \mu_{a',\delta}(x-b).$$
	
\end{proof}
Let $S$ be the set
$$S := \{ a\in\overline{\K}\mid \mu=  \mu_{a,\delta} \}.$$
\begin{Obs}\label{obsbSRaizOtimiza}
	If $a\in S$, then by Lemma~\ref{lemParDeDefinicaoCondicao} we see that $a$ is an optimizing root of its minimal polynomial over $\K$. 
\end{Obs}

\begin{Lema}\label{lemMigualMadeltaParMinimaldeDefinicao}
	Suppose that $S\neq \emptyset$. If $a\in S$ has the smallest degree over $\K$ among elements in $S$, then $(a, \mu(x-a))$ is a minimal pair of definition for $\mu$. 
\end{Lema}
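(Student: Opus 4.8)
The plan is to show that the pair $(a,\mu(x-a))$ with $a$ of minimal degree in $S$ is a minimal pair; that it is a pair of definition is immediate since $a\in S$. So we must verify the defining property of a minimal pair: for all $b\in\overline{\K}$, if $\mu(b-a)\geq\mu(x-a)$ then $[\K(b):\K]\geq[\K(a):\K]$.

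First I would take $b\in\overline{\K}$ with $\mu(b-a)\geq\delta:=\mu(x-a)$ and aim to show $b\in S$; the minimality of $\deg(a)$ over $\K$ among elements of $S$ then forces $[\K(b):\K]=\deg(b)\geq\deg(a)=[\K(a):\K]$, which is exactly what we need. (Here I am using that elements of $\overline\K$ have degree over $\K$ equal to the degree of their minimal polynomial.) To show $b\in S$, i.e. $\mu=\mu_{b,\delta}$, the natural move is to invoke Lemma~\ref{lemParesDefinemMesmaVal}: since $a\in S$ means $\mu=\mu_{a,\delta}$, it suffices to check that $(a,\delta)$ and $(b,\delta)$ define the same monomial valuation, and by that lemma this holds precisely when the two second coordinates agree (they do, both are $\delta$) and $\mu(a-b)\geq\delta$. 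But $\mu(a-b)=\mu(b-a)\geq\delta$ is exactly our hypothesis on $b$. Hence $\mu_{b,\delta}=\mu_{a,\delta}=\mu$, so $b\in S$, and we are done.

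The argument is short and essentially a bookkeeping chain through the lemmas already proved; I do not anticipate a genuine obstacle. The one point that deserves care is the translation between "$a$ has smallest degree over $\K$ among elements of $S$" and the inequality $[\K(b):\K]\geq[\K(a):\K]$ in the definition of a minimal pair — one must note that $b$, being an element of $\overline\K$, lies in $S$ and therefore has degree at least that of $a$ by the minimality assumption, and that $[\K(c):\K]$ for $c\in\overline\K$ equals the degree of $c$ over $\K$. A secondary subtlety is that Lemma~\ref{lemParesDefinemMesmaVal} is stated with possibly different $\delta,\delta'$, so I would explicitly record that here $\delta'=\delta$ and only the condition $\mu(a-b)\geq\delta$ remains to be checked, which is the given.
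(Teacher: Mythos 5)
Your proposal is correct and follows essentially the same route as the paper: take $b$ with $\mu(b-a)\geq\delta$, apply Lemma~\ref{lemParesDefinemMesmaVal} (in the direction $\delta=\delta'$, $\mu(a-b)\geq\delta$ implies $\mu_{a,\delta}=\mu_{b,\delta}$) to conclude $b\in S$, and then use the minimality of $\deg(a)$ to get $[\K(b):\K]\geq[\K(a):\K]$. Nothing to add.
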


\begin{proof}
Take $b\in \overline{\K}$ such that $\mu(b-a)\geq \delta$. Thus, by Lemma~\ref{lemParesDefinemMesmaVal}, $\mu_{a, \delta}=\mu_{b,\delta}$. That is, $b\in S$. By the minimality of $a$, it follows that
	$$[\K(b):\K]\geq [\K(a):\K]  $$
	and then $(a,\mu(x-a))$ is a minimal pair for $\mu$. Therefore $(a,\delta)$ is a minimal pair of definition for $\mu$.
\end{proof}

In \cite{josneiKeyPolyMinimalPairs} it is proved the following relation between minimal pairs, optimizing roots and key polynomials.

\begin{Teo}[Theorem 1.1 of \cite{josneiKeyPolyMinimalPairs}]\label{teoparminimalpolichave} Let $Q\in \K[x]$ be a monic irreducible polynomial and choose   an optimizing root $a$ of $Q$. Then $Q$ is a key polynomial for $\nu$ if and only if $(a,\delta(Q))$ is a minimal pair for $\nu$. Moreover, $(a, \delta(Q))$ is a minimal pair of definition for $\nu$ if and only if $Q$ is a key polynomial and $\nu=\nu_Q$. 
\end{Teo}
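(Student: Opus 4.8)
The plan is to bridge Definition~\ref{defiPoliChave}, which is phrased via $\epsilon$, and the notion of a minimal pair, which is phrased via approximation of $x$ by roots, through one ``root dictionary'', and then to carry out the rest with only the valuation axioms (plus Theorem~\ref{teorandretromeno} for the second equivalence). The dictionary I would establish (or cite) first is that
\[
\epsilon(f)=\delta(f):=\max\{\mu(x-b)\mid f(b)=0\}
\]
for every non-zero $f\in\K[x]$ with $\deg f<\alpha$ (the case of an $f$ generating $\SU(\nu)$ being trivial, both sides being $\infty$). Its proof rests on the product formula $\mu(g)=\sum_i\mu(x-b_i)$ over the roots of $g$, applied both to $g=f$ and to its Hasse derivatives $\partial_b f$: ordering the root-values $\gamma_1\ge\dots\ge\gamma_s$ of $f$ one has $\mu(f)=\sum_i\gamma_i$ and $\delta(f)=\gamma_1$, and the estimate $\mu(\partial_b f)\ge\sum_{i>b}\gamma_i$ gives $\epsilon(f)\le\gamma_1=\delta(f)$ at once; the reverse inequality amounts to a ``no cancellation'' statement for the Hasse derivatives, and this (especially in positive characteristic, where one reduces to the residue field) is the one genuinely non-formal point. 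Granting the dictionary, $\epsilon(Q)=\delta(Q)=\mu(x-a)$; I set $\delta:=\delta(Q)$ and $n:=\deg Q=[\K(a):\K]$. I also record the trivial remark that for $b\in\overline\K$ the inequalities $\mu(b-a)\ge\delta$ and $\mu(x-b)\ge\delta$ are equivalent, each following from the other by \textbf{(V2)} applied to $x-b=(x-a)-(b-a)$, resp.\ $b-a=(x-a)-(x-b)$, together with $\mu(x-a)=\delta$.

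The first equivalence is then bookkeeping. For ``$\Leftarrow$'': assume $(a,\delta)$ is a minimal pair and $\epsilon(f)\ge\epsilon(Q)=\delta$; either $\deg f\ge\alpha>n$, or $\delta(f)=\epsilon(f)\ge\delta$ so some root $b$ of $f$ has $\mu(x-b)\ge\delta$, hence $\mu(b-a)\ge\delta$, hence $[\K(b):\K]\ge n$ by the minimal-pair property, hence $\deg f\ge n$; so $Q$ is a key polynomial. For ``$\Rightarrow$'': assume $Q$ is a key polynomial and $\mu(b-a)\ge\delta$; then $\mu(x-b)\ge\delta$, so the minimal polynomial $g$ of $b$ over $\K$ has $\delta(g)\ge\mu(x-b)\ge\delta=\epsilon(Q)$, hence (unless already $\deg g\ge\alpha>n$) $\epsilon(g)=\delta(g)\ge\epsilon(Q)$, so $[\K(b):\K]=\deg g\ge\deg Q=n$ by Definition~\ref{defiPoliChave}; thus $(a,\delta)$ is a minimal pair.

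Since a minimal pair of definition is a minimal pair that is also a pair of definition, the first equivalence reduces the second to: \emph{if $Q$ is a key polynomial with optimizing root $a$, then $(a,\delta)$ is a pair of definition for $\mu$ if and only if $\nu=\nu_Q$}. Under these hypotheses $\mu_{x-a}=\mu_{a,\delta}$, so Theorem~\ref{teorandretromeno} gives $\nu_Q(f)=\mu_{a,\delta}(f)$ for every $f\in\K[x]$. If $(a,\delta)$ is a pair of definition, then $\mu=\mu_{a,\delta}$, hence $\nu(f)=\mu(f)=\mu_{a,\delta}(f)=\nu_Q(f)$ for all $f\in\K[x]$. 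Conversely, if $\nu=\nu_Q$ then $\mu$ and $\mu_{a,\delta}$ agree on $\K[x]$; given $c\in\overline\K$ with minimal polynomial $P\in\K[x]$, compare $\mu(P)=\sum_{c'}\mu(x-c')$ with $\mu_{a,\delta}(P)=\sum_{c'}\mu_{a,\delta}(x-c')$ (sums over the roots $c'$ of $P$): since $\mu_{a,\delta}\le\mu$ termwise and the totals coincide, every term is an equality, in particular $\mu(x-c)=\mu_{a,\delta}(x-c)=\min\{\delta,\mu(a-c)\}\le\delta$; by Lemma~\ref{lemParDeDefinicaoCondicao} this says $(a,\delta)$ is a pair of definition.

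If one wants the second equivalence to be self-contained (not using Theorem~\ref{teorandretromeno}, which is logically downstream here), the only substitute needed is a direct proof that, for a key polynomial $Q$, no cancellation occurs among the summands of a $Q$-expansion — equivalently, that $\IN_\nu Q$ behaves, up to a power, as a free variable over the subalgebra of $\GA_\nu\K[x]$ generated by the initial forms of polynomials of degree $<n$ (the Mac\,Lane--Vaqui\'e picture of the graded ring of a truncation, discussed in Section~\ref{gradegalh}); what one feeds in there is the contrapositive of Definition~\ref{defiPoliChave}, namely $\epsilon(f)<\epsilon(Q)$ whenever $\deg f<n$. This non-cancellation statement, together with the positive-characteristic case of the dictionary $\epsilon=\delta$, is precisely where one must go beyond \textbf{(V1)}--\textbf{(V3)}, and is the step I expect to be the real obstacle; the first equivalence and the soft reductions above I would write out in full.
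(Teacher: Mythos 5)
The paper itself does not prove this statement: it is imported as Theorem 1.1 of \cite{josneiKeyPolyMinimalPairs}, so the only comparison available is with that source, whose strategy (reduce everything to the identity $\epsilon(f)=\delta(f)$ and then translate the key-polynomial condition into the minimal-pair condition) is exactly the skeleton you propose. Your bookkeeping for the first equivalence is correct, up to one harmless slip: you assert $\alpha>n$ without justification, but only $\alpha\geq n$ is needed, and it follows in one direction from the minimal-pair property applied to a root of a generator of $\SU(\nu)$, and in the other from the key-polynomial property applied to that generator, whose $\epsilon$ is $\infty$. The real issue is that the pivot of the whole argument, the hard half of the dictionary $\epsilon(f)\geq\delta(f)$, is precisely the main technical content of \cite{josneiKeyPolyMinimalPairs}; calling it a ``no cancellation statement for the Hasse derivatives'' names the difficulty (one must exhibit a specific order $b$ at which $\mu(\partial_b f)$ actually attains the lower bound, and in positive characteristic this requires a genuinely delicate argument) but does not overcome it. As written, the proposal is a correct reduction to that identity, not a proof of the theorem.

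For the ``moreover'' part there is a second gap, which you flag yourself: routing it through Theorem \ref{teorandretromeno} is circular inside this paper, since the proof of Theorem \ref{teorandretromeno} invokes Theorem \ref{teoparminimalpolichave} through Remark \ref{obsGrauMenor} and Lemma \ref{lemGrauMenorDeltaMenor}. Your proposed substitute --- that no cancellation occurs in $Q$-expansions, equivalently that $R_Q$ is a ring and $\inv_q(Q)$ is free over it (Lemma \ref{lemMQeProdhi} (ii), Theorem \ref{tehoremkeyplo}) --- is again a substantive result that this paper only cites (Lemma 2.3 (iii) of \cite{josneiKeyPolyPropriedades}, Corollary 3.52 of \cite{leloup}), so invoking it does not make the argument self-contained either. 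Granting these two inputs, the soft steps are sound: the equivalence $\mu(b-a)\geq\delta\Leftrightarrow\mu(x-b)\geq\delta$, the appeal to Lemma \ref{lemParDeDefinicaoCondicao}, and the termwise-equality argument comparing $\mu(P)=\sum_{c'}\mu(x-c')$ with $\mu_{a,\delta}(P)=\sum_{c'}\mu_{a,\delta}(x-c')$ all work, the last one provided $\nu$ is Krull as in \eqref{sit} so that no infinite values enter (which is also the only setting in which Theorem \ref{teorandretromeno} is stated). In short: the architecture is right and matches the cited source, but the two load-bearing lemmas are deferred rather than proved, so this is an outline, not yet a proof.
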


We present two lemmas that show some properties of optimizing roots and minimal pairs. These lemmas will be very useful in Section~\ref{SecResultado1}.

\begin{Lema}\label{lemMuXmenosaQigualMuQQ}
	Let $f\in \overline{\K}[x]$ and let $(a,\delta(f))$ be a pair such that $a\in \overline{\K}$ is an optimizing  root of $f$. If $g\in \overline{\K}[x]$ is such that $\delta(g)<\delta(f)$, then
\[
\mu_{x-a}(g) = \mu(g) = \mu(g(a))\mbox{ and }\left(\frac{g}{g(a)}\right)\mu_{x-a}=1.
\]
Moreover, if $Q\in \K[x]$ is a key polynomial for $\nu$, then for an optimizing root $a$ of $Q$ we have
	$$\mu_{x-a}(Q) = \mu(Q).$$ 
\end{Lema}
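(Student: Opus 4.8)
The plan is to prove the two assertions separately, the second being a quick consequence of the first. For the first assertion, write $g = c\prod_{j}(x-b_j)$ with $c\in\overline{\K}^\times$ and $b_j\in\overline{\K}$ the roots of $g$. Since $\mu_{x-a}$ and $\mu$ are both valuations that agree on $\overline{\K}$, it suffices to compare $\mu_{x-a}(x-b_j)$ with $\mu(x-b_j)$ factor by factor, and likewise to evaluate $g(a)=c\prod_j(a-b_j)$ factorwise. The key numerical input is the hypothesis $\delta(g)<\delta(f)=\mu(x-a)$: by definition of $\delta(g)$ as the maximum of $\mu(x-b_j)$ over the roots $b_j$ of $g$, we get $\mu(x-b_j)<\mu(x-a)$ for every $j$. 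Hence, applying the ultrametric inequality to $x-b_j = (x-a)+(a-b_j)$, the strict inequality $\mu(x-b_j)<\mu(x-a)$ forces $\mu(a-b_j)=\mu(x-b_j)$. This simultaneously gives $\mu_{x-a}(x-b_j)=\min\{\mu(x-a),\mu(a-b_j)\}=\mu(a-b_j)=\mu(x-b_j)$ and identifies $\mu(x-b_j)$ with the valuation of the constant $a-b_j$, which is exactly the $j$-th factor of $g(a)$. Multiplying over $j$ and adding $\mu(c)$ yields $\mu_{x-a}(g)=\mu(g)=\mu(g(a))$ at once.

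For the residue statement, note that $\mu_{x-a}(g/g(a))=\mu_{x-a}(g)-\mu(g(a))=0$, so $g/g(a)$ lies in the valuation ring of $\mu_{x-a}$ and has a well-defined residue. To see this residue is $1$, observe that $g/g(a)-1 = (g-g(a))/g(a)$, and the polynomial $g(x)-g(a)$ is divisible by $x-a$ in $\overline{\K}[x]$, say $g(x)-g(a)=(x-a)h(x)$; then $\mu_{x-a}\bigl(g/g(a)-1\bigr)=\mu(x-a)+\mu_{x-a}(h)-\mu(g(a))$. It remains to check this quantity is strictly positive, i.e.\ that $\mu_{x-a}(h)>\mu(g(a))-\mu(x-a)$. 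Writing $h=c\prod_{j}\prod_{k\neq j}(x-b_k)$-type expansion is awkward, so instead I would argue directly: every monomial in the $(x-a)$-expansion of $g$ beyond the constant term $g(a)$ is divisible by $x-a$ and has $\mu_{x-a}$-value $\geq \mu(x-a)+\bigl(\text{value of that coefficient}\bigr)$; since $\mu_{x-a}(g)=\mu(g(a))$ equals the value of the constant term, every higher monomial has $\mu_{x-a}$-value $>\mu(g(a))$ strictly — here one uses that $\mu(x-a)>0$, which holds because $\mu(x-a)=\delta(f)>\delta(g)\geq\mu(\text{some constant})$... this is the one subtle point. Actually $\mu(x-a)$ need not be positive a priori, so the cleaner route is: the residue of $g/g(a)$ in $\mathcal{G}_{\mu_{x-a}}$ (or in the residue field) is computed from the leading form, and the leading form of $g$ with respect to $\mu_{x-a}$ is just the constant $g(a)$ precisely because all other $(x-a)$-monomials have strictly larger value — this last fact follows by the same factorwise computation above, since for the monomial $(x-a)$ itself we have $\mu_{x-a}(x-a)=\mu(x-a)=\delta(f)>\delta(g)$ and $\delta(g)\geq\mu_{x-a}(x-b_j)$ contributes the bound we need after reassembling; I would spell this out using that $\mu_{x-a}(g)=\sum_j\mu(a-b_j)+\mu(c)$ while any term involving at least one factor $(x-a)$ replaces some $\mu(a-b_j)$ by $\mu(x-a)>\mu(a-b_j)$.

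For the ``moreover'' part, let $Q\in\K[x]$ be a key polynomial for $\nu$ and $a$ an optimizing root of $Q$. By Theorem~\ref{teoparminimalpolichave}, $(a,\delta(Q))$ is a minimal pair for $\nu$; since the degree of $Q$ over $\K$ is minimal among elements realizing the value $\delta(Q)$, and since $\mu(x-a)=\delta(Q)$, Lemma~\ref{lemParDeDefinicaoCondicao} applied to the monomial valuation $\mu_{a,\delta(Q)}$ together with the preceding argument shows $\mu_{x-a}(Q)=\mu(Q)$: indeed $Q=\prod_{j}(x-b_j)$ over $\overline{\K}$ with each $b_j$ a root of $Q$, so $\mu(x-b_j)\leq\delta(Q)=\mu(x-a)$ for all $j$, and by Lemma~\ref{Lemaintermediario} we get $\mu_{x-a}(x-b_j)=\mu(x-b_j)$ for each $j$; multiplying gives the claim. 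The main obstacle is the residue identity $\bigl(g/g(a)\bigr)\mu_{x-a}=1$: one must be careful that $g(a)\neq 0$ (guaranteed since $\mu(g(a))=\mu(g)<\infty$ as $g\neq 0$ and $\mu$ restricted to $\overline{\K}$ has trivial support) and that the ``higher'' $(x-a)$-monomials genuinely have strictly larger $\mu_{x-a}$-value, which is where the strict inequality $\delta(g)<\delta(f)$ is essential rather than just $\leq$.
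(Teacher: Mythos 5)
Your proposal is correct and follows essentially the same route as the paper: reduce to the linear factors $x-b_j$ of $g$, use the ultrametric inequality together with $\mu(x-b_j)\leq\delta(g)<\delta(f)=\mu(x-a)$ (the content of Lemma~\ref{Lemaintermediario}) to get $\mu_{x-a}(x-b_j)=\mu(x-b_j)=\mu(a-b_j)$, and handle the ``moreover'' part exactly as in the paper by applying Lemma~\ref{Lemaintermediario} to each root of $Q$. The only cosmetic difference is in the residue identity, where the paper argues per factor that $\bigl((x-b_j)/(a-b_j)\bigr)\mu_{x-a}=1$ and multiplies residues, while you expand the product and compare the constant term with the higher $(x-a)$-monomials; your detour about whether $\mu(x-a)>0$ was unnecessary, since only $\mu(x-a)>\mu(a-b_j)$ is needed, as your final version correctly uses.
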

\begin{proof}
Since $\overline \K$ is algebraically closed, it is enough to show the first part for $g(x)=x-b$ with $c\in \overline \K$. It follows from Lemma \ref{Lemaintermediario} that
\[
\mu_{x-a}(x-b)=\mu(x-b)\leq \mu(a-b)=\mu(g(a)).
\]
This and the fact that
\[
\mu(a-b)\geq\min\{\mu(x-a),\mu(x-b)\}
\]
gives us that $\mu(g(a))=\mu(a-b)=\mu(x-b)=\mu(g)$.

Since
\[
\mu(g-g(a))=\mu(x-a)>\mu(x-b)=\mu(a-b)=\mu(g(a))
\]
we have that
\[
\mu_{x-a}\left(\frac{g-g(a)}{g(a)}\right)>0\mbox{ and hence }0=\left(\frac{g-g(a)}{g(a)}\right)\mu_{x-a}.
\]
Consequently,
\[
\left(\frac{g}{g(a)}\right)\mu_{x-a}=\left(\frac{g(a)}{g(a)}\right)\mu_{x-a}=1.
\]

Now let $Q$ be a key polynomial and $a$ an optimizing root of $Q$. Let $a=c_1,\ldots,c_r$ be all the roots of $Q$. Then for every $i$, $1\leq i\leq r$, we apply Lemma \ref{Lemaintermediario} to obtain
\[
\mu(x-c_i)=\mu_{x-a}(x-c_i).
\]
Hence,
\[
\mu_{x-a}(Q)=\sum_{i=1}^r\mu_Q(x-c_i)=\sum_{i=1}^r\mu(x-c_i)=\mu(Q).
\]

\end{proof}

\begin{Lema}\label{lemGrauMenorDeltaMenor} 
	Let $(a,\gamma)$ be a minimal pair for $\nu$ with $\gamma=\mu(x-a)$. For all $f\in \K[x]$ with $\deg(f)< [\K(a):\K]$ we have $\delta(f)<\gamma$.
\end{Lema}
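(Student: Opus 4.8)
The statement to prove is Lemma \ref{lemGrauMenorDeltaMenor}: if $(a,\gamma)$ is a minimal pair for $\nu$ with $\gamma=\mu(x-a)$, then every $f\in\K[x]$ with $\deg(f)<[\K(a):\K]$ satisfies $\delta(f)<\gamma$. The plan is to argue by contradiction, exploiting the definition of a minimal pair directly. So suppose there is some $f\in\K[x]$ with $\deg(f)<[\K(a):\K]$ but $\delta(f)\geq\gamma$. By definition of $\delta(f)$, there is a root $b\in\overline{\K}$ of $f$ with $\mu(x-b)=\delta(f)\geq\gamma$. The goal is then to produce from $b$ a witness contradicting minimality of the pair $(a,\gamma)$.

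The key step is to show $\mu(b-a)\geq\gamma$. This should follow from the identity $b-a=(x-a)-(x-b)$ together with $\mu(x-b)\geq\gamma=\mu(x-a)$ and property (V2): indeed $\mu(b-a)\geq\min\{\mu(x-a),\mu(x-b)\}\geq\gamma$. Once we have $\mu(b-a)\geq\gamma$, the definition of minimal pair for $(a,\gamma)$ forces $[\K(b):\K]\geq[\K(a):\K]$. On the other hand, $b$ is a root of $f\in\K[x]$ with $\deg(f)<[\K(a):\K]$, so the minimal polynomial of $b$ over $\K$ divides $f$ and hence $[\K(b):\K]\leq\deg(f)<[\K(a):\K]$. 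This contradicts $[\K(b):\K]\geq[\K(a):\K]$, completing the argument.

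I do not expect any serious obstacle here; the only point requiring a tiny bit of care is making sure that $\delta(f)$ is well-defined, i.e. that $f$ has a root $b$ with $\mu(x-b)\neq\infty$ — but this is guaranteed because $\delta(f)\geq\gamma$ is assumed (in particular $\delta(f)\neq-\infty$ and there is a root realizing the max), and if $\mu(x-b)=\infty$ for such a root $b$ then $x-b$ would lie in the support, which is inconsistent with $f$ being a nonzero polynomial of degree less than $[\K(a):\K]\leq\deg$ of any support generator (alternatively, one simply restricts to the roots with finite value, as in the definition of $\delta$). The whole proof is three lines once this is observed, so I would present it tersely: assume $\delta(f)\geq\gamma$, pick the optimizing root $b$ of $f$, deduce $\mu(b-a)\geq\gamma$ from (V2), invoke minimality to get $[\K(b):\K]\geq[\K(a):\K]$, and contradict this via $[\K(b):\K]\leq\deg(f)<[\K(a):\K]$.
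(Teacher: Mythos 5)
Your proof is correct and is essentially the paper's argument in contrapositive form: both hinge on the ultrametric inequality applied to the decomposition $b-a=(x-a)-(x-b)$ (the paper writes $x-b=(x-a)+(a-b)$), the definition of a minimal pair, and the bound $[\K(b):\K]\leq\deg(f)<[\K(a):\K]$ for a root $b$ of $f$. Your side remark on well-definedness is harmless: even if $\mu(x-b)=\infty$ the contradiction step goes through verbatim, since $\mu(b-a)\geq\min\{\gamma,\infty\}=\gamma$ still forces $[\K(b):\K]\geq[\K(a):\K]$.
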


\begin{proof}
	For every root $b\in \overline{\K}$ of $f$, we know that $[\K(b):\K]\leq \deg(f)<[\K(a):\K]$. By the definition of minimal pair, we conclude that $\mu(b-a)<\gamma=\mu(x-a)$. Suppose that $b\in \overline{\K}$ is an optimizing root of $f$. Then,
	$$\delta(f)=\mu(x-b)=\mu(x-a+(a-b))=\mu(a-b)<\gamma. $$
\end{proof}

\begin{Obs}\label{obsGrauMenor}
	Let $g\in \K[x]$  be such that $\deg(g)<\deg(Q)$, with $Q$  a key polynomial. Taking an optimizing root $a\in \overline{\K}$ of $Q$, Theorem~ \ref{teoparminimalpolichave} says that $(a,\delta(Q))$ is a minimal pair and $\deg(Q)=[\K(a):\K]$. Then, by Lemma~\ref{lemGrauMenorDeltaMenor}, $\delta(g)<\delta(Q)$ and, by  Lemma~\ref{lemMuXmenosaQigualMuQQ},  $\mu_{x-a}(g)=\mu(g)=\mu(g(a))$.
\end{Obs}
We have now the main ingredients to define our main setting.
\begin{equation}                           \label{sit}
\left\{\begin{array}{ll}
\K & \mbox{is a field}\\
\overline \K & \mbox{is an algebraic closure of }\K\\
\nu & \mbox{is a Krull valuation on }\K[x]\\
\mu & \mbox{is a valuation on }\overline \K[x]\mbox{ extending }\nu\\
Q&\mbox{is a key polynomial for }\nu\\
a&\mbox{is an optimizing root for }Q.
\end{array}\right.
\end{equation}

\section{On an equality of valuations given by truncation }\label{SecResultado1}
The main goal of this section is to prove the following theorem.
\begin{Teo}\label{teorandretromeno}
Suppose that we are in the situation \eqref{sit}. Then
\[
\mu_{x-a}|_{\K[x]}=\nu_Q.
\]
\end{Teo}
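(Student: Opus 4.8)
The plan is to prove the equality of the two valuations $\mu_{x-a}|_{\K[x]}$ and $\nu_Q$ by comparing their values on an arbitrary polynomial $f\in\K[x]$ through its $Q$-expansion $f = f_0 + f_1 Q + \ldots + f_s Q^s$, where each $\deg(f_i) < \deg(Q)$. Since $\nu_Q$ is already known to be a valuation (Proposition 2.6 of \cite{josneiKeyPolyMinimalPairs}, as $Q$ is a key polynomial) and $\mu_{x-a}$ is a valuation on $\overline\K[x]$, it suffices to handle each term $f_i Q^i$ and then argue that the minimum is attained and preserved. By definition, $\nu_Q(f) = \min_i\{\nu(f_i Q^i)\} = \min_i\{\nu(f_i) + i\nu(Q)\}$.

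First I would evaluate $\mu_{x-a}$ on each building block. By Remark~\ref{obsGrauMenor}, since $\deg(f_i) < \deg(Q) = [\K(a):\K]$, we have $\mu_{x-a}(f_i) = \mu(f_i) = \mu(f_i(a)) = \nu(f_i)$ (the last equality because $f_i\in\K[x]$ and $\mu$ extends $\nu$). By Lemma~\ref{lemMuXmenosaQigualMuQQ}, since $a$ is an optimizing root of the key polynomial $Q$, we have $\mu_{x-a}(Q) = \mu(Q) = \nu(Q)$. Therefore, using that $\mu_{x-a}$ is a valuation, $\mu_{x-a}(f_i Q^i) = \mu_{x-a}(f_i) + i\,\mu_{x-a}(Q) = \nu(f_i) + i\,\nu(Q) = \nu(f_i Q^i)$. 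So the two valuations agree term-by-term on the summands of the $Q$-expansion.

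The remaining point — and the one I expect to require the most care — is to pass from the termwise agreement to the agreement on the sum $f = \sum_i f_i Q^i$. The inequality $\mu_{x-a}(f) \geq \min_i\{\mu_{x-a}(f_i Q^i)\} = \nu_Q(f)$ is immediate from (V2). For the reverse inequality I need to show that $\mu_{x-a}$ does not increase the value, i.e., that the minimum among the $\mu_{x-a}(f_i Q^i)$ is actually attained by $\mu_{x-a}(f)$. The natural strategy is to show that the terms $f_i Q^i$ achieving the minimal $\mu_{x-a}$-value cannot all cancel each other in the associated graded sense; concretely, one shows that in the graded algebra of $\mu_{x-a}$ the images $\inv(Q)^i$ for distinct $i$ are linearly independent over the graded subalgebra generated by the images of the $f_i$'s — this is exactly the statement that $x-a$ (equivalently $Q$ after restriction) behaves like a key polynomial for $\mu_{x-a}$. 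Alternatively, and more in the elementary spirit of this paper, one can invoke that $\nu_Q = (\mu_{x-a})|_{\K[x]}$ would follow if one knew a priori that the $Q$-expansion is "$\mu_{x-a}$-compatible," which is precisely the content of $Q$ being a key polynomial for the truncated valuation; this can be extracted by combining Theorem~\ref{teoparminimalpolichave} (so that $(a,\delta(Q))$ is a minimal pair, hence a pair of definition for $\mu_{x-a}$ by construction) with the standard fact that for a minimal pair the $Q$-expansion computes the monomial valuation.

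Thus the skeleton is: (1) reduce to checking values on $Q$-expansions; (2) compute $\mu_{x-a}(f_i) = \nu(f_i)$ via Remark~\ref{obsGrauMenor} and $\mu_{x-a}(Q) = \nu(Q)$ via Lemma~\ref{lemMuXmenosaQigualMuQQ}, giving termwise equality; (3) establish that no cancellation occurs among minimal terms, using that $a$ is an optimizing root of a key polynomial so that $(a,\delta(Q))$ is a minimal pair of definition for $\mu_{x-a}$ (Theorem~\ref{teoparminimalpolichave}), whence $\mu_{x-a}(f) = \min_i\{\mu_{x-a}(f_i Q^i)\}$; and (4) conclude $\mu_{x-a}(f) = \nu_Q(f)$. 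I expect step (3) to be the genuine obstacle, since it is where the key-polynomial hypothesis does its real work — the earlier lemmas on minimal pairs and optimizing roots (Lemmas~\ref{lemParDeDefinicaoCondicao}, \ref{lemParesDefinemMesmaVal}, \ref{lemGrauMenorDeltaMenor}) are precisely the tools assembled to make this cancellation-freeness argument go through cleanly.
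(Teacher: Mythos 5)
Your steps (1) and (2) are correct and coincide with what the paper extracts from Remark~\ref{obsGrauMenor} and Lemma~\ref{lemMuXmenosaQigualMuQQ}: for the $Q$-expansion $f=\sum f_iQ^i$ one has $\mu_{x-a}(f_iQ^i)=\nu(f_iQ^i)$, and hence $\mu_{x-a}(f)\geq\nu_Q(f)$ by (V2). But your step (3) --- the reverse inequality, i.e.\ that no cancellation occurs among the minimal terms --- is the entire content of the theorem, and you do not prove it. Your first suggestion (linear independence of the powers $\inv_{x-a}(Q)^i$ in the graded algebra of $\mu_{x-a}$) is precisely the Bengus-Lasnier route that the paper explicitly contrasts with in Section~\ref{gradegalh} and deliberately avoids, and you give no argument for it. Your second suggestion is circular: ``the standard fact that for a minimal pair the $Q$-expansion computes the monomial valuation'' is exactly the Alexandru--Popescu--Zaharescu statement that Theorem~\ref{teorandretromeno} sets out to (re)prove in this framework, so it cannot be invoked. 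Note also that knowing that $Q$ ``behaves like a key polynomial'' for $\mu_{x-a}|_{\K[x]}$ is by itself weaker than what you need: being a key polynomial only guarantees that the corresponding truncation is a valuation, not that it equals $\mu_{x-a}|_{\K[x]}$. And Lemmas~\ref{lemParDeDefinicaoCondicao}, \ref{lemParesDefinemMesmaVal} and \ref{lemGrauMenorDeltaMenor} are nowhere near sufficient to rule out cancellation.

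For comparison, the paper fills this gap by splitting into two cases. If $\nu(Q)\notin\mu\overline\K$, then $\nu(Q)$ and $\delta(Q)$ are torsion-free over $\mu\overline\K$ and Lemma~\ref{lemMuQLivreTorcaoMuigualMQ} gives $\nu=\nu_Q$ and $\mu=\mu_{x-a}$ outright. If $\nu(Q)\in\mu\overline\K$, the cancellation-freeness is obtained indirectly: taking the least $e$ with $e\nu(Q)\in\mu\K(a)$ and $h$ of degree $<\deg(Q)$ with $\nu(h)=e\nu(Q)$, one proves that the residue of $r=Q^e/h$ is transcendental both for $\nu_Q$ (Proposition~\ref{lemrmuQtrans}) and for $\mu_{x-a}$ (Proposition~\ref{lemRMuXmenosAtrans}); the minimality of $e$ yields $\mu_{x-a}=\nu_Q$ on polynomials of degree $<ne$ (Lemma~\ref{lemigualQuandoGrauMenoren}), the transcendence of $r$ excludes cancellation in sums $\sum t_ir^i$ with $\deg(t_i)<ne$ (Lemmas~\ref{lemQuocientefgalgebrico} and \ref{lemigualEmKxrGrauMenorne}, Corollary~\ref{lemigualEmKr}), and finally every $f\in\K(x)$ is put in that shape because $\K(x)/\K(r)$ is algebraic of degree at most $ne$. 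None of this --- the case distinction, the element $r$, the two transcendence statements --- appears in your outline. If you really want to deduce the theorem from Theorem~\ref{teoparminimalpolichave}, the honest version of your idea would be to apply its ``moreover'' part to the valuation $\mu_{x-a}|_{\K[x]}$ with extension $\mu_{x-a}$ (after checking that $a$ is still an optimizing root of $Q$ for $\mu_{x-a}$ and that $(a,\delta(Q))$ is a minimal pair of definition for it), and then conclude with your termwise identities; but that makes the result a formal corollary of a cited theorem whose proof encapsulates the very computation this paper is reproving, so it does not supply the missing argument for step (3) in the spirit intended here.
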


We will divide the proof of Theorem \ref{teorandretromeno} in two cases: when $\nu(Q)\in \mu\overline{\K}$ and when $\nu(Q)\not\in \mu\overline{\K}$. We will prove some lemmas that will help us with the case $\nu(Q)\in \mu\overline{\K}$. In this case, we will present a transcendental element for $\K(x)\mu_Q\mid \K\nu$ and $\overline{\K}(x)\mu_{x-a}\mid \K\nu$

Our first result is a consequence of  Lemma 2.3 \textbf{(iii)}  from \cite{josneiKeyPolyPropriedades} applied to the valuation $\nu_Q$. 

\begin{Lema}\label{lemMQeProdhi}
In the situation \eqref{sit} we	have the following.
\begin{description}
\item[(i)] The polynomial $Q$ is a key polynomial for $\nu_Q$.
\item[(ii)] Take polynomials  $h_1,\ldots ,h_s\in \K[x]$  with $\deg(h_i)<\deg(Q)$ for every $i$, $1\leq i \leq s$. If
$$\prod_{i=1}^sh_i=lQ+ p$$
with $\deg(p)<\deg(Q)$, then
$$\nu_Q\left(\prod_{i=1}^sh_i\right)=\nu_Q(p)<\nu_Q(lQ). $$
\end{description}
\end{Lema}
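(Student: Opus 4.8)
The plan is to deduce both assertions from the cited result — Lemma 2.3(iii) of \cite{josneiKeyPolyPropriedades} — applied to the truncation $\nu_Q$. The key preliminary observation is that $\nu_Q$ is a valuation on $\K[x]$ (since $Q$ is a key polynomial for $\nu$, by Proposition 2.6 of \cite{josneiKeyPolyMinimalPairs}), and that $\nu$ and $\nu_Q$ agree on all polynomials of degree less than $\deg(Q)$, while $\nu_Q(Q)=\nu(Q)$. So every ingredient that enters the definition of $\epsilon$ and of ``key polynomial'' for degrees $\le\deg(Q)$ is the same whether computed with $\nu$ or with $\nu_Q$.

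For part (i), I would argue that $Q$ is a key polynomial for $\nu_Q$ by checking the defining implication in Definition \ref{defiPoliChave}. Since $\epsilon(f)$ for a polynomial $f$ of degree $<\deg(Q)$ depends only on the values of $f$ and its Hasse derivatives, all of which have degree $<\deg(Q)$, and since $\nu=\nu_Q$ on such polynomials, we get $\epsilon_\nu(f)=\epsilon_{\nu_Q}(f)$ for all $f$ with $\deg(f)<\deg(Q)$; in particular $\epsilon_\nu(Q')=\epsilon_{\nu_Q}(Q')$ for the relevant truncated pieces so that $\epsilon_{\nu_Q}(Q)$ makes sense and equals (or at least compares correctly with) $\epsilon_\nu(Q)$. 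Then for any $f$ with $\deg(f)<\deg(Q)$ the level $\epsilon_{\nu_Q}(f)=\epsilon_\nu(f)$ cannot reach $\epsilon_{\nu_Q}(Q)$, because $Q$ is key for $\nu$; hence the contrapositive of the implication in Definition \ref{defiPoliChave} holds, and $Q$ is a key polynomial for $\nu_Q$. (Alternatively, this is precisely the statement that truncating the truncation does nothing new, which is exactly the content one extracts from Lemma 2.3 of \cite{josneiKeyPolyPropriedades}.)

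For part (ii), the natural route is induction on $s$. For $s=1$ there is nothing to prove ($h_1=0\cdot Q+h_1$ with $\deg(h_1)<\deg(Q)$). For the inductive step, write $\prod_{i=1}^{s-1}h_i=l'Q+p'$ with $\deg(p')<\deg(Q)$, so by induction $\nu_Q(\prod_{i=1}^{s-1}h_i)=\nu_Q(p')<\nu_Q(l'Q)$; then multiply by $h_s$ and reduce $p'h_s$ modulo $Q$. Here the crucial input is the defining property of a key polynomial recorded in Lemma 2.3(iii) of \cite{josneiKeyPolyPropriedades}: for a product of two polynomials of degree $<\deg(Q)$, written as $h_sp'=\tilde l Q+\tilde p$ with $\deg(\tilde p)<\deg(Q)$, one has $\nu_Q(h_sp')=\nu_Q(\tilde p)<\nu_Q(\tilde l Q)$ — this is where being a key polynomial (equivalently, the multiplicativity of the associated graded piece in degree $<\deg Q$, or Hahn's lemma on non-cancellation of leading terms) is used. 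Combining $\nu_Q(h_s\prod_{i=1}^{s-1}h_i)=\nu_Q(h_s)+\nu_Q(p')$ with the $s=2$ case applied to $h_sp'$ and then recollecting the $Q$-expansion of $\prod_{i=1}^s h_i=(h_sl'+\tilde l)Q+\tilde p$ gives $\nu_Q(\prod h_i)=\nu_Q(\tilde p)$ and $\nu_Q(\tilde p)<\nu_Q((h_sl'+\tilde l)Q)$, which is the claim with $p=\tilde p$ and $l=h_sl'+\tilde l$.

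The step I expect to be the main obstacle is the base case $s=2$: showing that for $h_1,h_2$ of degree $<\deg(Q)$, if $h_1h_2=lQ+p$ with $\deg(p)<\deg(Q)$ then $\nu_Q(p)<\nu_Q(lQ)$. This is exactly the non-trivial content of ``$Q$ is a key polynomial'' and is what Lemma 2.3(iii) of \cite{josneiKeyPolyPropriedades} supplies; I would simply invoke it (applied to $\nu_Q$, which is legitimate by part (i)) rather than reprove it, since it encodes the fact that the image of $Q$ in the graded algebra of $\nu_Q$ is not a zero-divisor and that products of elements of smaller degree stay in smaller degree without their $\nu_Q$-value jumping up to the level of $lQ$. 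Everything else is bookkeeping with $Q$-expansions and the additivity of $\nu_Q$.
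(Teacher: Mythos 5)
Your proposal is correct and follows essentially the same route as the paper: part \textbf{(i)} by observing that $\nu$ and $\nu_Q$ agree on polynomials of degree $<\deg(Q)$ (and on $Q$ itself), so the $\epsilon$-values coincide and the key-polynomial property transfers to $\nu_Q$, and part \textbf{(ii)} by invoking Lemma 2.3~\textbf{(iii)} of \cite{josneiKeyPolyPropriedades} for the valuation $\nu_Q$. The only difference is that your induction on $s$ is superfluous, since that cited lemma is already stated for products of $s$ polynomials of degree $<\deg(Q)$, which is how the paper concludes directly.
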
 

\begin{proof}
In order to prove \textbf{(i)} we note that if $\deg(f)<\deg(Q)$, then $\deg(\partial_bf)\leq \deg(f)<\deg(Q)$ for every $b$, $1\leq b \leq \deg(f)$. Hence, 
	$$\epsilon_Q(f) := \underset{1\leq b \leq \deg(f)}{\max}\left\lbrace \frac{\nu_Q(f)-\nu_Q(\partial_bf)}{b}\right\rbrace = \underset{1\leq b \leq \deg(f)}{\max}\left\lbrace \frac{\nu(f)-\nu(\partial_bf)}{b}\right\rbrace=\epsilon(f).$$

By the same reasoning and by the definition of $\nu_Q$, we obtain $\epsilon_Q(Q)=\epsilon(Q)$. Since $Q$ is a key polynomial for $\nu$, we have $\epsilon(f)<\epsilon(Q)$, which is the same as $\epsilon_Q(f)<\epsilon_Q(Q)$. Then $Q$ is a key polynomial for $\nu_Q$.

The second item follows by applying Lemma 2.3 \textbf{(iii)} of \cite{josneiKeyPolyPropriedades} for $\nu_Q$.
\end{proof}

\begin{Lema}\label{lemMuQCoefProdGrauMenorQ}
	Let $Q$ be a key polynomial for $\nu$ and take 
	$$f=f_0+f_1Q+\ldots +f_sQ^s $$
	where each $f_i$ is $0$ or a product of polynomials of degree smaller than $n=\deg(Q)$. Then,
	$$\nu_Q(f)=\underset{0\leq i \leq s}{\min}\{ \nu(f_iQ^i) \}. $$
\end{Lema}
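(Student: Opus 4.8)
The plan is to reduce the statement to Lemma \ref{lemMQeProdhi}(ii) by an induction on $s$, treating each $f_i$ (which is a product of polynomials of degree $< n$) via that lemma. First I would note that for each $i$ with $f_i\neq 0$, writing $f_i = l_iQ + p_i$ with $\deg(p_i) < n$, Lemma \ref{lemMQeProdhi}(ii) gives $\nu_Q(f_i) = \nu(p_i) < \nu_Q(l_iQ)$, so in particular $\nu_Q(f_iQ^i) = \nu(p_iQ^i)$; but more importantly it tells us the ``true'' $Q$-expansion of $f_i$ has its minimum concentrated on the constant term $p_i$. The key observation is then that the polynomial $p_i$ (not $f_i$ itself) is a genuine $Q$-expansion coefficient of $f$ once we expand everything, and $\nu_Q$ is computed on honest $Q$-expansions.

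Concretely, I would argue as follows. Since each $f_i$ with $\deg(f_i)$ possibly $\geq n$ can be replaced by its remainder modulo $Q$, one collects $f = \sum_i f_i Q^i = \sum_i (l_i Q + p_i)Q^i = \sum_i p_i Q^i + \sum_i l_i Q^{i+1}$. Iterating this reduction (the total degree strictly drops at each step, or one inducts on $\max_i \deg(f_i)$) terminates in the honest $Q$-expansion $f = \sum_j g_j Q^j$ with $\deg(g_j) < n$. By definition $\nu_Q(f) = \min_j \nu(g_jQ^j)$. What must be shown is that this equals $\min_i \nu(f_iQ^i)$. The inequality $\nu_Q(f) \geq \min_i\nu(f_iQ^i)$ is automatic from (V2) applied to $f = \sum f_iQ^i$ together with $\nu_Q(f_iQ^i)\le \nu(f_iQ^i)$ — wait, that goes the wrong way, so instead I use $\nu_Q(f)\ge \min_i \nu_Q(f_iQ^i)$ and then $\nu_Q(f_iQ^i) = \nu(p_iQ^i)$; I need $\nu(p_iQ^i) \ge$ something. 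The cleaner route: by Lemma \ref{lemMQeProdhi}(ii), $\nu_Q(f_i) = \nu(p_i)$ and, crucially, $\nu(f_i) = \nu_Q(f_i)$ is \emph{not} claimed — rather $\nu_Q(f_i) = \nu(p_i) < \nu_Q(l_iQ)$; comparing with $\nu(f_i) = \min\{\nu(p_i),\nu(l_iQ)\}$ would need no cancellation, which we do not have. So the argument must stay entirely inside $\nu_Q$.

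Thus the real plan: induct on $s$. For $s=0$, $f = f_0$ and the claim is Lemma \ref{lemMQeProdhi}(ii) (with the degenerate case $\nu_Q(f_0) = \nu(f_0)$ when $\deg f_0 < n$ already). For the inductive step, write $f = f_0 + Qg$ where $g = f_1 + f_2Q + \cdots + f_sQ^{s-1}$ is of the same form with $s-1$ summands. By induction $\nu_Q(g) = \min_{1\le i\le s}\nu(f_iQ^{i-1})$, hence $\nu_Q(Qg) = \nu(Q) + \nu_Q(g) = \min_{1\le i\le s}\nu(f_iQ^i)$. Now I must compare $\nu_Q(f_0)$ with $\nu_Q(Qg)$: by Lemma \ref{lemMQeProdhi}(ii) applied to $f_0 = l_0Q + p_0$, the term $\nu_Q(f_0) = \nu(p_0)$ is \emph{strictly less} than $\nu_Q(l_0Q)$, which is a multiple-of-$Q$ contribution, while $Qg$ is divisible by $Q$; writing out the honest $Q$-expansion of $f = p_0 + Q(l_0 + g)$ shows the constant coefficient is exactly $p_0$, so $\nu_Q(f) = \min\{\nu(p_0), \nu(Q) + \nu_Q(l_0+g)\}$, and one checks $\nu_Q(l_0+g)\ge \min\{\nu_Q(l_0), \nu_Q(g)\}$ together with $\nu(Q)+\nu_Q(l_0) = \nu_Q(l_0Q) > \nu_Q(f_0) = \nu(p_0)$ to conclude $\nu_Q(f) = \min\{\nu(p_0Q^0), \nu(Q)+\nu_Q(g)\} = \min_{0\le i\le s}\nu(f_iQ^i)$, using $\nu(p_0) = \nu_Q(f_0) = \nu(f_0)$ since $\deg f_0 < n$ gives $\nu_Q(f_0) = \nu(f_0)$. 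The main obstacle is keeping the bookkeeping of the honest $Q$-expansion consistent through the induction — i.e., verifying that after the reduction modulo $Q$ the constant term really is $p_0$ and that no lower-valuation cancellation sneaks in among the higher $Q$-powers — but Lemma \ref{lemMQeProdhi}(ii) is precisely tailored to rule that out at each stage.
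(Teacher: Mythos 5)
Your inductive skeleton is workable and genuinely different from the paper's argument (the paper replaces every $f_i$ at once by its remainder $r_i$ modulo $Q$, sets $g=r_0+r_1Q+\ldots+r_sQ^s$, and shows $\nu_Q(f-g)>\nu_Q(g)$, whence $\nu_Q(f)=\nu_Q(g)=\min_i\nu(r_iQ^i)$), and your identity $\nu_Q(p_0+Qh)=\min\{\nu(p_0),\nu(Q)+\nu_Q(h)\}$ for $\deg(p_0)<n$, combined with the strict inequality from Lemma \ref{lemMQeProdhi} \textbf{(ii)}, does carry the inductive step. But one step is justified incorrectly, and it is exactly the step that converts your $\nu_Q$-computations into the $\nu$-values appearing in the statement: you close with ``$\nu(p_0)=\nu_Q(f_0)=\nu(f_0)$ since $\deg f_0<n$''. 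The hypothesis is only that $f_0$ is a \emph{product} of polynomials of degree $<n$; its own degree may exceed $n$ (this is the whole point of the lemma), so the cited reason is unavailable. The same bridge is already needed in your base case $s=0$, where the claim $\nu_Q(f_0)=\nu(f_0)$ is not literally Lemma \ref{lemMQeProdhi} \textbf{(ii)}, which compares only $\nu_Q$-values. Worse, in your second paragraph you persuaded yourself that this comparison ``would need no cancellation, which we do not have'', which contradicts your final step and is itself mistaken.

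The missing fact is true and has a one-line proof, in either of two ways. First way: $\nu\geq\nu_Q$ on $\K[x]$ (triangle inequality applied to the $Q$-expansion), so $\nu(l_0Q)\geq\nu_Q(l_0Q)>\nu_Q(p_0)=\nu(p_0)$ by Lemma \ref{lemMQeProdhi} \textbf{(ii)}, and hence $\nu(f_0)=\nu(l_0Q+p_0)=\nu(p_0)$; the strict inequality is precisely the ``no cancellation'' you thought you lacked. Second way: since $Q$ is a key polynomial, $\nu_Q$ is a valuation and agrees with $\nu$ on polynomials of degree $<n$, so multiplicativity gives $\nu_Q(f_0)=\sum_j\nu_Q(h_j)=\sum_j\nu(h_j)=\nu(f_0)$ for $f_0=\prod_j h_j$. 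With this inserted (in the base case and at the end of the inductive step, where it is also what turns $\min\{\nu(p_0),\nu(Q)+\nu_Q(g)\}$ into $\min_{0\leq i\leq s}\nu(f_iQ^i)$), your induction closes. Do also write out the short case analysis showing $\min\{\nu(p_0),\nu(Q)+\nu_Q(l_0+g)\}=\min\{\nu(p_0),\nu(Q)+\nu_Q(g)\}$: if $\nu(Q)+\nu_Q(g)\leq\nu(p_0)$ then $\nu_Q(g)<\nu_Q(l_0)$ and the two minima coincide, while if $\nu(Q)+\nu_Q(g)>\nu(p_0)$ then $\nu(Q)+\nu_Q(l_0+g)>\nu(p_0)$ as well; both cases use $\nu(Q)+\nu_Q(l_0)>\nu(p_0)$. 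As sketched this is correct but was only gestured at.
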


\begin{proof}For each $i$, $0\leq i \leq s$, we write $f_i=q_iQ+r_i$ with $r_i=0$ or $\deg(r_i)<n$.  By Lemma~\ref{lemMQeProdhi} we have that
	$$\nu_Q(f_i)=\nu(f)=\nu_Q(r_i)<\nu_Q(q_iQ). $$
	Set $g=r_0+r_1Q+\ldots +r_sQ^s$ and take $m\in \N$ such that $\nu_Q(g)=\nu(r_mQ^m).$ Then, for every $i$, we  conclude that
	$$\nu_Q(q_iQ^{i+1})>\nu(r_iQ^i)\geq \nu(r_mQ^m). $$
	Since 
	$$f-g = \sum_{i=0}^sq_iQ^{i+1}, $$
	we obtain that 
	$$\nu_Q(f-g)>\nu(r_mQ^m)=\nu(g)\geq\nu_Q(g). $$	
	
	Hence, $\nu_Q(f)=\nu_Q(g)$. Consequently,
	$$\nu_Q(f)\geq \underset{0\leq i \leq s}{\min}\{ \nu(f_iQ^i)  \} =  \underset{0\leq i \leq s}{\min}\{ \nu(r_iQ^i)  \} = \nu_Q(g)=\nu_Q(f). $$
	Therefore, the result follows.	
\end{proof}

\begin{Prop}\label{lemrmuQtrans}
Let $Q$ be a key polynomial for $\nu$ and suppose there exists $e\in \N$ such that $\nu(Q^e)=\nu(h)$, where $h\in \K[x]$ and $\deg(h)<\deg(Q)$. Let $r=\frac{Q^e}{h}$.  Then the residue of $r$ in $\K(x)\nu_Q$  is transcendental  over $\K\nu$. In particular, the residue of $r$ in $\K(x)\nu_Q$  is transcendental  over any algebraic extension of $\K\nu$ contained in $\K(x)\nu_Q$. 
\end{Prop}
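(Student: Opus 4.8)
The plan is to show that $r = Q^e/h$ satisfies no nontrivial algebraic relation over $\K\nu$ inside the residue field $\K(x)\nu_Q$. Suppose for contradiction that there are elements $\overline{c_0},\ldots,\overline{c_d}$ in $\K\nu$, not all zero, with $\sum_{j=0}^d \overline{c_j}\, r^j = 0$ in $\K(x)\nu_Q$. Lift each $\overline{c_j}$ to an element $c_j \in \K$ with $\nu(c_j) = 0$ (or $c_j = 0$ when $\overline{c_j} = 0$), so that the polynomial $P = \sum_{j=0}^d c_j Q^{ej} h^{d-j}$ is an element of $\K[x]$; the assumed relation says exactly that $\nu_Q(P) > \nu_Q(h^d Q^{ed})$ after clearing the denominator $h^d$, i.e. $\nu_Q(P) > \nu(h^d) + ed\,\nu(Q) = (d+1)\,\nu(h^d)\cdot\frac{1}{?}$ — more precisely $\nu_Q(P) > d\,\nu(h) + ed\,\nu(Q)$, using $\nu(Q^e) = \nu(h)$, which equals $2d\,\nu(h) \cdot \tfrac{1}{2}$... the clean statement is $\nu_Q(P) > e d\,\nu(Q) + d\,\nu(h)$.

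The key point is to compute $\nu_Q(P)$ directly from the expansion $P = \sum_{j=0}^d c_j h^{d-j} Q^{ej}$. First I would note that this is \emph{not} yet the $Q$-expansion of $P$, since each $c_j h^{d-j}$ is a product of polynomials of degree $< \deg(Q)$ but may itself have degree $\geq \deg(Q)$. This is precisely the situation handled by Lemma~\ref{lemMuQCoefProdGrauMenorQ}: it gives
\[
\nu_Q(P) = \min_{0\leq j\leq d}\{\nu(c_j h^{d-j} Q^{ej})\}.
\]
Now for each $j$ with $c_j \neq 0$ we have $\nu(c_j) = 0$, so $\nu(c_j h^{d-j} Q^{ej}) = (d-j)\nu(h) + ej\,\nu(Q) = (d-j)\nu(h) + j\,\nu(h) = d\,\nu(h)$, using $\nu(Q^e) = \nu(h)$ once more. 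Hence every term appearing in the minimum has the \emph{same} value $d\,\nu(h)$, so $\nu_Q(P) = d\,\nu(h) = e d\,\nu(Q) + d\,\nu(h) - ed\,\nu(Q)$; rewriting, $\nu_Q(P) = d\,\nu(h)$, while the relation forced $\nu_Q(P)$ to be strictly larger than $\nu_Q(h^d Q^{ed}) = d\,\nu(h) + ed\,\nu(Q) - ed\,\nu(Q) = d\nu(h)$ after one divides the residue-field identity back through by $r^d = Q^{ed}/h^d$ — so one must be careful about what the contradiction hypothesis literally says. The cleanest route: the residue $r\nu_Q$ being algebraic of degree $\le d$ means $\sum c_j r^j \in \MI_{\nu_Q}$ with some $c_j$ a unit, i.e. $\nu_Q(\sum_j c_j Q^{ej}h^{d-j}) > \nu_Q(Q^{ed}) = \nu(h^d) = d\,\nu(h)$. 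But we just computed that left side equals $d\,\nu(h)$, a contradiction. This proves $r\nu_Q$ is transcendental over $\K\nu$.

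For the ``in particular'' clause: any algebraic extension $L$ of $\K\nu$ contained in $\K(x)\nu_Q$ consists of elements algebraic over $\K\nu$; if $r\nu_Q$ were algebraic over $L$ it would be algebraic over $\K\nu$ by transitivity of algebraicity, contradicting what we just proved. So $r\nu_Q$ is transcendental over every such $L$ as well.

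The main obstacle is the bookkeeping in the first displayed identity: one must resist treating $\sum_j c_j h^{d-j}Q^{ej}$ as a $Q$-expansion and instead invoke Lemma~\ref{lemMuQCoefProdGrauMenorQ} to get the minimum formula, and then verify that every term in that minimum has the common value $d\,\nu(h)$ (this is where the hypothesis $\nu(Q^e)=\nu(h)$ and the normalization $\nu(c_j)=0$ are both used). Everything else is a short chase through the definitions of residue field and the relation ``$r\nu_Q$ algebraic''.
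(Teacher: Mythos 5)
Your argument is correct and is essentially the paper's own proof: lift the coefficients to elements of $\K$ of value $0$, clear the denominator $h^d$, apply Lemma~\ref{lemMuQCoefProdGrauMenorQ} to $P=\sum_{j}c_jh^{d-j}Q^{ej}$ to conclude $\nu_Q(P)=d\,\nu(h)$, and contradict the inequality $\nu_Q(P)>\nu_Q(h^d)=d\,\nu(h)$ forced by the residue relation (the paper phrases the same contradiction as violating the minimum formula of that lemma). The only cleanup needed is to strike the abandoned mis-statements of that threshold in your second paragraph (the bounds $\nu_Q(h^dQ^{ed})$ and $ed\,\nu(Q)+d\,\nu(h)$, both equal to $2d\,\nu(h)$, are not what the relation gives); your final ``cleanest route'' formulation $\nu_Q(P)>d\,\nu(h)$ is the correct one and matches the paper.
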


\begin{proof}
	We immediately see that  $\nu_Q(r)=0$. Suppose there are  $b_i\in \K$ such that $\nu(b_i)\geq 0$ for all $i$ and
	$$\sum_{i=0}^{s}(b_i\nu_Q)(r\nu_Q)^i=\sum_{i=0}^{s}\left(b_i\frac{Q^{ei}}{h^i}\right)\nu_Q=0. $$
	
	Then
	$$0=\sum_{i=0}^{s}\left(b_i\frac{Q^{ei}}{h^i}\right)\nu_Q=\left(\frac{h^sb_0+h^{s-1}b_1Q^e+\ldots +b_sQ^{es}}{h^s}\right)\nu_Q. $$
	
	Hence,
	$$\nu_Q(h^sb_0+h^{s-1}b_1Q^e+\ldots +b_sQ^{es})>\nu_Q(h^s). $$
	
	Suppose, aiming a contradiction, that there exists $j$ such that $b_j\nu_Q\neq 0$. Thus $\nu(b_j)=0$. Since
	$$\nu_Q(h^s)=se\nu(Q)  $$ 
	and
	\begin{align*}
	\nu_Q(h^{s-i}b_iQ^{ei}) &= (s-i)\nu_Q(h)+\nu_Q(b_i)+ei\nu_Q(Q)\\
	& = (s-i)e\nu(Q)+\nu_Q(b_i)+ei\nu(Q)\\
	& = \nu_Q(b_i)+se\nu(Q) \geq se\nu(Q),
	\end{align*}
we would have
	$$\nu_Q(h^s)=\underset{0\leq i \leq s}{\min}\{ \nu_Q(h^{s-i}b_iQ^{ei}) \} = \underset{0\leq i \leq s}{\min}\{ \nu(h^{s-i}b_iQ^{ei}) \}. $$
	
	Hence,
	$$\nu_Q(h^sb_0+h^{s-1}b_1Q^e+\ldots +b_sQ^{es})> \underset{0\leq i \leq s}{\min}\{ \nu(h^{s-i}b_iQ^{ei}) \}. $$
	
	This contradicts Lemma~\ref{lemMuQCoefProdGrauMenorQ}. Therefore, we must have $b_i\nu_Q=0$ for all $i$.  This means that $r\nu_Q\in \K(x)\nu_Q$ is transcendental over $\K\nu$.
\end{proof}

In order to prove that the residue of $r=Q^e/h$ in $\overline{\K}(x)\mu_{x-a}$  is also transcendental over $\overline\K\nu$ (Lemma~\ref{lemRMuXmenosAtrans}) we will use the following lemmas.

\begin{Lema}\label{lemMuQeDeltaQ}
	Let $a\in \overline{\K}$ be an optimizing root  of a polynomial $f\in \K[x]$. We have $\nu(f)\in \mu\overline{\K}$ if and only if $\delta(f)\in \mu\overline{\K}$.
\end{Lema}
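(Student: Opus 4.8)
The plan is to write $f = c\prod_{i=1}^r (x-c_i)$ with $c\in\K$ the leading coefficient (here $f$ is not assumed monic, but $c\in\K$ so $\mu(c)\in\nu\K\subseteq\mu\overline\K$), where $c_1,\dots,c_r\in\overline\K$ are the roots of $f$ and $a = c_1$ is the chosen optimizing root. Then $\nu(f) = \mu(f) = \mu(c) + \sum_{i=1}^r \mu(x-c_i)$. The key point is that $\mu(x-c_i)$ compares to $\delta(f) = \mu(x-a)$ via Lemma \ref{Lemaintermediario}: since $a$ is optimizing, $\mu(a-c_i)\le\mu(x-a)$ is \emph{not} automatic, but in fact for each $i$ we do have $\mu(x-c_i)\le\mu(x-a)=\delta(f)$ by definition of $\delta(f)$ as the maximum; I will use this together with the ultrametric inequality to control each $\mu(x-c_i)$ modulo $\mu\overline\K$.

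First I would establish that for every root $c_i$, the value $\mu(x-c_i)$ lies in the coset $\delta(f) + \mu\overline\K$ or in $\mu\overline\K$ itself. Concretely: if $\mu(x-c_i) < \delta(f) = \mu(x-a)$, then since $\mu(a-c_i)\ge\min\{\mu(x-a),\mu(x-c_i)\} = \mu(x-c_i)$ and also (by the strict inequality) $\mu(a-c_i) = \mu((x-c_i)-(x-a)) = \mu(x-c_i)$, so $\mu(x-c_i) = \mu(a-c_i)\in\mu\overline\K$. If instead $\mu(x-c_i) = \delta(f)$, that term contributes exactly $\delta(f)$. Hence, grouping the roots into the set $A$ of those with $\mu(x-c_i)=\delta(f)$ (note $a\in A$, so $|A|\ge 1$) and the set $B$ of those with $\mu(x-c_i)\in\mu\overline\K$, we get
\[
\nu(f) = \mu(c) + \sum_{i\in B}\mu(x-c_i) + |A|\,\delta(f) = \beta + |A|\,\delta(f),
\]
where $\beta := \mu(c) + \sum_{i\in B}\mu(x-c_i)\in\mu\overline\K$ and $|A|\ge 1$.

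From this identity both implications follow quickly. If $\delta(f)\in\mu\overline\K$, then $\nu(f) = \beta + |A|\delta(f)\in\mu\overline\K$ since $\mu\overline\K$ is a group. Conversely, if $\nu(f)\in\mu\overline\K$, then $|A|\,\delta(f) = \nu(f) - \beta\in\mu\overline\K$; and since $\mu\overline\K / \nu\K$ — more to the point, since we need $\delta(f)$ itself in $\mu\overline\K$, I use that $|A|\delta(f)\in\mu\overline\K$ forces $\delta(f)$ to be torsion over $\mu\overline\K$, but actually $\mu\overline\K$ is divisible (as $\overline\K$ is algebraically closed, its value group is divisible), so $|A|\delta(f)\in\mu\overline\K$ implies $\delta(f)\in\mu\overline\K$. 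The step I expect to be the main obstacle is precisely this last point: extracting $\delta(f)\in\mu\overline\K$ from $|A|\delta(f)\in\mu\overline\K$ when $|A|>1$. The clean way out is the divisibility of $\mu\overline\K$; if one prefers to avoid it, one can instead argue that $\delta(f) = \mu(x-a)$ and $a$ is algebraic over $\K$, so there is a conjugate-averaging or resultant argument showing $\mu(x-a)$ itself is commensurable with $\nu\K$ iff $\nu(f)$ is — but the divisibility argument is shortest and I would use it.
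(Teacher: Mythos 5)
Your proof is correct and follows essentially the same route as the paper: factor $f$ into linear factors, split the roots according to whether $\mu(x-c_i)$ equals $\delta(f)$ or is strictly smaller (the latter values lying in $\mu\overline{\K}$ by the ultrametric argument, which is exactly what Lemma~\ref{lemMuXmenosaQigualMuQQ} records), and then use divisibility of $\mu\overline{\K}$ to pass from $|A|\,\delta(f)\in\mu\overline{\K}$ to $\delta(f)\in\mu\overline{\K}$. Your explicit handling of the leading coefficient is a minor (harmless) refinement; no gap.
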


\begin{proof}
	We write 
	$$f(x) = \prod_{i=1}^{k}(x-a_i)\prod_{j=k+1}^{n}(x-a_i) $$
with $a=a_1$ and 
	$$\delta(Q)=\mu(x-a_1)=\ldots = \mu(x-a_k)>\mu(x-a_i) $$
for all $j$ with $k+1\leq j \leq n$. Then, $\delta(f)>\delta(x-a_j)$ and, by Lemma~\ref{lemMuXmenosaQigualMuQQ}, $\mu(x-a_j)=\mu(a-a_j)\in\mu\overline{\K}$ for all $j$ with $k+1\leq j \leq n$. Thus, since
	$$\nu(f)=\mu(f)=k\delta(f)+\sum_{j=k+1}^n\mu(x-a_j) $$
we see that if $\delta(f)\in \mu\overline{\K}$, then $\nu(f)\in \mu\overline{\K}$. 
	
	On the other hand, if $\nu(f)\in \mu\overline{\K}$, then $k\delta(f)\in  \mu\overline{\K}$. Since $\mu\overline{\K}$ is a divisible group, we obtain that $\delta(f)\in \mu\overline{\K}$. 
\end{proof}

\begin{Lema}\label{lemresiduosMuXmenosB}
Suppose that we are in the situation \eqref{sit} and that $\nu(Q)\in \mu\overline{\K}$. Let $d_a\in \overline{\K}$ be such that $\mu(x-a)=\mu(d_a)\in \mu\overline{\K}$.  For $b\in \overline{\K}$, suppose that there exists $d_b\in\overline{\K}$ such that $\mu(x-b)=\mu(d_b)$. Then we have the following.	
\begin{description}
\item[(i)] The element $y=\dfrac{x-a}{d_a}\mu_{x-a}\in \overline{\K}(x)\mu_{x-a}$ is transcendental over $\overline{\K}\mu$.
\item[(ii)] If $\mu(x-b)\leq\mu(x-a)$, then
\[
\mu_{x-a}\left(\dfrac{x-b}{d_b}\right)\geq 0\mbox{ and }\dfrac{x-b}{d_b}\mu_{x-a}\in \overline{\K}\mu[y].
\]
\end{description}
\end{Lema}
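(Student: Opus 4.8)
The plan is to treat the two parts in sequence, using the monomial structure of $\mu_{x-a}$ throughout. For part \textbf{(i)}, I would argue exactly as in the proof of Proposition \ref{lemrmuQtrans}, but now working over $\overline{\K}$ and with the linear polynomial $x-a$ in place of $Q^e/h$. Since $\mu_{x-a}$ is by definition the monomial valuation with respect to $x-a$ obtained from $\delta=\mu(x-a)$ and $\mu|_{\overline{\K}}$, a polynomial $\sum_i c_i(x-a)^i$ has value $\min_i\{\mu(c_i)+i\delta\}$, and because $\delta=\mu(d_a)$ lies in $\mu\overline{\K}$ all these values are commensurable. The point is that, after writing $y=(x-a)/d_a$, one has $\mu_{x-a}(y)=0$, and a putative algebraic relation $\sum_{i=0}^s (c_i\mu)y^i=0$ with $\mu(c_i)\geq 0$ would, upon clearing denominators, force the $(x-a)$-expansion of $\sum_i c_i d_a^{\,s-i}(x-a)^i$ to have value strictly exceeding the minimum of the term values; but since the $(x-a)$-expansion is already exhibited, the value of the sum is \emph{exactly} that minimum, a contradiction unless every $c_i\mu=0$. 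So $y$ is transcendental over $\overline{\K}\mu=\overline{\K}(x)\mu|_{\overline\K}$. This is the clean, purely computational core and I expect no real obstacle here — it is the graded-algebra fact that powers of a monomial generator are algebraically independent over the base residue field.

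For part \textbf{(ii)}, the hypothesis $\mu(x-b)\leq\mu(x-a)$ is the key input. I would first observe that $x-b=(x-a)+(a-b)$, so $\mu_{x-a}(x-b)=\min\{\delta,\mu(a-b)\}$; combined with $\mu(x-b)\leq\delta$ and Lemma \ref{Lemaintermediario} (whose hypothesis $\mu(a-b)\geq\mu(x-a)$ need not hold, so I should instead argue directly: if $\mu(x-b)<\delta$ then $\mu(a-b)=\mu(x-b)$, and if $\mu(x-b)=\delta$ then $\mu(a-b)\geq\delta$), one gets $\mu_{x-a}(x-b)=\mu(x-b)=\mu(d_b)$ in either case. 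Hence $\mu_{x-a}\big((x-b)/d_b\big)=0$, giving the first assertion. For the second, I distinguish the same two cases. If $\mu(a-b)\geq\delta$ then $\mu_{x-a}(x-b)=\delta$, and $(x-b)/d_b=\big((x-a)+(a-b)\big)/d_b$; writing this as $(d_a/d_b)\,y+(a-b)/d_b$, both coefficients have nonnegative $\mu$-value (indeed $\mu(d_a/d_b)=\delta-\mu(x-b)$ and $\mu((a-b)/d_b)=\mu(a-b)-\mu(x-b)$, both $\geq 0$ since $\mu(x-b)=\mu(d_b)=\min\{\delta,\mu(a-b)\}$ in this case is $\delta\le\mu(a-b)$... wait, here $\mu(x-b)=\delta$), so the residue is $(d_a/d_b)\mu\cdot y+((a-b)/d_b)\mu\in\overline{\K}\mu[y]$. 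If instead $\mu(a-b)<\delta$, then $\mu(x-b)=\mu(a-b)=\mu(d_b)$, and $(x-a)/d_b$ has value $\delta-\mu(a-b)>0$, so $\big((x-a)/d_b\big)\mu_{x-a}=0$; therefore $(x-b)/d_b\,\mu_{x-a}=\big((a-b)/d_b\big)\mu\in\overline{\K}\mu\subseteq\overline{\K}\mu[y]$.

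The only subtlety I anticipate is bookkeeping the value of $d_b$ correctly: since $\mu(x-b)=\mu(d_b)$ and $\mu(x-b)=\min\{\delta,\mu(a-b)\}$ by the linear expansion, the two cases above correspond precisely to $\mu(d_b)=\delta$ and $\mu(d_b)=\mu(a-b)<\delta$, and in each case every coefficient appearing when one rewrites $(x-b)/d_b$ in terms of $y$ and constants from $\overline\K$ has $\mu$-value $\geq 0$, so reduction modulo $\MI_{\mu_{x-a}}$ lands in $\overline{\K}\mu[y]$. I would present part \textbf{(i)} first since part \textbf{(ii)} only needs the ring $\overline{\K}\mu[y]$ to make sense, and then dispatch \textbf{(ii)} by the two-case linear computation just sketched.
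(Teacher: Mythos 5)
Your proof is correct and follows essentially the same route as the paper: part \textbf{(i)} is Proposition~\ref{lemrmuQtrans} applied to $\mu$ and its key polynomial $x-a$ (your direct monomial-valuation computation just unwinds that proof), and part \textbf{(ii)} uses the same decomposition $\frac{x-b}{d_b}=\frac{d_a}{d_b}\cdot\frac{x-a}{d_a}+\frac{a-b}{d_b}$ followed by taking residues in the valuation ring of $\mu_{x-a}$. Your explicit two-case analysis of $\mu(a-b)$ versus $\mu(x-a)$ replaces the paper's appeal to Lemma~\ref{Lemaintermediario}, a reasonable precaution since that lemma's stated hypothesis need not hold here, although its proof covers exactly the case $\mu(x-b)\leq\mu(x-a)$ that is needed.
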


\begin{proof}
The item \textbf{(i)} follows from Proposition~\ref{lemrmuQtrans} applied to $\mu$ and its key polynomial $x-a$.

In order to prove \textbf{(ii)} suppose $\mu(x-b)\leq\mu(x-a)$. By Lemma \ref{Lemaintermediario} we obtain that
\[
\mu(d_b)=\mu(x-b)=\mu_{x-a}(x-b)\leq\mu_{x-a}(x-a).
\]
Therefore,
\begin{displaymath}
\begin{array}{rcl}
\displaystyle\frac{x-b}{d_b}\mu_{x-a} &=& \displaystyle\frac{x-a+a-b}{d_b}\mu_{x-a}= \frac{x-a}{d_b}\mu_{x-a}+ \frac{a-b}{d_b}\mu_{x-a}\\[10pt]
&=&\displaystyle y\frac{d_a}{d_b}\mu_{x-a}+\frac{a-b}{d_b}\mu_{x-a}\in \overline{\K}\mu[y]
\end{array}
\end{displaymath}
\end{proof}

\begin{Obs}
In \textbf{(ii)} of the above Lemma, if $\mu(x-b)=\mu(x-a)$, then
\[
\deg_y\left(\displaystyle\frac{x-b}{d_b}\mu_{x-a}\right)=1.
\]
On the other hand, if $\mu(x-b)<\mu(x-a)$, then
\[
\deg_y\left(\displaystyle\frac{x-b}{d_b}\mu_{x-a}\right)=0.
\]
\end{Obs}
\begin{Prop}\label{lemRMuXmenosAtrans}
Suppose that we are in the situation \eqref{sit} and that there exists $e\in \N$ with $\nu(Q^e)=\nu(h)$, for $h\in \K[x]$ and $\deg(h)<\deg(Q)$. Let $r=\frac{Q^e}{h}$.  Then the residue of 
$r$  in $\overline{\K}(x)\mu_{x-a}$
is transcendental over $\overline{\K}\mu$. 
\end{Prop}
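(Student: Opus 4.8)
The plan is to reduce the transcendence of $r\mu_{x-a}$ over $\overline{\K}\mu$ to the transcendence of the element $y=\frac{x-a}{d_a}\mu_{x-a}$ established in Lemma~\ref{lemresiduosMuXmenosB}(i). The idea is that $r=Q^e/h$ is, up to a unit in $\overline{\K}$, a rational expression in the linear factors $x-c_i$ of $Q$ and $h$, and each such linear factor, after normalizing by a suitable $d_{c_i}$, has residue lying in $\overline{\K}\mu[y]$ by Lemma~\ref{lemresiduosMuXmenosB}(ii). So the residue of $r$ should be, up to multiplication by a nonzero element of $\overline{\K}\mu$, a ratio of two polynomials in $y$ with coefficients in $\overline{\K}\mu$; one then argues this ratio is nonconstant (hence $y$, and therefore $r\mu_{x-a}$, is transcendental) by a degree count in $y$.

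In more detail, first I would factor $Q(x)=\prod_{i=1}^n(x-c_i)$ with $a=c_1$ an optimizing root, so $\mu(x-c_i)\leq\mu(x-a)$ for all $i$; note that $\nu(Q)\in\mu\overline{\K}$ by hypothesis, so by Lemma~\ref{lemMuQeDeltaQ} we have $\delta(Q)=\mu(x-a)\in\mu\overline{\K}$, i.e.\ a suitable $d_a$ exists, and moreover each $\mu(x-c_i)=\mu(c_1-c_i)\in\mu\overline{\K}$ for $i$ with $\mu(x-c_i)<\mu(x-a)$ by Lemma~\ref{lemMuXmenosaQigualMuQQ} (while for those $c_i$ with $\mu(x-c_i)=\mu(x-a)$ we may take $d_{c_i}=d_a$). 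Thus every linear factor $x-c_i$ of $Q$ admits a $d_{c_i}\in\overline{\K}$ with $\mu(x-c_i)=\mu(d_{c_i})$. Similarly, since $\deg h<\deg Q$ and $\nu(h)=\nu(Q^e)=e\nu(Q)\in\mu\overline{\K}$, writing $h=u\prod_j(x-b_j)$ over $\overline{\K}$ and using $\delta(h)<\delta(Q)$ together with Lemma~\ref{lemMuXmenosaQigualMuQQ}, each $\mu(x-b_j)=\mu(a-b_j)<\mu(x-a)$ lies in $\mu\overline{\K}$, so each $x-b_j$ also admits such a $d_{b_j}$. Writing $D_Q=\prod_i d_{c_i}$ and $D_h=u\prod_j d_{b_j}$, we have $\mu(D_Q)=\nu(Q)$, $\mu(D_h)=\nu(h)=e\nu(Q)$, hence $\mu(D_Q^e/D_h)=0$ and
\[
r=\frac{Q^e}{h}=\frac{D_Q^e}{D_h}\cdot\prod_i\Bigl(\frac{x-c_i}{d_{c_i}}\Bigr)^{\!e}\Big/\prod_j\frac{x-b_j}{d_{b_j}}.
\]
Applying the residue map $\mu_{x-a}$, the factor $D_Q^e/D_h$ contributes a nonzero element of $\overline{\K}\mu$, and by Lemma~\ref{lemresiduosMuXmenosB}(ii) the numerator becomes a polynomial $P(y)\in\overline{\K}\mu[y]$ of $y$-degree $ek$ (where $k$ is the number of $c_i$ with $\mu(x-c_i)=\mu(x-a)$, $k\geq 1$) and the denominator becomes a polynomial $R(y)\in\overline{\K}\mu[y]$ of $y$-degree $0$ (since every $b_j$ satisfies $\mu(x-b_j)<\mu(x-a)$), using the Remark after Lemma~\ref{lemresiduosMuXmenosB}. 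Hence $r\mu_{x-a}=c\,P(y)/R(y)$ with $c\in\overline{\K}\mu\setminus\{0\}$ and $\deg_y P=ek\geq 1$, $\deg_y R=0$, so $r\mu_{x-a}$ is a nonconstant polynomial in $y$; since $y$ is transcendental over $\overline{\K}\mu$, so is $r\mu_{x-a}$.

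The main obstacle I expect is bookkeeping the residues carefully: one must check that the denominator $R(y)$ is genuinely a nonzero constant in $y$ (i.e.\ that no cancellation forces it to $0$ in $\overline{\K}\mu$), which follows because $\mu_{x-a}(h)=\nu(h)=\mu(D_h)$ so $h/D_h$ has residue a nonzero element of $\overline{\K}\mu$ — this is exactly the content of the ``$(g/g(a))\mu_{x-a}=1$'' clause of Lemma~\ref{lemMuXmenosaQigualMuQQ} applied factor-by-factor — and likewise that the leading term in $y$ of the numerator does not vanish, which follows from the same lemma applied to those factors $x-c_i$ with $\mu(x-c_i)=\mu(x-a)$. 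Once these non-vanishing facts are in hand, the degree argument is immediate. An alternative, perhaps cleaner, route that avoids factoring over $\overline{\K}$ is to observe directly that $\mu_{x-a}|_{\K[x]}$ and $\nu_Q$ agree on $\K[x]$ — but that is precisely Theorem~\ref{teorandretromeno}, whose proof this proposition feeds into, so to avoid circularity I would stick with the explicit factorization argument above, which only uses the lemmas already proved.
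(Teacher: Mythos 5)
Your proof is correct and takes essentially the same route as the paper's: factor $Q$ into linear factors, normalize each factor by a constant of equal value (using Lemma~\ref{lemMuQeDeltaQ} for the roots at the maximal value and Lemma~\ref{lemMuXmenosaQigualMuQQ} for the others), and apply Lemma~\ref{lemresiduosMuXmenosB} to conclude that $r\mu_{x-a}$ is a nonconstant polynomial in the transcendental element $y$. The only cosmetic differences are that you also factor $h$ over $\overline{\K}$ (the paper keeps $h$ intact and uses $(h/h(a))\mu_{x-a}=1$ from Lemma~\ref{lemMuXmenosaQigualMuQQ}) and that you obtain nonconstancy by a $y$-degree count rather than by noting that $y$ divides the nonzero element $r\mu_{x-a}$.
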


\begin{proof}
By Lemma~\ref{lemMuXmenosaQigualMuQQ} we can see that $\mu_{x-a}(r)=0$. Let $a=a_1,\ldots, a_r\in\overline \K$ be all the roots of $Q$. If $\mu(x-a_i)<\delta(Q)$, then by Lemma~\ref{lemMuXmenosaQigualMuQQ} we have
\[
\mu_{x-a}(x-a_i) = \mu(d_i)\mbox{ for }d_i:=a-a_i.
\]
On the other hand, if $\mu(x-a_i)=\delta(Q)$, then by Lemma~\ref{lemMuQeDeltaQ} (and our assuption that $\mu(Q)\in\mu\overline \K$) we have $\mu(x-a_i)=\delta(Q)\in \mu\overline{\K}$. Hence, there exists $d_i\in \overline{\K}$ such that $\mu(x-a_i)=\mu(d_i)$.

Set $y=\dfrac{x-a}{d_1}\mu_{x-a}$. We have that
\begin{equation}\label{equaparocaio1}
\left(\frac{Q^e}{h}\right)\mu_{x-a}=y^e\frac{d_1^e\cdots d_r^e}{h}\mu_{x-a}\prod_{i=2}^r\left(\frac{x-a_i}{d_i}\right)^e\mu_{x-a}.
\end{equation}
By Lemma~\ref{lemMuXmenosaQigualMuQQ} we have that
\begin{equation}\label{equaparocaio2}
\frac{d_1^e\cdots d_r^e}{h}\mu_{x-a}=\frac{d_1^e\cdots d_r^e}{h(a)}\mu\in\overline\K\mu.
\end{equation}
Also, by Lemma \ref{lemresiduosMuXmenosB} \textbf{(ii)}, for each $i$, $2\leq i\leq r$, we have
\begin{equation}\label{equaparocaio3}
\frac{x-a_i}{d_i}\mu_{x-a}\in \overline \K\mu[y].
\end{equation} 

By \eqref{equaparocaio1}, \eqref{equaparocaio2} and \eqref{equaparocaio3}, we obtain that $r\mu_{x-a}=yp(y)$ for some $p(y)\in\overline \K\mu[y]$. Since $y$ is transcendental over $\overline{\K}\mu$ (Lemma \ref{lemresiduosMuXmenosB} \textbf{(i)}) we conclude that $r\mu_{x-a}$ is also transcendental over $\overline{\K}\mu$. In particular, $r\mu_{x-a}$ is transcendental over $\K(a)\mu$ and over $\K\nu$.
\end{proof}

Since  $\K(a)$ is a simple extension of  $\K$ obtained by adjoining  $a$, all elements of this field extension have the form $g(a)$, where $g\in \K[x]$ and $\deg(g)<\deg(Q)=[\K(a):\K]$. Then, the map
\[
\K(a)\mu\lra\K[x]\nu_Q\mbox{ given by }f(a)\mu\longmapsto f(x)\nu_Q
\]
is a ring homormosphism because of Lemma \ref{lemMQeProdhi} \textbf{(ii)}. Moreover, it is injective by definition. Hence we will consider the embedding $\K(a)\mu\lra \K[x]\nu_Q$.

If we assume that $\nu(Q)\in \mu\overline{\K}$, since $\mu\overline \K$ is the divisible hull of $\nu \K$, there exists $e\in \N$ such that $e\nu(Q)\in \mu\K(a)$. We can take this positive integer as the least with this property. Thus, there exists $h\in \K[x]$ such that $\deg(h)<\deg(Q)$ and
$$\nu_Q(h)=\nu(h)=\mu(h(a)) = e\nu(Q). $$

\begin{Cor}\label{corRMuQeRMuXmenosAtrans}
	Let $Q$ be a key polynomial for $\nu$ and suppose $\nu(Q)\in \mu\overline{\K}$. Take $e\in \N$ the least positive integer such that $e\nu(Q)\in \mu\K(a)$. Choose $h\in \K[x]$ with $\deg(h)<\deg(Q)$ and such that $e\nu(Q)=\mu(h(a))$. Let $r=\frac{Q^e}{h}$. Then the elements 
	$$r\nu_{Q}\in \K(x)\nu_Q \text{ and } r\mu_{x-a}\in  \overline{\K}(x)\mu_{x-a}$$
	are transcendental over $\K(a)\mu$. 
\end{Cor}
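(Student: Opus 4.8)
The plan is to combine the two transcendence results we have already established --- Proposition~\ref{lemrmuQtrans} (giving $r\nu_Q$ transcendental over $\K\nu$) and Proposition~\ref{lemRMuXmenosAtrans} (giving $r\mu_{x-a}$ transcendental over $\overline\K\mu$) --- and to upgrade each statement from ``transcendental over $\K\nu$'' (resp.\ over $\overline\K\mu$) to ``transcendental over $\K(a)\mu$''. The first step is to observe that the hypotheses of the corollary are exactly a special case of the hypotheses of both propositions: choosing $e$ to be the least positive integer with $e\nu(Q)\in\mu\K(a)$ and $h\in\K[x]$ with $\deg(h)<\deg(Q)$ and $\mu(h(a))=e\nu(Q)=\nu(h)$ (such $h$ exists by the divisibility discussion preceding the corollary), we have in particular $\nu(Q^e)=\nu(h)$, so both propositions apply with this $r=Q^e/h$.

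Next I would handle $r\mu_{x-a}$: Proposition~\ref{lemRMuXmenosAtrans} already states that $r\mu_{x-a}$ is transcendental over $\overline\K\mu$, and since $\K(a)\mu\subseteq\overline\K\mu$, it is a fortiori transcendental over $\K(a)\mu$. (Indeed the last line of that proposition's proof already records this.) For $r\nu_Q$ the argument is slightly less immediate: Proposition~\ref{lemrmuQtrans} gives transcendence of $r\nu_Q$ over $\K\nu$, and its final sentence extends this to transcendence over any algebraic extension of $\K\nu$ contained in $\K(x)\nu_Q$. So it suffices to check that the image of $\K(a)\mu$ under the embedding $\K(a)\mu\hookrightarrow\K[x]\nu_Q$ discussed just before the corollary is an algebraic extension of $\K\nu$ sitting inside $\K(x)\nu_Q$. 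This is clear: $\K(a)$ is algebraic over $\K$, hence $\K(a)\mu$ is algebraic over $\K\nu$ (residue field extensions of algebraic valued-field extensions are algebraic), and the embedding identifies it with a subfield of $\K[x]\nu_Q\subseteq\K(x)\nu_Q$.

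Then I would simply assemble these observations into the statement: both $r\nu_Q\in\K(x)\nu_Q$ and $r\mu_{x-a}\in\overline\K(x)\mu_{x-a}$ are transcendental over $\K(a)\mu$ (the latter viewed inside each residue field via the natural identifications, namely the embedding $\K(a)\mu\hookrightarrow\K[x]\nu_Q$ on one side and the inclusion $\K(a)\mu\subseteq\overline\K\mu\subseteq\overline\K(x)\mu_{x-a}$ on the other).

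I do not expect a serious obstacle here; the corollary is essentially bookkeeping that packages Propositions~\ref{lemrmuQtrans} and \ref{lemRMuXmenosAtrans} in the form that will be convenient in the proof of Theorem~\ref{teorandretromeno}. The only point requiring a line of care is the one about the embedding $\K(a)\mu\hookrightarrow\K[x]\nu_Q$ being well-defined (this is Lemma~\ref{lemMQeProdhi}~\textbf{(ii)}, already invoked in the text) and its image being algebraic over $\K\nu$, so that the ``algebraic extension'' clause of Proposition~\ref{lemrmuQtrans} can be applied verbatim.
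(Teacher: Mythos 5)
Your proposal is correct and matches the paper's own (very terse) proof, which simply cites Propositions~\ref{lemrmuQtrans} and \ref{lemRMuXmenosAtrans}; your additional remarks --- that $\nu(h)=\mu(h(a))=e\nu(Q)$ puts you in the hypotheses of both propositions, that $\K(a)\mu\subseteq\overline\K\mu$ handles $r\mu_{x-a}$, and that the embedding $\K(a)\mu\hookrightarrow\K[x]\nu_Q$ realizes $\K(a)\mu$ as an algebraic extension of $\K\nu$ inside $\K(x)\nu_Q$ so the ``in particular'' clause of Proposition~\ref{lemrmuQtrans} applies --- are exactly the bookkeeping the paper leaves implicit.
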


\begin{proof}The proof  follows immediately from Proposition~\ref{lemrmuQtrans} and  Proposition~\ref{lemRMuXmenosAtrans}. 
	
\end{proof}

Consider now the following setting.
\begin{equation}                           \label{sit2}
\left\{
\begin{array}{l}
\mbox{We are in the situation \eqref{sit}, }\nu(Q)\in\mu\overline \K\mbox{, and}\\
\begin{array}{ll}
e\in\N & \mbox{is the least positive integer for which }e\nu(Q)\in \mu \K(a)\\
h\in \K[x] & \mbox{is such that }\nu(h)=e\nu(Q)\mbox{ and }\deg(h)<n=\deg(Q)
\end{array}
\end{array}\right..
\end{equation}

Using the element $r=Q^e/h$ we can prove, when $\nu(Q)\in \mu\overline{\K}$, that
\[
\mu_{x-a}(f)=\nu_Q(f)\mbox{ for every }f\in \K(r)\mbox{ or }f\in \K[x]\mbox{ with }\deg(f)<ne.
\]
These results are the next two lemmas.

\begin{Lema}\label{lemigualQuandoGrauMenoren}
Assume that we are in the situation \eqref{sit2}. If $g\in \K[x]$ is such that $\deg(g)<ne$ then $\mu_{x-a}(g)=\mu(g)=\nu_Q(g)$.
\end{Lema}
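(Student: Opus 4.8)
The plan is to establish this in the situation \eqref{sit2} by leveraging what we already know about degree-$<n$ polynomials, where $\mu_{x-a}$ and $\nu$ and $\nu_Q$ all agree (Remark~\ref{obsGrauMenor}), and then bootstrapping up to degree $<ne$. The natural device is the $Q$-expansion: write $g = g_0 + g_1 Q + \ldots + g_{e-1}Q^{e-1}$ with each $\deg(g_i)<n$, which is possible precisely because $\deg(g)<ne$.

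First I would record the two facts that make everything run. On the one hand, by Remark~\ref{obsGrauMenor}, for each coefficient $g_i$ we have $\mu_{x-a}(g_i)=\mu(g_i)=\mu(g_i(a))$ and, by definition of $\nu_Q$ on degree-$<n$ polynomials, $\nu_Q(g_i)=\nu(g_i)$. On the other hand, by Lemma~\ref{lemMuXmenosaQigualMuQQ} (applied to the key polynomial $Q$ and its optimizing root $a$) we have $\mu_{x-a}(Q)=\mu(Q)=\nu(Q)$. So the values $\mu_{x-a}(g_iQ^i) = \mu(g_i(a)) + i\nu(Q)$ coincide with $\nu(g_iQ^i)$ for every $i$, and also with $\nu_Q(g_iQ^i)$.

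Next I would argue that these values $\mu(g_i(a)) + i\nu(Q)$, for $0\le i\le e-1$, are pairwise distinct — this is the key point and the one place the minimality of $e$ in \eqref{sit2} is used. If $\mu(g_i(a)) + i\nu(Q) = \mu(g_j(a)) + j\nu(Q)$ for some $i\ne j$, then $(i-j)\nu(Q) = \mu(g_j(a))-\mu(g_i(a)) \in \mu\K(a)$, forcing $e \mid (i-j)$ since $e$ is the least positive integer with $e\nu(Q)\in\mu\K(a)$; but $|i-j|<e$, a contradiction. (One should handle the case where some $g_i$ is the zero polynomial separately, but that only removes terms and cannot create a collision.) Since the values are distinct, the ultrametric inequality is an equality: $\mu_{x-a}(g) = \min_i \mu_{x-a}(g_iQ^i)$, and likewise $\mu(g) = \min_i \mu(g_iQ^i)$ and $\nu_Q(g) = \min_i \nu(g_iQ^i)$ (the last also follows directly from Lemma~\ref{lemMuQCoefProdGrauMenorQ}). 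All three minima are over the same set of numbers $\{\mu(g_i(a)) + i\nu(Q)\}$, so they are equal, giving $\mu_{x-a}(g)=\mu(g)=\nu_Q(g)$ as claimed.

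The main obstacle is precisely the distinctness-of-values step: one must be careful that $\nu(Q)=\mu(Q)$ genuinely sits inside $\mu\overline\K$ but that its relevant multiples do \emph{not} fall into $\mu\K(a)$ below the threshold $e$, and that the comparison is made in the right group ($\mu\K(a)$, not $\mu\overline\K$). Everything else is bookkeeping with $Q$-expansions and the already-established agreement of the three valuations on polynomials of degree $<n$.
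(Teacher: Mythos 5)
Your proof is correct and takes essentially the same route as the paper: write the $Q$-expansion $g=\sum_{i=0}^{e-1}g_iQ^i$, use the minimality of $e$ together with $\mu_{x-a}(g_i)=\mu(g_i(a))\in\mu\K(a)$ and $\mu_{x-a}(Q)=\mu(Q)=\nu(Q)$ to see that the values $\mu_{x-a}(g_iQ^i)$ are pairwise distinct, and conclude that all the valuations equal the common minimum. The only cosmetic difference is that you spell out the equality $\mu(g)=\min_i\mu(g_iQ^i)$ (and the zero-coefficient case) explicitly, which the paper leaves implicit.
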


\begin{proof}
Let $g\in \K[x]$ with $\deg(g)<ne$. Then, its  $Q$-expansion is of the form
$$g = \sum_{i=0}^{e-1}g_iQ^i.$$
We claim that $\mu_{x-a}(g)= \underset{0\leq i \leq e-1}{\min}\{ \mu_{x-a}(g_i)+i\mu_{x-a}(Q)  \}$. Otherwise, there would exist $i_0$ and $i_1$, $0\leq i_0<i_1\leq e-1$, such that 
	$$\mu_{x-a}(g_{i_0})+i_0\mu_{x-a}(Q)=\mu_{x-a}(g_{i_1})+i_1\mu_{x-a}(Q). $$
Then, 
$$\mu_{x-a}(g_{i_0})-\mu_{x-a}(g_{i_1})=(i_1-i_0)\mu_{x-a}(Q)=(i_1-i_0)\nu(Q) $$
and since $\mu_{x-a}(g_i)=\mu(g_i(a))=\mu(g_i(a))\in \mu\K(a)$ for every $i$, $0\leq i\leq e-1$, this would imply that
\[
(i_1-i_0)\nu(Q) \in\mu\K(a).
\]
Since $i_1-i_0<e$, this would contradict the minimality of $e$. Therefore,
	$$\mu_{x-a}(g)= \underset{0\leq i \leq e-1}{\min}\{ \mu_{x-a}(g_i)+i\mu_{x-a}(Q)  \} =  \underset{0\leq i \leq e-1}{\min}\{ \nu(g_i)+i\nu(Q)  \}=\nu_Q(g). $$
\end{proof}

\begin{Lema}\label{lemQuocientefgalgebrico}
Assume that we are in the situation \eqref{sit2}. Take $f,g\in \K[x]$ such that $\deg(f),\deg(g)<ne$ and let $u=f/g$. If $\mu_{x-a}(u)=0$, then $u\mu_{x-a}$ is algebraic over $\K\nu$. 
\end{Lema}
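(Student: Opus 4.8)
The plan is to reduce the algebraicity of $u\mu_{x-a}$ over $\K\nu$ to a statement about the residue field $\K(x)\nu_Q$, using the embedding $\K(a)\mu\hookrightarrow\K(x)\nu_Q$ established just before Corollary~\ref{corRMuQeRMuXmenosAtrans} together with the transcendence results we have already proved. First I would observe that by Lemma~\ref{lemigualQuandoGrauMenoren} we have $\mu_{x-a}(f)=\mu(f)=\nu_Q(f)$ and $\mu_{x-a}(g)=\mu(g)=\nu_Q(g)$, since $\deg(f),\deg(g)<ne$; hence $\mu_{x-a}(u)=\nu_Q(u)=0$ and, more importantly, the residue $u\mu_{x-a}$ can be computed inside the graded pieces coming from $\mu$ on $\K[x]$, so it lands in the image of the map $\K(a)\mu\to\K(x)\nu_Q$. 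The point is that the homogeneous components of $f$ and $g$ with respect to $\mu_{x-a}$, being polynomials of degree $<ne$, have the same $\mu_{x-a}$-value as their ordinary $\mu$-value, and hence their residues agree (under the embedding) with the residues of $f$ and $g$ under $\nu_Q$. Thus $u\mu_{x-a}$, viewed in $\overline\K(x)\mu_{x-a}$, actually lies in the subfield generated over $\K\nu$ by the image of $\K(a)\mu$, and correspondingly $u\nu_Q\in\K(x)\nu_Q$ lies in $\K(a)\nu_Q$, i.e.\ in (the image of) $\K(a)\mu$.

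Next I would use that $\K(a)\mid\K$ is a finite extension, so $\K(a)\mu\mid\K\nu$ is a finite — in particular algebraic — extension of residue fields (the residue extension is bounded by the degree of the field extension, by the fundamental inequality). Therefore any element of $\K(a)\mu$ is algebraic over $\K\nu$. Combining this with the previous paragraph gives that $u\mu_{x-a}$ is algebraic over $\K\nu$. The subtlety to handle carefully is the claim that the residue of $u=f/g$ under $\mu_{x-a}$ genuinely sits in the image of $\K(a)\mu$: since $f$ and $g$ are polynomials of degree $<ne$, Lemma~\ref{lemigualQuandoGrauMenoren} tells us $\nu_Q(f)=\mu(f)$ and the $Q$-expansion computation there shows the residue $f\nu_Q$ is a polynomial expression in the residues $g_i(a)\mu$ and in the residue of (a suitable power of) $Q$ times a unit — but here is the crucial restriction $\deg(f)<ne$ with $e$ minimal, which forces the $e$ powers $1,Q,\ldots,Q^{e-1}$ to have pairwise distinct $\mu_{x-a}$-values modulo $\mu\K(a)$, so no cancellation of the transcendental part occurs and the residue of $f$ lies in $\K(a)\mu$ exactly. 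The same applies to $g$, and since $\mu_{x-a}(u)=0$ the quotient of residues is a well-defined nonzero element of $\K(a)\mu$.

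I expect the main obstacle to be making precise the identification of $u\mu_{x-a}\in\overline\K(x)\mu_{x-a}$ with an element of the embedded copy of $\K(a)\mu$: one must argue that although $u$ is an element of $\K(x)$ (not of $\K(a)$), its residue under the truncation $\mu_{x-a}$ — precisely because $f,g$ avoid the degree range $\geq ne$ where the transcendental element $r=Q^e/h$ lives — cannot see the transcendental generator $y=\frac{x-a}{d_a}\mu_{x-a}$ of Lemma~\ref{lemresiduosMuXmenosB}\textbf{(i)}, and so collapses into $\K(a)\mu$. Concretely, I would write $f=\sum_{i=0}^{e-1}f_iQ^i$ and $g=\sum_{i=0}^{e-1}g_iQ^i$, note that the minimum defining $\mu_{x-a}(f)$ (resp.\ $\mu_{x-a}(g)$) is attained at a unique index $i_f$ (resp.\ $i_g$) by the minimality-of-$e$ argument from Lemma~\ref{lemigualQuandoGrauMenoren}, and then $u\mu_{x-a}=\bigl(f_{i_f}Q^{i_f}\bigr)\mu_{x-a}/\bigl(g_{i_g}Q^{i_g}\bigr)\mu_{x-a}$; since $\mu_{x-a}(u)=0$ we get $i_f=i_g$ (again by distinctness of values modulo $\mu\K(a)$), the powers of $Q$ cancel, and what remains is $\bigl(f_{i_f}/g_{i_f}\bigr)\mu_{x-a}=\bigl(f_{i_f}(a)/g_{i_f}(a)\bigr)\mu\in\K(a)\mu$ by Lemma~\ref{lemMuXmenosaQigualMuQQ}. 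This element is algebraic over $\K\nu$ by finiteness of $\K(a)\mid\K$, which finishes the proof.
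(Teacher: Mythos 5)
Your argument is correct and is essentially the paper's own proof: the same $Q$-expansion of $f$ and $g$ into at most $e$ terms, the same use of the minimality of $e$ (plus Lemma~\ref{lemMuXmenosaQigualMuQQ}) to get a unique minimizing index and to force $i_f=i_g$, and the same final step $(f_{i}/g_{i})\mu_{x-a}=(f_{i}(a)/g_{i}(a))\mu$, concluding by algebraicity of the residue extension over $\K\nu$. The only cosmetic difference is that the paper normalizes by constants $c\,b^{i}$ with $\mu(c)=\mu(f_i)=\mu(g_i)$ and $\mu(b)=\mu(Q)$ so that each residue in the quotient is individually well defined (landing in $\overline\K\mu$ rather than $\K(a)\mu$), which is exactly how your displayed quotient of residues should be made precise.
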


\begin{proof}
Take $f\in \K[x]$ with $\deg(f)<ne$ and $Q$-expansion
\[
f=f_0+f_1Q+\ldots+f_{e-1}Q^{e-1}.
\]
Fix $i$, $0\leq i \leq e-1$, such that $\mu(f_iQ^i)=\min_{0\leq j\leq e-1}\{\mu(f_jQ^j)\}$. By the minimality of $e$ (and Lemma \ref{lemMuXmenosaQigualMuQQ}) we have that
\begin{equation}\label{equaimportagf}
\mu(f_jQ^j)>\mu(f_iQ^i)=\mu(f)\mbox{ for every }j\neq i.
\end{equation}
Take another polynomial $g\in \K[x]$of degree smaller than $ne$ with $Q$-expansion 
\[
g=g_0+g_1Q+\ldots+g_{e-1}Q^{e-1}.
\]
Also by the minimality of $e$ (and Lemma \ref{lemMuXmenosaQigualMuQQ}), if $\mu(f)=\mu(g)$, then
\begin{equation}\label{equaimportagg}
\mu(g_jQ^j)>\mu(g_iQ^i)=\mu(g)\mbox{ for every }j\neq i.
\end{equation}
Take $b,c\in \overline \K$ such that $\mu(b)=\mu(Q)$ and $\mu(c)=\mu(f_i)=\mu(g_i)$. Set
\[
\alpha:=\left(\frac{Q}{b}\right)\mu_{x-a},\beta:=\left(\frac{f_i}{c}\right)\mu_{x-a}\mbox{ and }\gamma:=\left(\frac{g_i}{c}\right)\mu_{x-a}.
\]
By Lemma \ref{lemMuXmenosaQigualMuQQ} we have
\[
\beta=\left(\frac{f_i(a)}{c}\right)\mu\in\overline{\K}\mu\mbox{ and }\gamma=\left(\frac{g_i(a)}{c}\right)\mu\in\overline{\K}\mu.
\]
Then, by \eqref{equaimportagf} and \eqref{equaimportagg}, we obtain
\[
\left(\frac{f}{cb^i}\right)\mu_{x-a}=\left(\frac{f_iQ^i}{cb^i}\right)\mu_{x-a}=\beta\alpha^i\mbox{ and }\left(\frac{g}{cb^i}\right)\mu_{x-a}=\left(\frac{g_iQ^i}{cb^i}\right)\mu_{x-a}=\gamma\alpha^i.
\]
Hence,
\[
r\mu_{x-a}=\frac{(f/cb^i)\mu_{x-a}}{(g/cb^i)\mu_{x-a}}=\frac{\beta\alpha^i}{\gamma\alpha^i}=\frac{\beta}{\gamma} \in\overline \K\mu.
\]
Since $\overline \K\mu$ is the algebraic closure of $\K\nu$ in $\overline \K(x)\mu_{x-a}$ the result follows.
\end{proof}

\begin{Lema}\label{lemigualEmKxrGrauMenorne}
Assume that we are in the situation \eqref{sit2}.  Let $g = t_0+t_1r+\ldots +t_sr^s,$
	where $t_i\in\K[x]$ and $\deg(t_i)<ne$ if $t_i\neq 0$. Then $$\mu_{x-a}(g)=\underset{0\leq i \leq s}{\min}\{ \mu_{x-a}(t_i)  \}.$$
	
\end{Lema}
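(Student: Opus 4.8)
The plan is to show that the values $\mu_{x-a}(t_ir^i)=\mu_{x-a}(t_i)$ are pairwise distinct for distinct indices $i$, so that no cancellation can occur in the sum and hence $\mu_{x-a}(g)=\min_i\{\mu_{x-a}(t_i)\}$. Since $\mu_{x-a}(r)=0$ by Lemma~\ref{lemMuXmenosaQigualMuQQ} (recall $r=Q^e/h$ with $\nu(h)=e\nu(Q)$), we have $\mu_{x-a}(t_ir^i)=\mu_{x-a}(t_i)$, and by Lemma~\ref{lemigualQuandoGrauMenoren} we know $\mu_{x-a}(t_i)=\mu(t_i)=\nu_Q(t_i)\in\nu\K(a)$. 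So the value part alone does not immediately separate the terms. The key point is therefore to pass to residues: I would argue that if $\mu_{x-a}(t_ir^i)=\mu_{x-a}(t_jr^j)$ for some $i<j$, then after dividing by a suitable element of $\overline\K$ to normalize the value to $0$, the residue of $t_i/t_j$ would be algebraic over $\K\nu$ (this is exactly the content of Lemma~\ref{lemQuocientefgalgebrico}, applicable since $\deg(t_i),\deg(t_j)<ne$), while the residue of $r^{\,i-j}$ is transcendental over $\K(a)\mu\supseteq\K\nu$ by Corollary~\ref{corRMuQeRMuXmenosAtrans} (or directly Proposition~\ref{lemRMuXmenosAtrans}), since a nonzero power of a transcendental element is transcendental.

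Concretely, suppose $\mu_{x-a}(t_ir^i)=\mu_{x-a}(t_jr^j)$ with $i<j$. Since $r$ is a unit for $\mu_{x-a}$, this forces $\mu_{x-a}(t_i)=\mu_{x-a}(t_j)=:\gamma$. Pick $c\in\overline\K$ with $\mu(c)=\gamma$; then $u:=t_i/t_j$ satisfies $\mu_{x-a}(u)=0$, and $(t_i/c)\mu_{x-a}$, $(t_j/c)\mu_{x-a}$ lie in $\overline\K(x)\mu_{x-a}$ with value $0$. By Lemma~\ref{lemQuocientefgalgebrico}, $u\mu_{x-a}$ is algebraic over $\K\nu$, hence over $\overline\K\mu$ it equals a quotient of elements of $\overline\K\mu$ — in fact the proof of that lemma shows $u\mu_{x-a}\in\overline\K\mu$. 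On the other hand, from $t_i=u\,t_j$ we would get, after dividing the relation $t_ir^i=(u)\,t_jr^j$ by an appropriate unit, that $(r^{\,i-j})\mu_{x-a}$ is algebraic over $\K\nu$; but $i-j<0$, so $(r^{\,j-i})\mu_{x-a}=\big((r\mu_{x-a})^{-1}\big)^{\,j-i}$ is a nonzero integer power of the transcendental element $r\mu_{x-a}$, hence transcendental over $\K\nu$. This contradiction shows $\mu_{x-a}(t_ir^i)\neq\mu_{x-a}(t_jr^j)$ whenever $i\neq j$, and therefore
\[
\mu_{x-a}(g)=\mu_{x-a}\left(\sum_{i=0}^s t_ir^i\right)=\min_{0\leq i\leq s}\{\mu_{x-a}(t_ir^i)\}=\min_{0\leq i\leq s}\{\mu_{x-a}(t_i)\}.
\]

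The main obstacle I anticipate is making the "divide and compare residues" step clean: one must be careful that the normalizations by elements of $\overline\K$ are chosen consistently so that the residues being compared genuinely live in the same graded piece, and that one correctly invokes transcendence of $r\mu_{x-a}$ over $\K(a)\mu$ (Corollary~\ref{corRMuQeRMuXmenosAtrans}) rather than merely over $\K\nu$, since $u\mu_{x-a}$ is only guaranteed to be algebraic over $\K\nu$. Everything else is bookkeeping using that $r$ is a $\mu_{x-a}$-unit and that $\deg(t_i)<ne$ lets us apply Lemmas~\ref{lemigualQuandoGrauMenoren} and~\ref{lemQuocientefgalgebrico} to each coefficient.
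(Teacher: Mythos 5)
Your strategy has a genuine flaw: you set out to prove that the values $\mu_{x-a}(t_ir^i)$ are pairwise distinct, and this is false in general. Take $t_0=t_1=1$: then $t_0r^0=1$ and $t_1r^1=r$ both have $\mu_{x-a}$-value $0$ (since $\mu_{x-a}(r)=0$), yet the lemma's conclusion $\mu_{x-a}(1+r)=0$ still holds --- not because the term values separate, but because the residue $1+r\mu_{x-a}$ cannot vanish, $r\mu_{x-a}$ being transcendental over $\overline\K\mu$. So the situation you try to rule out is not contradictory at all. Moreover, the step meant to produce the contradiction is invalid: from $\mu_{x-a}(t_ir^i)=\mu_{x-a}(t_jr^j)$ you pass to an identity ``$t_ir^i=u\,t_jr^j$'' with $u=t_i/t_j$, but no such identity of elements exists (with $u=t_i/t_j$ it would force $r^i=r^j$). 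Mere equality of two values yields no polynomial relation on $r\mu_{x-a}$, so no algebraicity of a power of $r\mu_{x-a}$ can be extracted from it.

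The correct route --- which uses exactly the ingredients you list, but aimed at the right target --- is to suppose that cancellation occurs in the whole sum, i.e.\ that $\mu_{x-a}(g)>\mu_{x-a}(t_m)$, where $m$ is an index realizing $\min_{0\leq i\leq s}\{\mu_{x-a}(t_i)\}$. Dividing $g$ by $t_m$ and passing to residues gives
\[
0=\left(\frac{g}{t_m}\right)\mu_{x-a}=\sum_{i=0}^{s}\left(\frac{t_i}{t_m}\right)\mu_{x-a}\,(r\mu_{x-a})^{i},
\]
where the coefficient of $(r\mu_{x-a})^{m}$ is $1$ and every other coefficient is either $0$ (when $\mu_{x-a}(t_i/t_m)>0$) or, by Lemma~\ref{lemQuocientefgalgebrico} (applicable since $\deg(t_i),\deg(t_m)<ne$), algebraic over $\K\nu$. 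This exhibits a nontrivial algebraic dependence of $r\mu_{x-a}$ over an algebraic extension of $\K\nu$, contradicting Proposition~\ref{lemRMuXmenosAtrans} (or Corollary~\ref{corRMuQeRMuXmenosAtrans}). Hence $\mu_{x-a}(g)=\min_i\{\mu_{x-a}(t_i)\}$. This is precisely the paper's argument; your proposal identifies the relevant lemmas but misapplies them by trying to separate the terms' values instead of ruling out cancellation at the level of residues.
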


\begin{proof}
 Let $m$, $0\leq m\leq s$, be such that
\[
\mu_{x-a}(t_m)=\min_{0\leq i\leq s}\{\mu_{x-a}(t_i)\}.
\]
In particular, $t_m\neq 0$. If $\mu_{x-a}(t_m)<\mu_{x-a}(g)$, then
\begin{displaymath}
\begin{array}{rcl}
0&=&\displaystyle\left(\frac{t_0+t_1r+\ldots+t_sr^s}{t_m}\right)\mu_{x-a}\\[10pt]
&=&\displaystyle\left(\frac{t_0}{t_m}\right)\mu_{x-a}+\ldots+(r\mu_{x-a})^m+\ldots+\left(\frac{t_s}{t_m}\right)\mu_{x-a}(r\mu_{x-a})^{s}.
\end{array}
\end{displaymath}
For every $i$, $0\leq i\leq s$, by assumption $\left(\dfrac{t_i}{t_m}\right)\mu_{x-a}$ is algebraic over $\K\nu$ and hence $\alpha$ would be algebraic over $\K\nu$.
\end{proof}

\begin{Cor}\label{lemigualEmKr}
Assume that we are in the situation \eqref{sit2}. Let $g = t_0+t_1r+\ldots +t_sr^s,$ where $t_i\in\K[x]$ and $\deg(t_i)<ne$ if $t_i\neq 0$. Then $$\mu_{x-a}(g)=\nu_{Q}(g).$$
In particular, $\mu_{x-a}=\nu_Q$ in $\K(r)$. 
\end{Cor}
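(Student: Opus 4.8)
The plan is to show that $\mu_{x-a}(g)=\min_{0\le i\le s}\{\mu_{x-a}(t_i)\}$ and that $\nu_Q(g)=\min_{0\le i\le s}\{\nu_Q(t_i)\}$, and then to match the two minima term by term via Lemma~\ref{lemigualQuandoGrauMenoren}. On the $\mu_{x-a}$-side nothing new is needed: Lemma~\ref{lemigualEmKxrGrauMenorne} gives $\mu_{x-a}(g)=\min_{0\le i\le s}\{\mu_{x-a}(t_i)\}$, and since each $t_i$ has degree $<ne$, Lemma~\ref{lemigualQuandoGrauMenoren} gives $\mu_{x-a}(t_i)=\nu_Q(t_i)$; hence $\mu_{x-a}(g)=\min_{0\le i\le s}\{\nu_Q(t_i)\}$.

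The core of the argument is the $\nu_Q$-analogue $\nu_Q(g)=\min_{0\le i\le s}\{\nu_Q(t_i)\}$. One inequality is the valuation axiom together with $\nu_Q(r)=0$, which holds because $\nu_Q(Q^e)=e\nu(Q)=\nu(h)=\nu_Q(h)$. For the other, pick $m$ with $\nu_Q(t_m)=\min_i\{\nu_Q(t_i)\}$ and suppose $\nu_Q(t_m)<\nu_Q(g)$; dividing by $t_m$ produces the relation
\[
0=\sum_{i=0}^{s}\left(\frac{t_i}{t_m}\right)\nu_Q\cdot(r\nu_Q)^i
\]
in $\K(x)\nu_Q$, in which the coefficient at $i=m$ equals $1$, so the relation is nontrivial. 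By Proposition~\ref{lemrmuQtrans} the residue $r\nu_Q$ is transcendental over every algebraic extension of $\K\nu$ inside $\K(x)\nu_Q$, so it is enough to show that each $(t_i/t_m)\nu_Q$ is algebraic over $\K\nu$. This is the $\nu_Q$-version of Lemma~\ref{lemQuocientefgalgebrico}, whose proof transcribes almost verbatim: expanding $t_i$ and $t_m$ in powers of $Q$, the minimality of $e$ forces exactly one term of each $Q$-expansion to strictly dominate, so the residue is computed from the degree-$<n$ coefficients alone; the appeals to Lemma~\ref{lemMuXmenosaQigualMuQQ} are replaced by the defining formula of the truncation and by Lemma~\ref{lemMQeProdhi} \textbf{(ii)}, and the embedding $\K(a)\mu\hookrightarrow\K[x]\nu_Q$ of Lemma~\ref{lemMQeProdhi} \textbf{(ii)}, which restricts to the identity on $\K\nu$, places $(t_i/t_m)\nu_Q$ in the finite extension $\K(a)\mu$ of $\K\nu$.

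Combining the two computations yields $\mu_{x-a}(g)=\min_{0\le i\le s}\{\nu_Q(t_i)\}=\nu_Q(g)$, which is the first assertion. The second is then immediate: $\K[r]$ consists of polynomials of precisely the displayed form (take all $t_i\in\K\subseteq\K[x]$, of degree $0<ne$), so $\mu_{x-a}$ and $\nu_Q$ coincide on $\K[r]$; since $r$ is transcendental over $\K$, every element of $\K(r)$ is a quotient of two elements of $\K[r]$, and as both maps are valuations this forces $\mu_{x-a}=\nu_Q$ on $\K(r)$. The one delicate point I foresee is in the $\nu_Q$-version of Lemma~\ref{lemQuocientefgalgebrico}: one must check, without any circular use of Theorem~\ref{teorandretromeno}, that the embedding $\K(a)\mu\hookrightarrow\K[x]\nu_Q$ really does transport the relevant residues correctly and fixes $\K\nu$ pointwise, so that being ``algebraic over $\K\nu$'' passes between the $\mu_{x-a}$- and $\nu_Q$-sides.
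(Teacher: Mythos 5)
Your argument is correct and is essentially the paper's own proof: the paper likewise combines Lemma~\ref{lemigualEmKxrGrauMenorne} on the $\mu_{x-a}$-side, the $\nu_Q$-analogues of Lemmas~\ref{lemQuocientefgalgebrico} and~\ref{lemigualEmKxrGrauMenorne} (which it invokes simply as ``applying them to $\nu$ and $Q$''), and Lemma~\ref{lemigualQuandoGrauMenoren} to match the minima term by term. Your extra verification of the $\nu_Q$-analogue, via $\nu_Q(r)=0$, Proposition~\ref{lemrmuQtrans}, Lemma~\ref{lemMQeProdhi}~\textbf{(ii)} and the embedding $\K(a)\mu\hookrightarrow\K[x]\nu_Q$, is exactly the transcription the paper leaves implicit, and it involves no circular use of Theorem~\ref{teorandretromeno}.
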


\begin{proof}	By Lemma~\ref{lemigualEmKxrGrauMenorne},  $\mu_{x-a}(g)=\underset{0\leq i \leq s}{\min}\{ \mu_{x-a}(t_i) \}$. Applying Lemma~\ref{lemQuocientefgalgebrico} and Lemma~\ref{lemigualEmKxrGrauMenorne} to $\nu$ and $Q$ we conclude that $\nu_Q(g)=\underset{0\leq i \leq s}{\min}\{ \nu_Q(t_i) \}$. By Lemma~\ref{lemigualQuandoGrauMenoren}, since $\deg(t_i)<ne$, we have $\mu_{x-a}(t_i) = \nu_Q(t_i)$.  Thus the equality follows. 
\end{proof}

Now we have all the necessary tools to show the main result of this section. 

\begin{proof}[Proof of Theorem \ref{teorandretromeno}]
	
Assume first that $\nu(Q)\in \mu\overline{\K}$. Consider the field $\K(r)$, with $r$ as in Corollary~\ref {corRMuQeRMuXmenosAtrans}. The field extension $\K(x)\mid \K(r)$ is algebraic and have degree at most $ne$ (because $Q^e-rh=0$). Moreover, we can see $\K(x)$ as $\K(r)(x)$. Each element $f(x)\in \K(x)$ can be written as
	$$f(x)=\sum_{i=0}^{ne-1}f_i(r)x^i \mbox{ with }f_i(r)\in \K(r)\mbox{ for every }i, 0\leq i\leq ne-1.$$
For each $i$, $0\leq i\leq ne-1$, write
	$$f_i(r) = \frac{g_i(r)}{l(r)}, $$
with $g_i, l\in \K[r]$. Then,
	$$ f =\frac{g_0(r)+g_1(r)x+\ldots+g_{ne-1}x^{ne-1}}{l(r)}. $$
	Writing the numerator of $f$ as a polynomial in $r$ we obtain
	$$f = \frac{t_0(x)+t_1(x)r+\ldots+t_{s}(x)r^{s}}{l(r)}, $$
	where $t_i(x)\in \K[x]$, $\deg(t_i(x))<ne$ for every $i$, $0\leq i \leq s$. 
	We have
	$$\mu_{x-a}(f)=\mu_{x-a}(t_0+t_1r+\ldots+t_{s}r^{s})-\mu_{x-a}(l(r)).$$
We know, by Corollary~\ref{lemigualEmKr}, that
	$\mu_{x-a}(l(r))=\nu_Q(l(r))$ and
	$$\mu_{x-a}(t_0+t_1r+\ldots+t_{s}r^{s})=\nu_Q(t_0+t_1r+\ldots+t_{s}r^{s}).  $$
	Therefore  $\mu_{x-a}(f)=\nu_Q(f)$.

Suppose now that $\nu(Q)\not\in \mu\overline{\K}$. Since $\mu\overline\K$ is divisible we have that $\nu(Q)$ is torsion-free over $\mu\overline\K$. We will show that $\nu=\nu_Q$ and $\mu_{x-a}=\mu$ and consequently
\[
\mu_{x-a}|_{\K[x]}=\mu|_{\K[x]}=\nu=\nu_Q.
\] 

For any $f\in \K[x]$ with $\deg(f)<\deg(Q)$ we know, by Lemma~\ref{lemMuXmenosaQigualMuQQ}, that $\nu(f)=\mu(f(a))\in \mu\overline{\K}$. Therefore, applying Lemma~\ref{lemMuQLivreTorcaoMuigualMQ} for $\nu$ and $Q$, we have $\nu=\nu_Q$.

Since $\nu(Q)\notin \mu\overline{\K}$ we have, by Lemma~\ref{lemMuQeDeltaQ}, that $\delta(Q)\notin\mu\overline{\K}$. Since $\mu\overline{\K}$ is divisible we conclude that  $\delta(Q)=\mu(x-a)$ is torsion-free over $\mu\overline{\K}$. By Lemma~\ref{lemMuQLivreTorcaoMuigualMQ}, it follows that  $\mu_{x-a}=\mu$. This concludes our proof.
\end{proof}

\section{Valuation-transcendental and truncations on key polynomials}\label{SecResultado2}

In this section we prove Theorem~\ref{teoValTransiffTruncamento}. Suppose that $\mu$ is a valuation on $\overline{\K}[x]$ extending a valuation $\nu$ on $\K[x]$. We remember that $S$ is the set
$$S := \{ b\in\overline{\K}\mid \mu=  \mu_{b,\delta} \text{ where } \delta=\mu(x-b) \}.$$

\begin{Prop}\label{propSnaoVazioMuigualMuQ}
	If $S\neq \emptyset$, then there exists a key polynomial $Q\in \K[x]$ such that $\nu=\nu_Q$.
\end{Prop}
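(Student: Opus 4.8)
The plan is to extract from $S$ a distinguished element and transfer a minimal pair of definition for $\mu$ down to $\K[x]$. First I would invoke Lemma~\ref{lemMigualMadeltaParMinimaldeDefinicao}: since $S\neq\emptyset$, choose $a\in S$ of smallest degree $[\K(a):\K]$ over $\K$; then $(a,\delta)$ with $\delta=\mu(x-a)$ is a minimal pair of definition for $\mu$. Let $Q$ be the minimal polynomial of $a$ over $\K$, so $Q$ is monic irreducible and $\deg(Q)=[\K(a):\K]$. By Remark~\ref{obsbSRaizOtimiza}, $a$ is an optimizing root of $Q$, hence $\delta(Q)=\mu(x-a)=\delta$.

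Next I would apply Theorem~\ref{teoparminimalpolichave} (Theorem 1.1 of \cite{josneiKeyPolyMinimalPairs}) to this monic irreducible $Q$ with its optimizing root $a$: the ``Moreover'' part states that $(a,\delta(Q))$ is a minimal pair of definition for $\nu$ if and only if $Q$ is a key polynomial for $\nu$ and $\nu=\nu_Q$. Since we have just shown $(a,\delta)$ is a minimal pair of definition for $\mu$, the only remaining point is to check that it is also a minimal pair of definition for $\nu$ — i.e. that the defining properties survive restriction from $\overline{\K}[x]$ to $\K[x]$. The minimal pair condition only refers to $b\in\overline{\K}$ and degrees over $\K$, so it is literally the same statement for $\nu$ and $\mu$; it holds for $\mu$ by construction, hence for $\nu$. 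For the pair-of-definition property, I would argue that $\mu=\mu_{a,\delta}$ forces $\nu=\mu|_{\K[x]}=\mu_{a,\delta}|_{\K[x]}=\nu_Q$: indeed, for $f\in\K[x]$ with $q$-expansion $f=\sum f_iQ^i$, one has $\nu(f)=\mu(f)=\mu_{a,\delta}\!\left(\sum f_i(x-a)^{\deg}\cdots\right)$, and using $\mu(f_i(a))=\mu_{x-a}(f_i)=\nu(f_i)$ (Lemma~\ref{lemMuXmenosaQigualMuQQ} together with Remark~\ref{obsGrauMenor}, since $\deg(f_i)<\deg(Q)$) plus $\mu_{x-a}(Q)=\mu(Q)=\nu(Q)$, one gets $\mu(f_iQ^i)=\nu(f_iQ^i)$, and a torsion/minimality argument (as in Lemma~\ref{lemMuQLivreTorcaoMuigualMQ}, or directly from the monomial structure of $\mu_{a,\delta}$ in $x-a$) shows the minimum is attained uniquely or at least that $\mu(f)=\min_i\nu(f_iQ^i)=\nu_Q(f)$. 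Thus $(a,\delta)$ is a pair of definition for $\nu$ as well.

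Having established that $(a,\delta)$ is a minimal pair of definition for $\nu$, Theorem~\ref{teoparminimalpolichave} immediately yields that $Q$ is a key polynomial for $\nu$ and $\nu=\nu_Q$, which is exactly the desired conclusion.

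The main obstacle I expect is the verification that the pair-of-definition property descends from $\mu$ on $\overline{\K}[x]$ to $\nu$ on $\K[x]$ — that is, proving $\mu=\mu_{a,\delta}$ implies $\nu=\nu_Q$. The subtlety is that $\mu_{a,\delta}$ is monomial in $x-a$ over $\overline{\K}$, while $\nu_Q$ is a truncation in $Q$ over $\K$, so one must carefully relate the two expansions and control possible cancellation among the values $\nu(f_iQ^i)$; this is precisely where Lemma~\ref{lemMuXmenosaQigualMuQQ}, Remark~\ref{obsGrauMenor} and a torsion argument in the spirit of Lemma~\ref{lemMuQLivreTorcaoMuigualMQ} do the work. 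Alternatively, one can bypass this by noting that Theorem~\ref{teorandretromeno} already gives $\mu_{x-a}|_{\K[x]}=\nu_Q$, and $\mu=\mu_{a,\delta}=\mu_{x-a}$ on $\overline{\K}[x]$ since $(a,\delta)$ is a pair of definition, so restricting gives $\nu=\mu|_{\K[x]}=\mu_{x-a}|_{\K[x]}=\nu_Q$ directly.
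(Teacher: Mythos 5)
Your proposal is correct and is essentially the paper's own argument: take $a\in S$ of smallest degree over $\K$, apply Lemma~\ref{lemMigualMadeltaParMinimaldeDefinicao}, Remark~\ref{obsbSRaizOtimiza}, and then Theorem~\ref{teoparminimalpolichave} to the minimal polynomial $Q$ of $a$. The descent from $\mu$ to $\nu$ that you labour over is not really an issue in the paper's conventions, since a minimal pair of definition ``for $\nu$'' in Theorem~\ref{teoparminimalpolichave} means precisely one for the fixed extension $\mu$ --- exactly what Lemma~\ref{lemMigualMadeltaParMinimaldeDefinicao} provides --- although your alternative way of closing that step via Theorem~\ref{teorandretromeno} is also valid and non-circular.
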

\begin{proof}
	Take $a\in S$ with the smallest degree over $\K$ among elements in $S$. By Lemma~\ref{lemMigualMadeltaParMinimaldeDefinicao}, $(a, \mu(x-a))$ is a minimal pair of definition for $\nu$. Let $Q\in \K[x]$ be the minimal polynomial of  $a$ over $\K$. By Remark~\ref{obsbSRaizOtimiza}, $a$ is an optimizing root of $Q$, that is, $\delta(Q)=\mu(x-a)$. By Theorem~\ref{teoparminimalpolichave}, $Q$ is a key polynomial for $\nu$ and  $\nu=\nu_Q$.
\end{proof}

Therefore, in order to prove Theorem \ref{teoValTransiffTruncamento}, Proposition~\ref{propSnaoVazioMuigualMuQ} tell us that it is enough to find a pair $(a,\delta)$ such that $\mu=\mu_{a,\delta}$.

We begin with the case when $\nu$ is a residue-transcendental valuation. This case was studied in \cite{popescu2} and \cite{popescu3}.  

If $\nu$ is  residue-transcendental then it is a Krull valuation and we can extend it naturally to $\K(x)$. We will need the following lemma.

\begin{Lema}\label{outroslema}
Let $\nu$ be any valuation on $\K[x]$. Assume that there exist $q\in \K[x]$ and $a\in \K$ such that $\xi=\left(\frac{q}{a}\right)\nu$ is transcendental over $\K\nu$. Then, for every $a_0,\ldots,a_s\in \K$ we have
\[
\nu(a_0+a_1q+\ldots+a_sq^s)=\min_{0\leq i\leq s}\{\nu(a_iq^i)\}.
\]
\end{Lema}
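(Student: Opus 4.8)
The plan is to show that the $q$-adic "leading terms" are linearly independent in the graded algebra, which amounts to proving the stated minimum formula. First I would reduce to the case where all the $\nu(a_iq^i)$, for the nonzero terms, are equal: indeed, if they are not all equal, then among the nonzero $a_i q^i$ there is a unique one of strictly smallest value, and the standard ultrametric argument (V2) immediately gives $\nu(\sum a_i q^i)=\min_i\{\nu(a_iq^i)\}$. So assume $\gamma:=\nu(a_iq^i)$ is the common value for every $i$ in the support $I=\{i: a_i\neq 0\}$, and we must show $\nu(\sum_{i\in I} a_iq^i)=\gamma$ (rather than $>\gamma$).

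The key step is to use the hypothesis that $\xi=\bigl(\frac{q}{a}\bigr)\nu$ is transcendental over $\K\nu$. For $i\in I$, write $\nu(a_iq^i)=\nu(a_i)+i\nu(q)=\gamma$. Choose, for each such $i$, an element $c_i\in\K$ with $\nu(c_i)=\nu(a_i)$ — for instance $c_i=a_i$ works, or we can normalize using powers of $a$: since $\nu(a_i)=\gamma-i\nu(q)=\nu(a)\cdot(\text{something})$... more cleanly, set $b_i:=a_iq^i/(a_0 q^0)$ when $0\in I$, or fix any single index $i_0\in I$ and divide through by $a_{i_0}q^{i_0}$, which has value $\gamma$. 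Then
\[
\left(\frac{\sum_{i\in I}a_iq^i}{a_{i_0}q^{i_0}}\right)\nu=\sum_{i\in I}\left(\frac{a_iq^i}{a_{i_0}q^{i_0}}\right)\nu=\sum_{i\in I}\left(\frac{a_i}{a_{i_0}}\right)\nu\cdot\left(\frac{q}{a}\right)^{\,i-i_0}\nu=\sum_{i\in I}\overline{d_i}\,\xi^{\,i-i_0},
\]
where $\overline{d_i}=\bigl(\frac{a_i}{a_{i_0}}\bigr)\nu\in\K\nu$ (these residues are nonzero since $\nu(a_i/a_{i_0})=0$ for $i\in I$). This is a nonzero polynomial expression in $\xi$ with coefficients in $\K\nu$ — nonzero because $\xi$ is transcendental over $\K\nu$ and the $\overline{d_i}$ are nonzero — hence it is a nonzero element of $\K(x)\nu$ (or at least of the residue ring $\VR/\MI$ of the place in question). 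Therefore $\nu\bigl(\sum_{i\in I}a_iq^i\bigr)-\gamma=0$, i.e. $\nu\bigl(\sum_{i\in I}a_iq^i\bigr)=\gamma$, as desired.

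I would present the proof in exactly that order: first the easy unequal-values case via (V2); then the equal-values case, dividing by one term of value $\gamma$ to pass to residues, and invoking transcendence of $\xi$ to conclude the residue sum is nonzero. The main (and essentially only) obstacle is bookkeeping: making sure that dividing by $a_{i_0}q^{i_0}$ is legitimate (it has value $\gamma\ne\infty$, so it is invertible in $\K(x)$ and one works there), that each $a_i/a_{i_0}$ has value $0$ so its residue is a well-defined nonzero element of $\K\nu$, and that the exponents $i-i_0$ can be negative — which is fine since $\xi$ is a unit in the residue ring, or alternatively one multiplies through by $\xi^{\,e}$ for $e=\max(i_0,\ldots)$ to get an honest polynomial in $\xi$ of degree bounded by the spread of $I$, whose nonvanishing then follows from algebraic independence of $1,\xi,\xi^2,\dots$ over $\K\nu$. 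No genuinely hard step is expected here; this lemma is the standard "transcendental element trivializes the truncation" observation, parallel to Lemma \ref{lemMuQLivreTorcaoMuigualMQ} but in the residue-transcendental rather than value-transcendental regime.
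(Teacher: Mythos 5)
Your overall strategy is the same as the paper's (normalize by a term of minimal value and use the transcendence of $\xi$ to see that the residue of the normalized sum cannot vanish), but two steps as written are wrong. First, your reduction rests on the claim that if the values $\nu(a_iq^i)$ are not all equal then there is a \emph{unique} index of strictly smallest value. This is false: since $\xi=(q/a)\nu$ is a residue we have $\nu(q)=\nu(a)\in\nu\K$, so distinct terms can share a value; for instance with $\nu(q)=\nu(a)>0$ the terms $a\cdot q^0$, $1\cdot q$, $1\cdot q^2$ have values $\nu(a),\nu(a),2\nu(a)$, and the case ``minimum attained several times but not by all terms'' is covered by neither of your two cases. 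Second, in your main computation the identity $\left(\frac{a_iq^i}{a_{i_0}q^{i_0}}\right)\nu=\left(\frac{a_i}{a_{i_0}}\right)\nu\cdot\xi^{\,i-i_0}$ and the claim $\nu(a_i/a_{i_0})=0$ fail unless $\nu(q)=0$: from $\nu(a_iq^i)=\nu(a_{i_0}q^{i_0})$ one only gets $\nu(a_i/a_{i_0})=(i_0-i)\nu(q)$, which may be negative (so the residue $(a_i/a_{i_0})\nu$ is not even defined) or positive (so it is $0$). The missing ingredient is the power of $a$ you floated and then dropped: $\frac{a_iq^i}{a_{i_0}q^{i_0}}=\frac{a_ia^{\,i-i_0}}{a_{i_0}}\left(\frac{q}{a}\right)^{i-i_0}$, and it is the coefficient $\frac{a_ia^{\,i-i_0}}{a_{i_0}}$, of value $0$ because $\nu(a)=\nu(q)$, whose residue is the nonzero element of $\K\nu$ you need.

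Both defects are repaired at once by the paper's normalization, which also makes your case split and the Laurent-power discussion unnecessary: suppose for contradiction that $\nu\bigl(\sum_i a_iq^i\bigr)>\gamma:=\min_i\nu(a_iq^i)$, pick $l$ with $\nu(a_lq^l)=\gamma$, and divide the whole sum by $a_la^l$ (not by $a_lq^l$). The $i$-th term becomes $\frac{a_ia^{\,i-l}}{a_l}\left(\frac{q}{a}\right)^{i}$, whose coefficient has value $\nu(a_iq^i)-\gamma\geq 0$; passing to residues yields $\sum_i c_i\,\xi^{\,i}=0$ with all $c_i\in\K\nu$, only nonnegative exponents, $c_l=1$, and $c_i=0$ exactly for the terms of value $>\gamma$. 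This is a nontrivial polynomial relation for $\xi$ over $\K\nu$, contradicting transcendence, with no subcases needed.
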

\begin{proof}
Suppose that there exist $a_0,\ldots,a_s\in \K$ such that
\[
\nu(a_0+a_1q+\ldots+a_sq^s)>\min_{0\leq i\leq s}\{\nu(a_iq^i)\}.
\]
Choose $l$, $0\leq l\leq s$, such that $\nu(a_lq^l)=\displaystyle\min_{0\leq i\leq s}\{\nu(a_iq^i)\}$. Then we obtain that
\[
\frac{a_0}{a_la^l}\nu+\ldots+\xi^l+\ldots+\frac{a_sa^{s-l}}{a_l}\nu\xi^s=\left(\frac{a_0+\ldots+a_lq^l+\ldots+a_sq^s}{a_la^l}\right)\nu=0.
\]
This is a contradiction to the fact that $\alpha$ is transcendental over $\K\nu$.
\end{proof}
The next result follows  from \cite{popescu2} (Proposition 1.1 and Proposition 1.3).
\begin{Prop}\label{propPopescu}
If the valuation $\nu$ in $\K[x]$ is residue-transcendental, then there exist $a\in \overline{\K}$ and  $\delta=\mu(x-a)\in  \mu\overline{\K}$ such that $\mu = \mu_{a,\delta}$.  	
\end{Prop}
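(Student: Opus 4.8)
The plan is to produce a residue-transcendental element explicitly and then invoke Lemma~\ref{outroslema} together with Lemma~\ref{lemParDeDefinicaoCondicao} to identify a pair of definition. Since $\nu$ is residue-transcendental, by definition the residue field extension $\K(x)\nu\mid\K\nu$ is transcendental, so there is some $g\in\K[x]$ whose residue (after normalizing by a suitable constant) is transcendental over $\K\nu$. The first step is to pass from such a $g$ to a \emph{monic linear} factor over $\overline\K$ doing the job: factor $g=c\prod_i(x-a_i)$ over $\overline\K$; since the residue class of $g/(\text{constant})$ is transcendental over $\K\nu\subseteq\overline\K\mu$, it is in particular transcendental over $\overline\K\mu$, and a product of residues each of which is algebraic over $\overline\K\mu$ would be algebraic, so at least one factor $(x-a_i)$ must have $\mu(x-a_i)\in\mu\overline\K$ with $\left(\frac{x-a_i}{d_i}\right)\mu$ transcendental over $\overline\K\mu$, where $\mu(d_i)=\mu(x-a_i)$. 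Set $a:=a_i$ and $\delta:=\mu(x-a)\in\mu\overline\K$.

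Next I would check that $(a,\delta)$ is a pair of definition for $\mu$, i.e.\ that $\mu=\mu_{a,\delta}$. By Lemma~\ref{lemParDeDefinicaoCondicao} it suffices to show $\delta=\mu(x-a)\geq\mu(x-c)$ for every $c\in\overline\K$. Suppose not, so $\mu(x-c)>\delta$ for some $c$; then $\mu(a-c)=\mu\bigl((x-c)-(x-a)\bigr)=\mu(x-a)=\delta$ (the minimum is attained uniquely at $x-a$), and hence $\left(\frac{x-a}{d}\right)\mu=\left(\frac{(x-c)-(c-a)}{d}\right)\mu$, with $\mu(x-c)>\mu(c-a)=\mu(d)$, so $\left(\frac{x-a}{d}\right)\mu=\left(\frac{-(c-a)}{d}\right)\mu\in\overline\K\mu$, contradicting transcendence. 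Therefore $\delta\geq\mu(x-c)$ for all $c$, and Lemma~\ref{lemParDeDefinicaoCondicao} gives $\mu=\mu_{a,\delta}$, i.e.\ $(a,\delta)$ is the desired pair of definition. Since $\delta=\mu(x-a)\in\mu\overline\K$, this is exactly the conclusion.

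An alternative, and perhaps cleaner, route — the one I would actually write if I want to lean on the cited sources as the statement advertises — is to apply Lemma~\ref{outroslema}: once we have $q$ and $a$ with $\xi=\left(\frac{q}{a}\right)\nu$ transcendental over $\K\nu$, the lemma says $\nu$ is the truncation $\nu_q$ (in the monomial sense with respect to $q$), so $\nu=\nu_q$ for some polynomial $q$, and then Theorem~1.3 of \cite{josneiKeyPolyMinimalPairs} (or the analysis of $S$ above) upgrades this to a minimal pair of definition. But the self-contained argument is the two paragraphs above: extract the linear factor and verify the pair-of-definition criterion directly.

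The main obstacle I anticipate is the very first reduction: guaranteeing that transcendence of a residue of \emph{some} polynomial $g$ forces transcendence of the residue of one of its \emph{linear} factors over $\overline\K$, with the right normalizing constant $d_i$ in $\overline\K$. This needs: (a) each factor $x-a_i$ with $\mu(x-a_i)\notin\mu\overline\K$ cannot occur, because then $\mu(g)$ would have a component outside $\mu\overline\K$ yet $\nu(g)\in\nu\K\subseteq\mu\overline\K$ is impossible to normalize to residue $\neq 0,\infty$ — more precisely one must handle the grouping of factors by their values carefully; and (b) among the factors with value in $\mu\overline\K$, the product of their normalized residues being transcendental forces at least one normalized residue to be transcendental, since $\overline\K\mu$ is a field (algebraically closed in $\overline\K(x)\mu_{x-a}$ by general valuation theory) and a finite product of algebraic elements is algebraic. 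This bookkeeping over the factorization is where the cited Popescu arguments do the work, and I would either reproduce it or simply cite Proposition~1.1 and Proposition~1.3 of \cite{popescu2} as the statement indicates.
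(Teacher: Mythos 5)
Your overall strategy is the paper's: factor over $\overline{\K}$, extract a linear factor $x-a$ whose normalized residue is transcendental over $\overline{\K}\mu$, and conclude that $\mu$ is the monomial valuation. Your closing step via Lemma~\ref{lemParDeDefinicaoCondicao} (rather than the paper's direct use of Lemma~\ref{outroslema}) is a valid variant, up to the harmless slip $x-a=(x-c)+(c-a)$, not $(x-c)-(c-a)$.

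The genuine gap is in the reduction, exactly at the two points you flag but do not resolve correctly. First, ``transcendental over $\K\nu\subseteq\overline{\K}\mu$, hence in particular over $\overline{\K}\mu$'' is not automatic: transcendence over a subfield does not pass to an overfield in general. It does here only because $\overline{\K}\mu$ is algebraic over $\K\nu$ (as $\overline{\K}\mid\K$ is algebraic), which is the ``transitivity of algebraicity'' step the paper makes explicit; your later justification (``$\overline{\K}\mu$ is algebraically closed in $\overline{\K}(x)\mu_{x-a}$'') is not the relevant fact. Second, and more seriously, your item (a) --- that no factor $x-a_i$ can have $\mu(x-a_i)\notin\mu\overline{\K}$ ``because then $\mu(g)$ would have a component outside $\mu\overline{\K}$'' --- does not work: elements outside a subgroup can sum to an element inside it, so several factors with values outside $\mu\overline{\K}$ could still give $\mu(g)\in\mu\overline{\K}$, and without this step your normalizing constants $d_i$ need not exist. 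The correct argument, which is where the paper does its real work, is: once $\mu$ is known to be residue-transcendental over $\overline{\K}$ (first point), the Abhyankar inequality together with the divisibility of $\mu\overline{\K}$ gives $\mu\overline{\K}(x)=\mu\overline{\K}$, so \emph{every} $\mu(x-a_i)$ lies in $\mu\overline{\K}$ and the constants exist. Relatedly, residue-transcendence of $\nu$ hands you a rational function $f/g\in\K(x)$ with transcendental residue, not a polynomial divided by a constant of $\K$ (the value $\nu(g)$ need not lie in $\nu\K$), so both numerator and denominator must be carried through the factorization, as the paper does. With the Abhyankar step inserted (or an explicit appeal to Propositions 1.1 and 1.3 of \cite{popescu2}), your argument closes.
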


\begin{proof}
Since $\nu$ is residue-transcendental there exists $r\in \K(x)$ such that $r\nu$ is transcendental over $\K\nu$. However $r\nu=r\mu\in \overline{\K}(x)\mu$. Thus, $r\mu$ is transcendental over $\K\nu$. Hence, by the transitivity of algebraicity, $r\mu$ is also transcendental over any algebraic extension of $\K\nu$ contained in $\overline{\K}(x)\mu$. Therefore, $r\mu$ is transcendental over $\overline{\K}\mu$, showing that $\mu$ is residue-transcendental.

Since $\mu$ is residue-transcendental, by the Abhyankar inequality and the fact that $\mu\overline\K$ is divisible, we have $\mu\overline{\K}(x)=\mu\overline{\K}$. Take $r=f/g\in \overline \K(x)$ such that $r\mu$ is transcendental over $\overline \K\mu$. Write
\[
f=c(x-a_1)\cdots (x-a_l)\mbox{ and }g=d(x-b_1)\cdots(x-b_m).
\]
For each $i$, $1\leq i\leq l$, and $j$, $1\leq j\leq m$, choose $e_i,f_j\in\overline \K$ such that
\[
\mu(x-a_i)=\mu(e_i)\mbox{ and }\mu(x-b_j)=\overline\mu(f_j).
\]
Then
\begin{equation}\label{equantrsanncenbaisxo}
r\mu=\beta\left(\prod_{i=1}^l\frac{x-a_i}{e_i}\mu\right)\left(\prod_{j=1}^m\frac{x-b_j}{f_j}\mu\right)^{-1}
\end{equation}
for some $\beta\in \overline \K\mu$. Since $r\mu$ is transcendental over $\overline \K\mu$,  by \eqref{equantrsanncenbaisxo}, we conclude that there exist $a,b\in\K$ such that $\dfrac{x-a}{b}\mu$ is transcendental over $\overline \K\mu$. By Lemma \ref{outroslema} we conclude that $\mu=\mu_{x-a}$.
\end{proof}

We now look at the case when $\nu$ is value-transcendental.
\begin{Lema}
Let $\nu$ be value-transcendental Krull valuation in $\K[x]$ and $\mu$ an extension of $\nu$ to $\overline{\K}[x]$. Then there exists $q\in \K[x]$ such that $\nu(q)\notin\mu\overline \K$.
\end{Lema}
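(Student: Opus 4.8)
The plan is to translate ``value-transcendental'' into a divisibility statement in the value group and then factor. Recall from the discussion preceding Corollary~\ref{corRMuQeRMuXmenosAtrans} that $\mu\overline{\K}$ is the divisible hull of $\nu\K$; since ordered abelian groups are torsion-free, for a nonzero $q\in\K[x]$ this yields the equivalence
\[
\nu(q)\notin\mu\overline{\K}\quad\Longleftrightarrow\quad \nu(q)\text{ is torsion-free over }\nu\K,
\]
that is, $n\nu(q)\notin\nu\K$ for every integer $n\geq 1$. So it suffices to produce a $q\in\K[x]$ whose value is torsion-free over $\nu\K$.

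Now, $\nu$ is a value-transcendental Krull valuation, hence it extends to $\K(x)$ and $\nu\K(x)/\nu\K$ is not a torsion group. I would pick a nonzero $r\in\K(x)$ whose class in $\nu\K(x)/\nu\K$ has infinite order, i.e. $\nu(r)$ is torsion-free over $\nu\K$, equivalently $\nu(r)\notin\mu\overline{\K}$. Writing $r=f/g$ with $f,g\in\K[x]$ nonzero, we have $\nu(r)=\nu(f)-\nu(g)$. Since $\mu\overline{\K}$ is a subgroup of the ambient value group, we cannot have both $\nu(f)\in\mu\overline{\K}$ and $\nu(g)\in\mu\overline{\K}$, for then $\nu(r)=\nu(f)-\nu(g)$ would lie in $\mu\overline{\K}$, contradicting the choice of $r$. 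Hence at least one of $\nu(f),\nu(g)$ lies outside $\mu\overline{\K}$, and taking $q$ to be the corresponding polynomial proves the claim.

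The whole argument is a straightforward unwinding of the definitions, so there is no genuine obstacle. The only point that deserves to be spelled out is the identification of $\mu\overline{\K}$ with the divisible hull of $\nu\K$ (which rests on $\overline{\K}/\K$ being algebraic, so that $\mu\overline{\K}/\nu\K$ is torsion, together with $\mu\overline{\K}$ being divisible); this is exactly what lets one pass between ``torsion-free over $\nu\K$'' and ``not in $\mu\overline{\K}$'', and it has already been invoked several times in the paper.
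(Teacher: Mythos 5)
Your proof is correct and rests on the same core fact as the paper's: that $\mu\overline{\K}$ is the divisible hull of $\nu\K$, so lying in $\mu\overline{\K}$ is equivalent to being torsion over $\nu\K$. The paper simply runs the argument in contrapositive form (if every polynomial value were in $\mu\overline{\K}$, all values would be torsion over $\nu\K$, contradicting value-transcendence), whereas you argue directly by factoring a rational function $r=f/g$ of infinite order modulo $\nu\K$; this is the same approach up to presentation.
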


\begin{proof}
Assume, aiming for a contradiction, that $\nu(q)\in\mu\overline\K$ for every $q\in \K[x]$. Since $\mu\overline \K$ is the divisible hull of $\nu\K$, $\nu(q)$ would be torsion over $\nu\K$, for every $q\in \K[x]$. Hence, $\nu$ is not value-transcendental, which is a contradiction.
\end{proof}

\begin{Prop}\label{lemValTransKbarraValMonomial}
Let $\mu$ be a Krull valuation in $\overline{\K}[x]$ which is value-transcendental over $\overline{\K}$. Then there exist $a\in \overline{\K}$ and $\delta=\mu(x-a)\in \mu\overline{\K}[x]$ such that  $\mu=\mu_{a,\delta}$. 
\end{Prop}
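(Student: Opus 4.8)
The plan is to reduce the problem to the construction of a single degree-one polynomial $x-a$ over $\overline\K$ whose normalized residue (or rather, normalized value) witnesses the value-transcendence, and then to invoke Lemma~\ref{outroslema} (applied to $\mu$ and $q=x-a$) to conclude $\mu=\mu_{x-a}$. Concretely, since $\mu$ is value-transcendental over $\overline\K$ and Krull, the quotient group $\mu\overline\K(x)/\mu\overline\K$ is not torsion, so there exists $r=f/g\in\overline\K(x)$ with $\mu(r)$ torsion-free over $\mu\overline\K$. First I would factor $f$ and $g$ into linear factors over $\overline\K$, say $f=c\prod_{i=1}^\ell(x-a_i)$ and $g=d\prod_{j=1}^m(x-b_j)$, so that $\mu(r)=\mu(c/d)+\sum_i\mu(x-a_i)-\sum_j\mu(x-b_j)$. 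Since $\mu(c/d)\in\mu\overline\K$, and $\mu(r)$ is torsion-free modulo $\mu\overline\K$, at least one of the values $\mu(x-a_i)$ or $\mu(x-b_j)$ must itself be torsion-free over $\mu\overline\K$. Thus there is some $a\in\overline\K$ with $\delta:=\mu(x-a)\notin$ the divisible hull of $\mu\overline\K$; in particular $n\delta\notin\mu\overline\K$ for all $n\ge 1$.

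Next I would show that for this choice of $a$, the element $(x-a)\mu$ (viewed in the graded algebra, or equivalently the fact that $\mu(x-a)$ is not a $\Q$-combination of values in $\mu\overline\K$) forces the monomial equality. The cleanest route is the value-theoretic analogue of Lemma~\ref{outroslema}: for any $a_0,\ldots,a_s\in\overline\K$ we want $\mu(\sum_i a_i(x-a)^i)=\min_i\{\mu(a_i)+i\delta\}$. If two terms had equal value, say $\mu(a_{i_0})+i_0\delta=\mu(a_{i_1})+i_1\delta$ with $i_0<i_1$, then $(i_1-i_0)\delta=\mu(a_{i_0})-\mu(a_{i_1})\in\mu\overline\K$, contradicting that $\delta$ is torsion-free over $\mu\overline\K$. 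Hence all the values $\mu(a_i)+i\delta$ are distinct, the minimum is attained uniquely, and by the standard ultrametric argument (V2 with strict inequality) the value of the sum equals that minimum. Since every polynomial in $\overline\K[x]$ is a product of linear factors, and $\mu$ is a valuation, it suffices to check the equality $\mu(p)=\mu_{a,\delta}(p)$ on such products — and this is immediate from writing $x-b=(x-a)+(a-b)$ and applying the above with $s=1$, or more directly from Lemma~\ref{Lemaintermediario} together with Lemma~\ref{lemParDeDefinicaoCondicao} once we know $\delta=\mu(x-a)\ge\mu(x-c)$ for all $c$ (which holds because, if some $\mu(x-c)>\delta$, then $\mu(a-c)=\delta$, forcing $\mu(x-a)=\mu((x-c)-(a-c))=\mu(a-c)=\delta$, consistent, but one then replaces $a$ by $c$; a short case analysis shows we may assume $a$ realizes the maximum without loss).

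The main obstacle I anticipate is the last point: ensuring that the particular $a$ extracted from the factorization of $r$ can be taken to satisfy $\mu(x-a)\ge\mu(x-c)$ for every $c\in\overline\K$, i.e.\ that $a$ is (up to the equivalence of Lemma~\ref{lemParesDefinemMesmaVal}) a pair of definition rather than merely having torsion-free value. One must argue that if $\mu(x-c)>\mu(x-a)$ for some $c$, then $\mu(x-c)$ is \emph{also} torsion-free over $\mu\overline\K$ (since $\mu(x-c)=\mu(x-a)+\mu(\tfrac{x-c}{x-a})$ and the difference $\mu(\tfrac{x-c}{x-a})=\mu(1+\tfrac{a-c}{x-a})\ge 0$ lies in $\mu\overline\K(x)$, but in fact equals $\mu(a-c)-\delta$ once $\mu(a-c)<\mu(x-c)$... ), and then iterate; since value groups of such monomial-type valuations are finitely generated over $\mu\overline\K$ by a single element, this process stabilizes at an element realizing the supremum, which is then the required $a$. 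Alternatively — and this is probably the slickest finish — I would simply observe that Lemma~\ref{outroslema} as stated already gives $\mu=\mu_{x-a}$ directly once we know $(x-a)/b\,\mu$ is transcendental, but here we are in the value-transcendental case so there is no residue-transcendental generator; instead the direct distinctness-of-values computation above plays the role of Lemma~\ref{outroslema}, and no appeal to optimality of $a$ is needed at all, because the monomial valuation $\mu_{a,\delta}$ and $\mu$ agree on all linear polynomials $x-c$ by the case $s=1$ of the distinctness argument combined with $\mu(a-c)\ge\min\{\mu(x-a),\mu(x-c)\}$. This removes the obstacle entirely, so the proof runs: extract $a$ with $\delta=\mu(x-a)$ torsion-free over $\mu\overline\K$; verify the distinctness of $\mu(a_i)+i\delta$; conclude $\mu=\mu_{a,\delta}$ on linears hence on all of $\overline\K[x]$.
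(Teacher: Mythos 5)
Your proof is correct and is essentially the paper's own argument: the paper likewise extracts a linear factor $x-a$ with $\mu(x-a)\notin\mu\overline{\K}$ (equivalently torsion-free over $\mu\overline{\K}$, by divisibility) from a polynomial witnessing value-transcendence, and then concludes $\mu=\mu_{a,\delta}$ by exactly your distinctness-of-values argument, which it has already packaged as Lemma~\ref{lemMuQLivreTorcaoMuigualMQ}. Your worry about needing $a$ to be optimal is indeed superfluous, as you yourself conclude.
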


\begin{proof}
By the previous Lemma, there exists $q\in\overline \K[x]$ such that $\mu(q)\notin \mu\overline \K$. Since every polynomial in $\overline\K[x]$ can be written as the product of linear factors, we can assume that $q=x-a$ for some $a\in \overline \K$. Applying Lemma \ref{lemMuQLivreTorcaoMuigualMQ}, we obtain that $\mu=\mu_{a,\delta}$.
\end{proof}

We are now ready to prove Thereom \ref{teoValTransiffTruncamento}.

\begin{proof}[Proof of Theorem \ref{teoValTransiffTruncamento}]
Suppose that $\nu$ is valuation-transcendental.  	If $\nu$ is not Krull, then $\SU(\nu)=\langle Q\rangle $, with $Q$ a non-zero polynomial.  By definition $\epsilon(Q)=\infty$ and hence $Q$ is a key polynomial. Given  $f\in \K[x]$, if $f =f_0+\ldots+f_sQ^s$ is its $Q$-expansion, with $\deg(f_0)<\deg(Q)=\alpha$, then $\nu(f_0)\neq \infty$ and
		$$\nu(f) = \nu(f_0+\ldots+f_sQ^s)=\nu(f_0)=\nu_Q(f). $$
		
If $\nu$ is a Krull valuation, then take an extension $\mu$ of $\nu$ to $\overline\K[x]$. 	If $\nu$ is residue-transcendental, then by Proposition~\ref{propPopescu} $\mu$ admits a pair of definition. If $\nu$ is value-transcendental, then  by Proposition~\ref{lemValTransKbarraValMonomial} $\mu$ admits a pair of definition. In both cases, by Proposition~\ref{propSnaoVazioMuigualMuQ}, there exists a key polynomial $Q$ such that $\nu=\nu_Q$.

For the converse, assume that $\nu=\nu_Q$ for some key polynomial $Q$. By Theorem 1.3 of \cite{josneiKeyPolyMinimalPairs} we have that $\nu$ is valuation-transcendental. 
\end{proof}
\begin{Obs}
In \cite{Nart1} and \cite{Nart2} it is studied when a valuation admits \textit{MacLane-Vaqui\'e key polynomials}. Theorem \ref{teoValTransiffTruncamento} gives a criterion for such existence in terms of basic properties of valuations.
\end{Obs}
\section{On the graded algebra associated to a valuation}\label{gradegalh}
If $Q$ is a key polynomial for $\nu$, then $\nu_Q$ is a valuation in $\K[x]$ (Proposition 2.6 of \cite{josneiKeyPolyPropriedades}). On the other hand, if $\nu_q$ is a valuation, then it is not necessarily true that $q$ is a key polynomial (see Corollary 2.4 of \cite{josneiKeyPolyMinimalPairs}). In this section we assume that $\nu$ is a valuation on $\K[x]$ and $q\in K[x]$ is such that $\nu_q$ is a valuation. We study when $q$ is a key polynomial.

For each $\gamma\in \nu_q(\K[x])$, we consider the abelian groups
$$\mathcal{P}_\gamma = \{ f\in \K[x]\mid \nu_q(f)\geq \gamma \} \text{ and } \mathcal{P}_{\gamma}^{+} = \{ f\in \K[x]\mid \nu_q(f)>\gamma \}. $$

\begin{Def}
	The graded ring of $\K[x]$ associated to $\nu_q$ is defined as
	$$\mathcal G_q={\rm gr}_{\nu_q}(\K[x]):= \bigoplus_{\gamma\in \nu_q(\K[x])}\mathcal{P}_\gamma/ \mathcal{P}_{\gamma}^{+}.$$	
\end{Def}

For $f\not \in \SU(\nu_q)$ we will denote by $\inv_q(f)$ the image of $f$ in $\mathcal{P}_{\nu_q(f)}/ \mathcal{P}_{\nu_q(f)}^{+}.$ If $f\in \SU(\nu_q)$, then we set $\inv_q(f)=0$.

Let $R_q$ be the additive subgroup of $\mathcal G_q$ generated by
$$\{ \inv_q(f)\mid f\in \K[x]_d  \}, $$
where $d=\deg(q)$ and $\K[x]_d=\{ f\in \K[x]\mid \deg(f)<d \}$. Set $y:=\inv_q(q)$.

\begin{Lema}\label{Lematrasncey}
If 
$$a_0+a_iy+\ldots +a_sy^s=0 $$
for some $a_0, \ldots, a_r \in R_q$, then $a_i =0$ for every $i$, $0\leq i \leq s$. 
\end{Lema}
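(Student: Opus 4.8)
The plan is to show that $y = \inv_q(q)$ is transcendental over the subring $R_q$, by reducing to the defining property of the truncation $\nu_q$. Suppose that we have a relation $a_0 + a_1 y + \ldots + a_s y^s = 0$ in $\mathcal{G}_q$ with $a_i \in R_q$. Each $a_i$ is a finite sum of elements of the form $\inv_q(f)$ with $\deg(f) < d$; by grouping homogeneous components we may assume that each nonzero $a_i$ is homogeneous, say $a_i = \inv_q(f_i)$ with $f_i \in \K[x]_d$ and $\nu_q(f_i) + i\nu_q(q) = \gamma$ for a common degree $\gamma$ (the terms not of degree $\gamma$ cancel separately, so it suffices to treat one homogeneous slice at a time; a term with $a_i = 0$ is simply omitted). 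Then $\inv_q(f_i q^i) = a_i y^i$ for each such $i$, and the relation says $\sum_i \inv_q(f_i q^i) = 0$ in $\mathcal{P}_\gamma / \mathcal{P}_\gamma^+$, i.e. $\nu_q\left(\sum_i f_i q^i\right) > \gamma = \min_i \{\nu_q(f_i q^i)\}$.

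The key step is then to observe that $\sum_i f_i q^i$ is already written in its $q$-expansion, since each $f_i$ has degree $< d = \deg(q)$. Therefore, by the very definition of the truncation,
\[
\nu_q\left(\sum_i f_i q^i\right) = \min_i\{\nu(f_i q^i)\} = \min_i\{\nu_q(f_i q^i)\} = \gamma,
\]
where the middle equality uses that $\nu_q(f_i) = \nu(f_i)$ for $\deg(f_i) < d$ (immediate from the definition of $\nu_q$ on polynomials of degree less than $d$). This contradicts the strict inequality above unless there were no nonzero terms in that homogeneous slice to begin with. Hence $a_i = 0$ for every $i$.

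The main (and essentially only) obstacle is the bookkeeping in the first paragraph: one must be careful that an arbitrary element $a_i \in R_q$ need not itself be of the form $\inv_q(f_i)$ for a single polynomial, but only a sum of such, so the passage to homogeneous components and then to a single representative $f_i$ of degree $< d$ in each graded piece needs to be spelled out. Once the relation is reduced to a genuine $q$-expansion, the conclusion is immediate from the definition of $\nu_q$ — this is really just the statement that the truncation is, by construction, ``monomial'' in $q$ at the level of $q$-expansions with coefficients of degree less than $\deg(q)$. I would present the argument in the contrapositive form: assume some $a_i \neq 0$, pass to the homogeneous slice realizing the minimal value, and derive the contradiction $\gamma > \gamma$.
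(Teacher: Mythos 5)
Your proof is correct and takes essentially the same route as the paper's: reduce to coefficients of the form $\inv_q(f_i)$ with $f_i\in\K[x]_d$, note that $\sum_i f_iq^i$ is then its own $q$-expansion so that by the very definition of the truncation its value is exactly the minimum $\gamma$, contradicting the vanishing of the corresponding homogeneous piece. The paper packages this slightly differently (it assembles one polynomial $f=\sum_i f_iq^i$ and phrases the contradiction as $\inv_q(f)=0$ although $f\notin\SU(\nu_q)$, using the minimal-value set $S_q(f)$ instead of arguing slice by slice), but the underlying argument is identical.
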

\begin{Obs}
If $R_q$ is a ring, then the previous lemma says that $y$ is transcendental over $R_q$.
\end{Obs}

\begin{proof}[Proof of Lemma \ref{Lematrasncey}]
Suppose there exist $a_0, \ldots, a_s \in R_q$ such that
	$$a_0+a_iy+\ldots +a_sy^s=0.$$
We can assume that $a_i=0$ or  $a_i=\inv_q(f_i)$ with $f_i\in \K[x]_d$ for every $i$, $0\leq i \leq s$. If $a_j\neq 0$ for some $j$, $0\leq j\leq s$, then $f_j\notin\SU(\nu_q)$. Set
	$$f=\sum_{i=1}^{s}f_iq^i. $$
By the assumption on the $a_i$'s and the definition of $\nu_q$ we have that
	$$0= \sum_{i\in S_q(f)}a_iy^i = \inv_q\left( \sum_{i\in S_q(f)}f_iq^i  \right) = \inv_q(f). $$
This is a contradiction because $f\notin \SU(\nu_q)$. 
\end{proof}

\begin{Def}
	For $f\in \K[x]$ with $q$-expansion $f=f_0+f_1q+ \ldots +f_rq^r$ we define
	$$S_q(f)=\{ i\mid \nu_q(f)=\nu(f_iq^i)  \} \text{ and } \delta_q(f)=\max S_q(f). $$
	
\end{Def}
\begin{Lema}\label{lemAlgGraduadaAnelPoliy}
	We have
	$$ \mathcal G_q= R_q[y].$$

\end{Lema}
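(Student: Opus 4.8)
The plan is to show the two inclusions $R_q[y]\subseteq \mathcal G_q$ and $\mathcal G_q\subseteq R_q[y]$, where only the second requires work. First I would note that $R_q\subseteq\mathcal G_q$ by definition and $y=\inv_q(q)\in\mathcal G_q$, and since $\mathcal G_q$ is a ring this immediately gives $R_q[y]\subseteq\mathcal G_q$; in fact combined with Lemma~\ref{Lematrasncey} (transcendence of $y$ over $R_q$) this shows $R_q[y]$ is genuinely a polynomial ring sitting inside $\mathcal G_q$. For the reverse inclusion, I would recall that $\mathcal G_q$ is generated as an abelian group by the homogeneous elements $\inv_q(f)$ for $f\in\K[x]$, so it suffices to prove $\inv_q(f)\in R_q[y]$ for every $f\in\K[x]\setminus\SU(\nu_q)$.

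The key step is the following computation using the $q$-expansion. Write $f=f_0+f_1q+\ldots+f_rq^r$ with each $f_i\in\K[x]_d$, and recall $S_q(f)=\{i : \nu_q(f)=\nu(f_iq^i)\}$. Since $\nu_q$ is assumed to be a valuation and $\nu_q(f)=\min_i\{\nu(f_iq^i)\}$ by the very definition of the truncation, the standard fact about passing to the graded ring gives
\[
\inv_q(f)=\sum_{i\in S_q(f)}\inv_q(f_iq^i)=\sum_{i\in S_q(f)}\inv_q(f_i)\,y^i,
\]
where the first equality holds because the terms with $i\notin S_q(f)$ have strictly larger value (hence map to $0$ in the relevant quotient $\mathcal P_{\nu_q(f)}/\mathcal P_{\nu_q(f)}^+$) and the terms with $i\in S_q(f)$ all lie in $\mathcal P_{\nu_q(f)}$; the second equality uses that $\inv_q$ is multiplicative on the graded ring, so $\inv_q(f_iq^i)=\inv_q(f_i)\inv_q(q)^i=\inv_q(f_i)y^i$. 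Since each $\inv_q(f_i)$ lies in $R_q$ by definition of $R_q$ (as $\deg(f_i)<d$), the right-hand side is an element of $R_q[y]$, completing the inclusion $\mathcal G_q\subseteq R_q[y]$.

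The main obstacle — really the only delicate point — is justifying that $\inv_q$ is multiplicative and that the sum of initial forms behaves as claimed, i.e.\ that for $g,h\in\K[x]$ with $\nu_q(g)+\nu_q(h)$ the value of $gh$ one has $\inv_q(gh)=\inv_q(g)\inv_q(h)$, and that when $\nu_q(g+h)=\nu_q(g)=\nu_q(h)$ one has $\inv_q(g+h)=\inv_q(g)+\inv_q(h)$ while these vanish when the value strictly increases. These are exactly the general properties of the graded ring of a valuation, and they hold here precisely because $\nu_q$ is assumed to be a valuation (this is where that hypothesis, stated at the start of the section, is used); if $\nu_q$ failed to be a valuation, $\mathcal G_q$ would still be defined as a graded abelian group but $\inv_q$ would not be multiplicative and the argument would break down. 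So I would either cite the standard reference for graded rings of valuations or spell out these one-line verifications, and then the displayed computation above finishes the proof.
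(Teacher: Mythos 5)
Your proposal is correct and follows essentially the same route as the paper: write the $q$-expansion, observe that the part supported off $S_q(f)$ has strictly larger $\nu_q$-value, and conclude $\inv_q(f)=\sum_{i\in S_q(f)}\inv_q(f_i)\,y^i\in R_q[y]$. The only difference is that you make explicit the routine points the paper leaves implicit (the trivial inclusion $R_q[y]\subseteq\mathcal G_q$ and the additivity/multiplicativity of $\inv_q$, which indeed rest on the standing assumption that $\nu_q$ is a valuation), so no further changes are needed.
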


\begin{proof}
	Take any $f\in \K[x]$ and write its $q$-expansion $f=f_0+f_1q+ \ldots +f_rq^r$, with $f_i\in \K[x]_d\cup \{0\}$ for every $i$, $0\leq i \leq r $. Then
	$$\nu_q\left(f-\sum_{i\in S_q(f)} f_iq^i \right) = \nu_q\left( \sum_{i\not\in S_q(f)} f_iq^i  \right) = \underset{i\not\in S_q(f)}{\min}\{ \nu(f_iq^i)  \}>\nu_q(f).$$
	
	Hence, 
	$$\inv_q(f) = \inv_q\left(\sum_{i\in S_q(f)} f_iq^i \right)=\sum_{i\in S_q(f)} \inv_q(f_i)y^i\in R_q[y]. $$
	Therefore, $\mathcal G_q= R_q[y].$ 
\end{proof}

\begin{Obs}
By Lemma \ref{Lematrasncey} and Lemma \ref{lemAlgGraduadaAnelPoliy} we see that the map
\[
\deg_y:\mathcal G_q\lra \N\cup\{0\}
\]
is well-defined. Moreover, if $f\in \K[x]$, then we see that $\deg_y(\inv_Q(f))=\delta_Q(f)$.
\end{Obs}

\begin{Teo}\label{tehoremkeyplo}
Suppose $\nu_q$ is a valuation on $\K[x]$. Then the following assertions are equivalent.
\begin{description}
\item[(i)] $q$ is a key polynomial for $\nu$.
	
\item[(ii)] For every $f,g\in \K[x]$ with $\deg(f),\deg(g)<\deg(q)$, if $fg=lq+r$ is the $q$-expansion of $fg$, then $\nu(fg)=\nu(r)<\nu(lq)$.
	
\item[(iii)] The set $R_q$ is a subring of $\mathcal G_q$.
	
\item[(iv)] For every $f,g\in \K[x]$ we have $\delta_q(fg)=\delta_q(f)+\delta_q(g)$.
	
\item[(v)] For every $f,g\in \K[x]$, if $\delta_q(f)=0=\delta_q(g)$ then $\delta_q(fg)=0$.

\end{description}
\end{Teo}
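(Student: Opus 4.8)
The plan is to prove the equivalences by establishing a cycle together with a couple of side implications, so that each individual step is short. A natural order is $\textbf{(ii)}\Leftrightarrow\textbf{(i)}$, then $\textbf{(ii)}\Rightarrow\textbf{(iii)}$, then $\textbf{(iii)}\Rightarrow\textbf{(iv)}$, then $\textbf{(iv)}\Rightarrow\textbf{(v)}$, and finally $\textbf{(v)}\Rightarrow\textbf{(ii)}$, closing the loop. The equivalence $\textbf{(i)}\Leftrightarrow\textbf{(ii)}$ is essentially a restatement of Lemma~\ref{lemMQeProdhi}~\textbf{(ii)} (together with its converse): if $q$ is a key polynomial, then $\nu_q=\nu$ need not hold in general, but item~\textbf{(ii)} only concerns products of two factors of degree $<\deg(q)$, and the argument of Lemma~\ref{lemMQeProdhi} applied to $\nu_q$ — combined with the fact that $\nu_q$ and $\nu$ agree on polynomials of degree $<\deg(q)$ — gives $\textbf{(ii)}$. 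For the converse, I would show that $\textbf{(ii)}$ forces $\delta_q$ to be additive on $\K[x]_d$-factors and deduce, via the $\epsilon$-characterization, that $\deg(f)\geq\deg(q)$ whenever $\epsilon(f)\geq\epsilon(q)$; alternatively, one invokes the known characterization of key polynomials in terms of the multiplicative behavior of truncations.

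The middle implications are the bookkeeping core. For $\textbf{(ii)}\Rightarrow\textbf{(iii)}$: $R_q$ is by definition closed under addition, so one only needs closure under multiplication; given $\inv_q(f),\inv_q(g)$ with $f,g\in\K[x]_d$, write $fg=lq+r$; by $\textbf{(ii)}$ we have $\nu_q(fg)=\nu(r)<\nu(lq)$, hence $\inv_q(fg)=\inv_q(r)$ with $r\in\K[x]_d$, so $\inv_q(f)\inv_q(g)\in R_q$. For $\textbf{(iii)}\Rightarrow\textbf{(iv)}$: once $R_q$ is a ring, Lemma~\ref{lemAlgGraduadaAnelPoliy} gives $\mathcal G_q=R_q[y]$ and Lemma~\ref{Lematrasncey} says $y$ is transcendental over $R_q$, so $\mathcal G_q$ is a polynomial ring in $y$ over $R_q$; since $R_q$ is a graded subring with no nontrivial combinations vanishing (again Lemma~\ref{Lematrasncey}), it is a domain, hence $\mathcal G_q$ is a domain in which $\deg_y$ is multiplicative, and by the Remark after Lemma~\ref{lemAlgGraduadaAnelPoliy}, $\deg_y(\inv_q(f))=\delta_q(f)$; therefore $\delta_q(fg)=\deg_y(\inv_q(fg))=\deg_y(\inv_q(f)\inv_q(g))=\delta_q(f)+\delta_q(g)$, taking care of the degenerate case where one of $f,g$ is in $\SU(\nu_q)$ separately. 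The implication $\textbf{(iv)}\Rightarrow\textbf{(v)}$ is immediate by specialization.

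The step I expect to be the main obstacle is $\textbf{(v)}\Rightarrow\textbf{(ii)}$, since it must reconstruct the full multiplicative law from the a priori weak hypothesis that $\delta_q$ has no "zero divisors." The strategy is: take $f,g\in\K[x]_d$, so $\delta_q(f)=\delta_q(g)=0$ trivially (their $q$-expansions have a single term); from $\textbf{(v)}$ we get $\delta_q(fg)=0$, i.e., if $fg=lq+r$ then $\nu_q(fg)=\nu(r)$ and $i=0$ is the largest index in $S_q(fg)$, which forces $\nu(r)<\nu(lq)$ (the case $\nu(r)=\nu(lq)$ would put $1\in S_q(fg)$, contradicting $\delta_q(fg)=0$, unless $lq=0$, which is the trivial subcase). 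Since $\nu_q$ is a valuation, $\nu(fg)=\nu_q(fg)=\nu(r)$, giving exactly $\textbf{(ii)}$. The subtlety to handle carefully is that $\textbf{(v)}$ as stated quantifies over all $f,g$, not just those of degree $<\deg(q)$; but one only needs the instance with $f,g\in\K[x]_d$, where $\delta_q(f)=\delta_q(g)=0$ holds automatically, so no extra work is required there. With $\textbf{(ii)}$ in hand, the cycle closes and all five statements are equivalent.
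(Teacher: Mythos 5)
Your overall architecture coincides with the paper's: it proves the same cycle, citing Lemma 2.3~\textbf{(iii)} of \cite{josneiKeyPolyPropriedades} for \textbf{(i)}$\Rightarrow$\textbf{(ii)} and Corollary 3.52 of \cite{leloup} for \textbf{(ii)}$\Rightarrow$\textbf{(i)}, then running \textbf{(iii)}$\Rightarrow$\textbf{(iv)}$\Rightarrow$\textbf{(v)}$\Rightarrow$\textbf{(ii)} exactly as you do (in particular your \textbf{(v)}$\Rightarrow$\textbf{(ii)} argument via $S_q(fg)=\{0\}$ is the paper's). Two remarks. First, a point in your favour: you prove \textbf{(ii)}$\Rightarrow$\textbf{(iii)} directly (since $\deg(l)<\deg(q)$ one gets $\nu_q(lq)=\nu(lq)>\nu(r)$, hence $\inv_q(f)\inv_q(g)=\inv_q(fg)=\inv_q(r)\in R_q$, using that $\nu_q$ is a valuation), whereas the paper only cites Proposition 4.5 of \cite{josneimonomial}; by contrast, your first sketched route for \textbf{(ii)}$\Rightarrow$\textbf{(i)} via the $\epsilon$-characterization is not carried out, so in effect you, like the paper, lean on the external characterization for that implication. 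Second, a genuine local error in \textbf{(iii)}$\Rightarrow$\textbf{(iv)}: you justify that $R_q$ is a domain by Lemma~\ref{Lematrasncey}, but that lemma only asserts the $R_q$-linear independence of the powers of $y$ and says nothing about zero divisors inside $R_q$ itself. The correct (and the paper's) reason is that $\mathcal G_q$ is a domain because $\nu_q$ is a valuation ($\inv_q(f)\inv_q(g)=\inv_q(fg)\neq 0$ for $f,g\notin\SU(\nu_q)$), and once \textbf{(iii)} holds $R_q$ is a subring of this domain; with that one-line fix, $\deg_y$ is additive on products in $R_q[y]$ and your computation $\delta_q(fg)=\deg_y(\inv_q(f)\inv_q(g))=\delta_q(f)+\delta_q(g)$ goes through as in the paper.
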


\begin{proof}

That \textbf{(i)} implies \textbf{(ii)} is proved in Lemma 2.3 \textbf{(iii)} of \cite{josneiKeyPolyPropriedades}. On the other hand, Corollary 3.52 of \cite{leloup} shows that \textbf{(ii)} implies \textbf{(i)}. It is well-known that \textbf{(ii)} implies \textbf{(iii)} (see Proposition 4.5 of \cite{josneimonomial}, for instance).

Assume that $R_q$ is a subring of $\mathcal G_q$. Since $\mathcal G_q$ is a domain so is $R_q$. Consequently, for $a,b\in \mathcal G_q$ we have $\deg_y(ab)=\deg_y(a)+\deg_y(b)$. Hence, for $f,g\in\K[x]$ we have
\[
\delta_q(fg)=\deg_y(\inv_q(fg))=\deg_y(\inv_q(f))+\deg_y(\inv_q(g))=\delta_q(f)+\delta_q(g).
\]

The assertion \textbf{(v)} follows immediately from \textbf{(iv)}. Assume that \textbf{(v)} is satisfied and take $f,g\in\K[x]$ with $\deg(f),\deg(g)<\deg(q)$. In particular, we have $\delta_q(f)=\delta_q(g)=0$. By our assumption, we have $\delta_q(fg)=0$. This implies that, if
\[
fg=lq+r\mbox{ is the }q\mbox{-expansion of }fg,
\]
then $\nu(fg)=\nu_q(fg)=\nu(r)<\nu(lq)$ (because $S_q(fg)=\{0\}$). This concludes the proof.
\end{proof}

In Theorem 5.3 of \cite{Andrei}, an alternative proof of Theorem \ref{teorandretromeno} is presented. This proof uses the structure of the graded algebra associated to the truncation in key polynomials. We present a brief comparison between these two proofs.

We assume that we are in the situation \eqref{sit}. Then, by Theorem \ref{tehoremkeyplo} and Lemmas \ref{Lematrasncey} and \ref{lemAlgGraduadaAnelPoliy} we have that
\begin{equation}\label{equatalgngrad}
\mathcal G_q=R_Q[y]\mbox{ and }\mathcal G_{x-a}=\left({\rm gr}_\mu\overline \K\right)[z]
\end{equation}
where $y=\inv_Q(Q)$ and $z=\inv_{x-a}(x-a)$. Since $\nu_Q(f)\leq \mu_{x-a}(f)$ for every $f\in \K[x]$ there exists a natural homomorphism
\[
\Phi:\mathcal G_q\lra \mathcal G_{x-a}
\]
given by
\begin{displaymath}
\Phi(\inv_Q(f))=\left\{\begin{array}{cl}
\inv_{x-a}(f)&\mbox{ if }\nu_Q(f)=\mu_{x-a}(f)\\
0&\mbox{ if }\nu_Q(f)<\mu_{x-a}(f)
\end{array}\right..
\end{displaymath}
Hence, in order to show Theorem \ref{teorandretromeno}, it is enough to show that $\Phi$ is injective.

They show that $\Phi(R_Q)\subseteq {\rm gr}_\mu\overline \K$ and that $\Phi|_{R_Q}$ is injective. This result is equivalent to our Lemma \ref{lemMuXmenosaQigualMuQQ}. Then, it is shown that $z\mid \inv_{x-a}(Q)$ (this result is equivalent to our Proposition \ref{lemRMuXmenosAtrans}). Finally, they use the characterization \eqref{equatalgngrad} and the fact that $\deg_z(ab)=\deg_z(a)+\deg_z(b)$ to conclude that $\Phi$ is injective.

\pagebreak

\noindent{\footnotesize JOSNEI NOVACOSKI\\
Departamento de Matem\'atica--UFSCar\\
Rodovia Washington Lu\'is, 235\\
13565-905, S\~ao Carlos - SP, Brazil.\\
Email: {\tt josnei@ufscar.br} \\\\

\noindent{\footnotesize CAIO HENRIQUE SILVA DE SOUZA\\
Departamento de Matem\'atica--UFSCar\\
Rodovia Washington Lu\'is, 235\\
13565-905, S\~ao Carlos - SP, Brazil.\\
Email: {\tt caiohss@estudante.ufscar.br} \\\\


\begin{thebibliography}{99}
\bibitem{popescu2} V. Alexandru, N. Popescu, A. Zaharescu, A theorem of characterization of residual transcendental extensions of a valuation, Kyoto J. Math. 28 (1988), no. 4, 579-592.

\bibitem{popescu3}  V. Alexandru, N. Popescu, A. Zaharescu, Minimal pairs of definition of a residual transcendental extension of a valuation, Kyoto J. Math. 30 (1990), no. 2,  207-225.

\bibitem{popescu4} V. Alexandru, N. Popescu, A. Zaharescu, All valuations on $K(x)$, Kyoto J. Math. 30 (1990), no. 2, 281-296.
%
\bibitem{Andrei} A. Bengus-Lasnier, Minimal Pairs, Truncation and Diskoids, J. Algebra. 579 (2021), 388-427.

\bibitem{leloup} G. Leloup, Key polynomials, separate and immediate valuations, and simple extensions of valued fields,	arXiv:1809.07092, 2019.

\bibitem{Nart1} E. Nart, Key polynomials over valued fields, Publ. Mat. 64 (2020), 3-42.

\bibitem{Nart2} E. Nart, MacLane-Vaqui\'e chains of valuations on a polynomial ring, Pacific J. Math., 311 (2021), 165--195.

\bibitem{josneiKeyPolyPropriedades}  J. Novacoski, M. Spivakovsky, Key polynomials and pseudo-convergent sequences, J. Algebra, 495 (2018), 199-219.

\bibitem{josneiKeyPolyMinimalPairs} J. Novacoski, Key polynomials and minimal pairs, J. Algebra, 523 (2019), 1-14.

\bibitem{josneimonomial}  J. Novacoski, On MacLane-Vaqui\'e key polynomials, J. Pure Appl. Algebra, 225 (2021), 1-20.

\bibitem{popescuSurLaDefinition} L. Popescu, N. Popescu, Sur la definition des prolongements r\'esiduels transcendents d`une valuation sur un corps $K$ \`a $K(x)$, Bull. Math. Soc. Sci. Math. Roumanie (N.S.), 33 (81)  (1989), no. 3, 257-264.

\end{thebibliography}
\end{document}